\titleformat{\subsection}[runin]
{\bfseries} {\thesubsection{.}}{0.15cm}{}[.]
\titleformat{\subsubsection}[runin]
{\em}{\thesubsubsection{.}}{0.15cm}{}[.]
\newtheorem{theorem}{Theorem}[section]
\newtheorem{proposition}[theorem]{Proposition}
\newtheorem{lemma}[theorem]{Lemma}
\newtheorem{claim}[theorem]{Claim}
\theoremstyle{definition}
\newtheorem{definition}[theorem]{Definition}
\newtheorem{remark}[theorem]{Remark}
\numberwithin{equation}{section}
\numberwithin{figure}{section}
\newcommand\Acal{\mathcal{A}}
\newcommand\Ccal{\mathcal{C}}
\newcommand\Ocal{\mathcal{O}}
\newcommand\Pcal{\mathcal{P}}
\newcommand\Scal{\mathcal{S}}
\newcommand\C{\mathbb{C}}
\renewcommand\P{\mathbb{P}}
\newcommand\R{\mathbb{R}}
\renewcommand\b{\mathbb{B}}
\renewcommand\c{\mathbb{C}}
\newcommand\cp{\mathbb{CP}}
\renewcommand\d{\mathbb D}
\newcommand\n{\mathbb{N}}
\renewcommand\r{\mathbb{R}}
\newcommand\s{\mathbb{S}}
\newcommand\z{\mathbb{Z}}
\newcommand\igot{\mathfrak{i}}
\renewcommand\igot{\mathfrak{i}}
\newcommand\pgot{\mathfrak{p}}
\newcommand\sgot{\mathfrak{s}}
\newcommand\Agot{\mathfrak{A}}
\newcommand\Sgot{\mathfrak{S}}
\renewcommand\imath{\igot}
\newcommand\wt{\widetilde}
\newcommand\di{\partial}
\newcommand\dist{\mathrm{dist}}
\newcommand\Div{\mathrm{Div}}
\newcommand\Flux{\mathrm{Flux}}
\newcommand\GCMI{\mathrm{GCCMI}}
\newcommand\CMI{\mathrm{CCMI}}
\newcommand\supp{\mathrm{supp}}
\newcommand\Qcal{\mathcal{Q}}
\def\dist{\mathrm{dist}}
\def\Flux{\mathrm{Flux}}
\begin{document}


\fancyhead[LO]{Algebraic approximation and the Mittag-Leffler theorem}
\fancyhead[RE]{A.\ Alarc\'on and F.J. L\'opez}
\fancyhead[RO,LE]{\thepage}

\thispagestyle{empty}



\begin{center}
{\bf\LARGE Algebraic approximation and the Mittag-Leffler theorem for minimal surfaces}

\vspace*{5mm}

%
%
{\large\bf Antonio Alarc\'on\quad and\quad Francisco J. L\'opez}
\end{center}

%
%
\vspace*{5mm}

\begin{quote}
{\small
\noindent {\bf Abstract}\hspace*{0.1cm} 
In this paper, we prove a uniform approximation theorem with interpolation for complete conformal minimal surfaces with finite total curvature in the Euclidean space $\r^n$ $(n\ge 3)$. 
As application, we obtain a Mittag-Leffler type theorem for complete conformal minimal immersions $M\to\r^n$ on any open Riemann surface $M$.

\smallskip

\noindent{\bf Keywords}\hspace*{0.1cm} 
Minimal surface, Riemann surface, meromorphic function.

\smallskip

\noindent{\bf Mathematics Subject Classification (2010)}\hspace*{0.1cm} 53A10, 53C42, 30D30, 32E30.

}
\end{quote}



\section{Introduction}\label{sec:intro}

Holomorphic approximation and interpolation is a fundamental subject in complex analysis that plays an important role in several fields of Mathematics. In particular, it has been fundamental by means of the classical Enneper-Weiertrass representation formula in the development of the theories of approximation and interpolation for conformal minimal surfaces in the Euclidean space $\r^n$; a classical field of geometry. We refer to \cite{AlarconForstneric2019JAMS} for a survey of recent results in this subject.  

We start by recalling the following two seminal theorems in complex analysis from the late 19th Century.
\begin{enumerate}[A]

\item[$\bullet$] Runge's theorem (1885): if $K\subset \c$ is a compact set such that $\c\setminus K$ is connected, then every holomorphic function on a neighborhood of $K$ can be approximated uniformly on $K$ by entire polynomials \cite{Runge1885AM}. Mergelyan's theorem from 1951 ensures that it suffices to ask the function to be continuous on $K$ and holomorphic on the interior $\mathring K$ of $K$ \cite{Mergelyan1951DAN}.

\smallskip
\item[$\bullet$] Mittag-Leffler's theorem (1884): if $A\subset\c$ is a closed discrete subset and if $f$ is a meromorphic function on a neighborhood of $A$, then there is a meromorphic function $\tilde f$ on $\c$ such that $\tilde f$ is holomorphic on $\c\setminus A$ and $\tilde f-f$ is holomorphic at every point of $A$ \cite{Mittag-Leffler1884AM}. This a sort of dual to the Weierstrass theorem from 1876 ensuring that for any 
map $r:A\to\n$ there is an entire function having a zero of order $r(a)$ at each point $a\in A$ and vanishing nowhere else \cite{Weierstrass1876}. 
\end{enumerate}

The aim of this paper is to provide analogues of these results in the global theory of minimal surfaces in $\r^n$ for $n\ge 3$ (see Theorems \ref{th:intro1} and \ref{th:intro2}).

The aforementioned theorems admit several generalizations in complex analysis and algebraic geometry; we refer to the survey of Forn\ae ss, Forstneri\v c, and Wold \cite{FornaessForstnericWold2018} for a review of this classical but still very active 
subject.  
Concerning meromorphic functions on compact Riemann surfaces, we recall the following extension of Runge's theorem, including interpolation, which dates back to the early decades of modern Riemann surface theory (in this paper we use multiplicative notation for divisors). 
 %
 %
\begin{theorem}[Behnke-Stein \cite{BehnkeStein1949}, Royden \cite{Royden1967JAM}]\label{th:BSR}
Let $E$ be a nonempty finite set in a compact Riemann surface $\Sigma$. If $K\subset\Sigma\setminus E$ is a Runge compact subset\footnote{A compact subset $K$ of an open Riemann surface $M$ is said to be {\em Runge} (or {\em holomorphically convex}) in $M$ 
if the complement $M\setminus K$ has no relatively compact connected components.}, if $f$ is a meromorphic function on a neighborhood of $K$, and if $D$ is a finite divisor with the support in $K$, then for any $\epsilon>0$ there is a meromorphic function $\wt f$ on $\Sigma$ such that $\wt f$ is holomorphic on $\Sigma\setminus E$ except for the poles of $f$ in $K$, 
$|\wt f-f|<\epsilon$ on $K$, and the divisor of $\wt f-f$ is a multiple of $D$ in a neighborhood of $K$.
\end{theorem}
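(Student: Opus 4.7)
The natural strategy is to recast the statement in terms of the open Riemann surface $M := \Sigma \setminus E$, which is well defined since $E$ is nonempty and finite, and then reduce to the classical approximation-interpolation theorem for open Riemann surfaces. By hypothesis, $K \subset M$ is a Runge compact subset of $M$.

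\textbf{Step 1 (isolating the polar part of $f$).} I would first subtract off the poles of $f$ in $K$. Let $p_1,\dots,p_m \in K$ be these poles with principal parts $P_1,\dots,P_m$. Solving the Mittag-Leffler problem on the open Riemann surface $M$ (which is always solvable, e.g.\ as an easy consequence of Behnke-Stein), I would produce a meromorphic function $g$ on $\Sigma$, holomorphic on $\Sigma\setminus(E\cup\{p_1,\dots,p_m\})$, and with principal part $P_j$ at each $p_j$. Then $h := f - g$ is holomorphic on a neighborhood of $K$.

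\textbf{Step 2 (approximating and interpolating $h$ on $M$).} The core step is to produce a holomorphic function $\tilde h$ on $M$ such that $|\tilde h - h|<\epsilon$ on $K$ and such that $\tilde h - h$ vanishes to order at least $D(p)$ at each $p \in \supp D$. This is the classical Runge-Behnke-Stein approximation theorem on open Riemann surfaces enhanced with finite interpolation. I would implement it by a standard two-step argument: first apply the pure-approximation version of Behnke-Stein to obtain a holomorphic approximation $h_0$ of $h$ on $M$; then correct the jets of the error $h_0-h$ at the points of $\supp D$ by subtracting a suitable linear combination of a fixed finite collection of holomorphic functions on $M$ whose jets at $\supp D$ realize prescribed values (the existence of such a family follows again from Behnke-Stein together with Florack's solvability of the Weierstrass divisor problem on $M$). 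Since the coefficients of this correction are controlled by the size of the error on $K$, one obtains the desired simultaneous approximation and interpolation.

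\textbf{Step 3 (conclusion).} Set $\tilde f := g + \tilde h$. Then $\tilde f$ is meromorphic on $\Sigma$, holomorphic on $\Sigma\setminus E$ except at $p_1,\dots,p_m$ where its principal part agrees with that of $f$; moreover $\tilde f - f = \tilde h - h$ satisfies $|\tilde f - f|<\epsilon$ on $K$ and has divisor a multiple of $D$ on a neighborhood of $K$, as required. The main obstacle is Step 2 (the combined approximation and interpolation on the open Riemann surface $M$); the remaining steps are essentially bookkeeping that exploits the fact that $E\neq\emptyset$ makes $M$ open and puts Behnke-Stein/Florack machinery at our disposal.
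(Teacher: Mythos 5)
The paper does not prove Theorem~\ref{th:BSR}; it is invoked as a classical result and cited to Behnke--Stein and Royden. Your proposal attempts an actual proof, and it has a genuine gap at its core.

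The heart of Theorem~\ref{th:BSR} is not the approximation with interpolation on the open surface $M=\Sigma\setminus E$ (that part is Behnke--Stein/Bishop, and your Step~2 describes it adequately); it is the upgrade, due to Royden, from ``holomorphic on $M$'' to ``meromorphic on the compact surface $\Sigma$ with poles only in $E$''. Your Step~2 yields $\wt h\in\Ocal(M)$, but a holomorphic function on $\Sigma\setminus E$ need not extend meromorphically across $E$ --- it can have an essential singularity at each point of $E$. Consequently $\wt f = g + \wt h$ is only meromorphic on $M$, which is strictly weaker than the conclusion of the theorem; your Step~3 silently assumes $\wt h$ is meromorphic on $\Sigma$, which has not been established. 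This is precisely the content that Royden adds over Behnke--Stein, and it requires a separate duality/Runge-type argument showing that the ring $\Ocal_\infty(\Sigma\,|\,E)$ is dense in $\Ocal(M)$ on Runge compacta (or an equivalent route via exhaustion by finite-sheeted function fields). Without it, the plan does not prove the stated theorem.

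There is also a smaller slip in Step~1: solving the Mittag--Leffler problem on the open surface $M$ produces $g$ meromorphic on $M$, not automatically meromorphic on $\Sigma$. To get $g\in\Ocal_\infty(\Sigma)$ with the prescribed principal parts at $p_1,\dots,p_m$ and all remaining poles in $E$, one should instead argue directly on $\Sigma$ using Riemann--Roch (allowing pole order at $E$ as high as necessary), rather than appeal to the open-surface Mittag--Leffler theorem. This is fixable but, as written, the justification is wrong for the same reason as the main gap: functions on $M$ do not automatically control behaviour at $E$.
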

%
%

The natural counterpart of meromorphic functions in minimal surface theory are complete minimal surfaces with finite total curvature (we refer e.g.\ to \cite{ChernOsserman1967JAM,Osserman1986,BarbosaColares1986LNM,Yang1994MA} for background on these surfaces). Indeed, if $X\colon M\to \r^n$  is a complete conformal minimal immersion with finite total curvature from an open Riemann surface $M$, then $M$ is biholomorphic to $\Sigma\setminus E$ where $\Sigma$ and $E$ are as in Theorem \ref{th:BSR}. Moreover, the exterior derivative $d X$ of $X\colon\Sigma\setminus E\to\r^n$ (which coincides with its $(1,0)$-part $\di X$ since $X$ is harmonic) is holomorphic and extends meromorphically to $\Sigma$ with an effective pole at each point of $E$ (see \cite{Huber1957CMH,ChernOsserman1967JAM} or \cite{Osserman1986}). These surfaces are since the early works by Osserman in the 1960s a major focus of interest in the global theory of minimal surfaces.

%
%
The following analogue for conformal minimal surfaces in $\R^n$ $(n\ge 3)$  of the Behnke-Stein-Royden theorem is a simplified version of our main result (see Theorem \ref{th:MT} for a more precise statement including Mergelyan approximation and control of the flux).
\begin{theorem}[Runge's theorem for complete minimal surfaces with finite total curvature]\label{th:intro1} 
Let $\Sigma$ be a compact Riemann surface and $\varnothing\neq E\subset \Sigma$ be a finite subset. Also let $K\subset \Sigma\setminus E$ be a smoothly bounded, Runge compact domain  and let $E_0$ and $\Lambda$ be a  pair of disjoint (possibly empty)  finite  sets in $\mathring K$.
If $X\colon K\setminus E_0\to \r^n$ $(n\geq 3)$ is a complete conformal minimal immersion with finite total curvature, then for any $\epsilon>0$ and any integer $r\ge 0$ there is a conformal minimal immersion $Y\colon \Sigma\setminus (E\cup E_0)\to \r^n$ satisfying the following conditions.  
\begin{enumerate}[\rm (i)]
\item $Y$ is complete and has finite total curvature.
\smallskip
\item $Y-X$ extends  harmonically to $K$ and  $|Y-X|<\epsilon$ on $K$. 
\smallskip
\item $Y-X$ vanishes at least to order $r$ at every point of $E_0\cup \Lambda$.
\end{enumerate}
\end{theorem}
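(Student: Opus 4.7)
I will work with the $(1,0)$-differential $\Phi=\di X$, which is a $\c^n$-valued meromorphic $1$-form on $K$, holomorphic on $K\setminus E_0$, with an effective pole at each point of $E_0$ (accounting for the complete finite-total-curvature ends), satisfying the isotropy relation $\sum_{j=1}^n \phi_j^2=0$ together with the nondegeneracy $\sum_j|\phi_j|^2>0$, and having vanishing real periods along every loop in $K\setminus E_0$. The task then reduces to constructing an analogous $\c^n$-valued meromorphic $1$-form $\wt\Phi$ on $\Sigma$, holomorphic on $\Sigma\setminus(E\cup E_0)$, with an effective pole at every point of $E\cup E_0$, isotropic, nondegenerate, with vanishing real periods, uniformly close to $\Phi$ on $K$, and agreeing with $\Phi$ to order $r$ at each point of $E_0\cup\Lambda$; integrating and normalizing at a base point in $K$ then produces the desired $Y$.

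\textbf{Period-dominating isotropic spray.} A direct componentwise application of Theorem~\ref{th:BSR} to the $\phi_j$ cannot preserve the quadratic constraint $\sum_j\phi_j^2=0$. Instead, I plan to construct a period-dominating holomorphic spray $\{\Phi_w\}_{w\in B}$ of $\c^n$-valued meromorphic isotropic $1$-forms on a Runge neighborhood of $K$, parametrized by a ball $B\subset\c^N$ around $0$, with $\Phi_0=\Phi$, such that every $\Phi_w$ shares the principal part of $\Phi$ at each point of $E_0$, the difference $\Phi_w-\Phi$ vanishes to order $\ge r$ on $E_0\cup\Lambda$, and the period map
\[
P\colon w\longmapsto \Bigl(\Re\oint_\gamma\Phi_w\Bigr)_\gamma \in \r^{b_1(\Sigma\setminus(E\cup E_0))}
\]
is submersive at $w=0$. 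Such sprays can be arranged in the form $\Phi_w=\exp\bigl(\sum_i w_i\, h_i M_i\bigr)\cdot\Phi$, where $M_i\in\mathfrak{so}(n,\c)$ preserves the isotropy cone $\{\sum z_j^2=0\}\subset\c^n$ and the meromorphic functions $h_i$ on $K$ are chosen to vanish to order $r$ on $E_0\cup\Lambda$ while generating the full period cohomology (this is a standard linear-algebra argument once sufficiently many localized $h_i$ are available, which follows from Runge approximation on $K$).

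\textbf{Approximation, period correction, and main obstacle.} Applying Theorem~\ref{th:BSR} to the finitely many meromorphic functions $h_i$ and to the components of $\Phi$ (the latter with target divisor prescribing an effective pole at each point of $E$) yields an approximating spray $\wt\Phi_w$ of $\c^n$-valued meromorphic isotropic $1$-forms on $\Sigma$, holomorphic off $E\cup E_0$, matching $\Phi_w$ to order $r$ on $E_0\cup\Lambda$, uniformly close to $\Phi_w$ on $K$, and with effective poles at $E\cup E_0$ guaranteeing completeness and finite total curvature of the new ends. By stability of submersions under small perturbations, the period map for $\wt\Phi_w$ remains submersive near $w=0$, so the implicit function theorem produces $w^*$ near $0$ with $P(w^*)=0$; then $\wt\Phi:=\wt\Phi_{w^*}$ integrates to the desired $Y$. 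The main obstacle is the construction of the spray: it must lie entirely in the isotropy variety, vanish to high order at the interior puncture set $E_0$ (where $\Phi$ itself has poles) as well as at $\Lambda$, and simultaneously be rich enough that its period map surjects onto the full real period group of $\Sigma\setminus(E\cup E_0)$. The interpolation at $E_0$ is especially delicate, since the perturbations must preserve the singular pole structure of $\Phi$ while vanishing there to order $r$; this dictates a careful choice of the functions $h_i$ and of the subsidiary divisor used when invoking Theorem~\ref{th:BSR}.
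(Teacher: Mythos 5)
Your overall architecture (Weierstrass data, period\hbox{-}dominating spray, BSR approximation on $\Sigma$, implicit function theorem to kill periods) matches the paper's philosophy, and your $\mathfrak{so}(n,\c)$-generated multiplicative sprays are indeed a standard and workable device for the \emph{period} part (the paper also uses multiplicative sprays, though parametrised on the quadric $\Sgot_*^{n-1}=\{u_1u_2=\sum_{j\ge3}u_j^2\}$ rather than on $\mathfrak{so}(n,\c)$). However, there are two genuine gaps in the step you summarise as ``Applying Theorem~\ref{th:BSR} to the finitely many meromorphic functions $h_i$ and to the components of $\Phi$ \ldots yields an approximating spray $\wt\Phi_w$ of isotropic $1$-forms on $\Sigma$.''

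First, Theorem~\ref{th:BSR} is a scalar statement. Applying it componentwise to $\phi_1,\ldots,\phi_n$ produces $\wt\phi_1,\ldots,\wt\phi_n\in\Omega_\infty(\Sigma|E\cup E_0)$ with $\sum_j\wt\phi_j^2$ \emph{close to} $0$ on $K$, but not identically $0$: the quadratic constraint is not preserved by independent scalar approximation, and your $\mathfrak{so}(n,\c)$ spray only rotates a fixed core and so does not repair a core which has already fallen off the isotropy cone. This is exactly the obstruction the paper circumvents by passing to $\Sgot_*^{n-1}$: there one approximates $u_1$ and $(u_3,\ldots,u_n)$ by meromorphic data on $\Sigma$ and then \emph{defines} $\hat u_2:=(\sum_{j\ge3}\hat u_j^2)/\hat u_1$, so the constraint holds identically on $\Sigma$; the entire delicacy of Section~\ref{sec:Runge} (conditions (C1)--(C3), (D1)--(D3), (E1)--(E4)) is devoted to ensuring that this quotient is again meromorphic with poles only in $E\cup E_0$ and no unwanted zeros. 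Your sketch does not contain a mechanism achieving the same, and without one the construction does not close.

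Second, even granting an isotropic extension, you must ensure that $\wt\Phi$ is zero-free on $\Sigma\setminus(E\cup E_0)$: a zero outside $K$ is a branch point of $Y$. Theorem~\ref{th:BSR} as stated gives no control over the zeros of $\wt f$ away from $K$, so generic applications of it will introduce branch points. The paper's Proposition~\ref{pro:Royden0} is precisely a refinement of Behnke--Stein--Royden which forces the divisor of $\wt f$ off $K\cup E$ to be a \emph{square} $D_0^2$; this, together with Hurwitz's theorem, is what lets the authors cancel the extraneous zeros pairwise (and is also what makes the case $n=3$ go through, where the map $(u_3,\ldots,u_n)$ reduces to a single function and the freedom to absorb zeros is minimal). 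Your plan contains no analogue of this control.

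A smaller, fixable omission: you implicitly assume that $\Phi$ already carries all the homology of $\Sigma\setminus(E\cup E_0)$, i.e.\ that $K$ is a deformation retract of $\Sigma\setminus E$. In general it is not, and one first has to extend $X$ from $K$ to a larger admissible set $S'$ which \emph{is} a deformation retract (Claim~\ref{cl:conazo}), and then replace the generalized immersion on $S'$ by a genuine, full one via the Mergelyan Theorem~\ref{th:Merg}. Fullness of the core is needed for your period-dominating spray to have a chance of being submersive onto the full period group, so this reduction is not merely cosmetic.
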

Since $X\colon K\setminus E_0\to\r^n$ is complete and has finite total curvature, it is a proper map (see \cite{JorgeMeeks1983T}), and hence $\lim_{p\to E_0}|X(p)|=+\infty$. Likewise, $Y\colon \Sigma\setminus (E\cup E_0)\to \r^n$ is also proper by condition {\rm (i)}; we emphasize that, in view of {\rm (ii)} and {\rm (iii)}, we have that $\lim_{p\to E_0}|Y(p)-X(p)|=0$. 

Theorem \ref{th:intro1} is known in the particular case when $n=3$ and either $\Lambda=\varnothing$ (see \cite{Lopez2014TAMS}) or $E_0=\varnothing$  (see \cite{AlarconCastro-InfantesLopez2019CVPDE}). The methods in \cite{Lopez2014TAMS,AlarconCastro-InfantesLopez2019CVPDE} rely strongly on the spinor representation formula for minimal surfaces in $\r^3$, a tool that is no longer available in higher dimensions. We point out that our proof works in arbitrary dimension, also for $n=3$. Theorem \ref{th:intro1} is the first known approximation or interpolation result by complete minimal surfaces with finite total curvature in $\R^n$ for $n>3$. 

%

We now pass to consider holomorphic functions on arbitrary open Riemann surfaces. In 1948 Florack \cite{Florack1948}, by building on the methods developed by Behnke and Stein in \cite{BehnkeStein1949}, provided analogues to the Mittag-Leffler and the Weierstrass theorems in this more general framework.  Likewise, in 1958 Bishop \cite{Bishop1958PJM} extended the Runge-Mergelyan theorem to any open Riemann surface; in this case the approximation takes place in Runge compact subsets 
(see \cite[Theorems 3.8.1 and 5.4.4]{Forstneric2017} for a general statement including interpolation). 
In this direction and as application of Theorem \ref{th:intro1}, we also obtain in this paper the following analogue for minimal surfaces of the Mittag-Leffler theorem that also includes approximation of Runge-Bishop type with interpolation (see Theorem \ref{th:M-Lgen} for a more precise statement).
%
%
\begin{theorem}[Mittag-Leffler's theorem for minimal surfaces]\label{th:intro2}
Let $M$ be an open Riemann surface, $A\subset M$ be a closed discrete subset, and $U\subset M$ be a locally connected, smoothly bounded closed neighborhood of $A$ whose connected components are all Runge compact sets. If $X\colon U\setminus A\to\r^n$ $(n\ge 3)$ is a complete conformal minimal immersion whose restriction to each connected component of $U\setminus A$ has finite total curvature, then there exists a complete conformal minimal immersion $Y\colon M\setminus A\to\r^n$ such that the map $Y-X$ is harmonic at every point of $A$.

Furthermore, given $\epsilon>0$, a closed discrete subset $\Lambda$ of $M$ with $\Lambda\subset \mathring U\setminus  A$, and a map $r:A\cup \Lambda\to\n$, the immersion $Y$ can be chosen such that $|Y-X|<\epsilon$ on $U$ and $Y-X$ vanishes at least to order $r(p)$ at each point $p\in A\cup\Lambda$.
\end{theorem}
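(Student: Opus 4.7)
The plan is to apply Theorem~\ref{th:intro1} iteratively along a suitable exhaustion of $M$, producing a sequence of complete conformal minimal immersions with finite total curvature whose uniform-on-compacta limit on $M\setminus A$ is~$Y$. Since $A$ is closed and discrete and the family of connected components of $U$ is locally finite in $M$ (this follows from $U$ being closed and locally connected, so that its components are open in $U$), one can arrange a standard exhaustion of $M$ by smoothly bounded Runge compact domains $K_0\subset K_1\subset K_2\subset\cdots$ with $K_j\Subset\mathring K_{j+1}$, $\bigcup_j K_j=M$, $\partial K_j\cap(A\cup\Lambda)=\varnothing$, and with every component of $U$ that meets $K_j$ entirely contained in $\mathring K_j$. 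Set $A_j=A\cap K_j$ and $\Lambda_j=\Lambda\cap K_j$ (both finite), and let $U_j$ denote the (finite) union of components of $U$ meeting $K_j$. Each $K_j$ embeds into a compact Riemann surface $\Sigma_j$ obtained by capping off every boundary circle of $K_j$ with a closed disk; picking one interior point in each such disk yields a finite set $E^{\mathrm{end}}_j\subset\Sigma_j$ such that $K_{j-1}$ is a smoothly bounded Runge compact domain in $\Sigma_j\setminus E^{\mathrm{end}}_j$. Finally fix $\epsilon_j>0$ with $\sum_j\epsilon_j<\epsilon$ and integers $r_j\nearrow\infty$ with $r_j\ge\max_{p\in A_j\cup\Lambda_j}r(p)$.

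I will build, inductively on $j$, complete conformal minimal immersions with finite total curvature $Y_j\colon K_j\setminus A_j\to\R^n$ such that (i) $|Y_j-X|<\epsilon(1-2^{-j})$ on $U_j$, (ii) $Y_j-X$ vanishes to order at least $r_j$ at every point of $A_j\cup\Lambda_j$ and hence extends harmonically across $A_j$, (iii) $|Y_j-Y_{j-1}|<\epsilon_j$ on $K_{j-1}$ with $Y_j-Y_{j-1}$ vanishing to order $r_j$ at every point of $A_{j-1}\cup\Lambda_{j-1}$, and (iv) the $Y_j$-intrinsic distance between $\partial K_{j-1}$ and $\partial K_j$ exceeds~$1$. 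For the step $j\to j+1$, set $\{p_1,\dots,p_k\}=A_{j+1}\setminus A_j\subset\mathring K_{j+1}\setminus K_j$; by the choice of exhaustion, the components $U_{\alpha(p_i)}$ of $U$ containing the $p_i$ are disjoint compact subsets of $\mathring K_{j+1}\setminus K_j$. Choose smooth arcs $\gamma_i\subset\mathring K_{j+1}\setminus(A\cup\Lambda_{j+1})$ joining $\partial K_j$ to $\partial U_{\alpha(p_i)}$, and let $K_j^*$ be a thin smoothly bounded Runge compact thickening of $K_j\cup\bigcup_i(\gamma_i\cup U_{\alpha(p_i)})$ inside $\mathring K_{j+1}$. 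A Mergelyan-type construction for conformal minimal immersions on admissible sets then produces a complete CMI with FTC $Z\colon K_j^*\setminus A_{j+1}\to\R^n$ that is arbitrarily close to $Y_j$ on $K_j$ and to $X$ on each $U_{\alpha(p_i)}$, with the required high-order vanishing of $Z-Y_j$ at $A_j\cup\Lambda_j$ and of $Z-X$ at each $p_i$. Applying Theorem~\ref{th:intro1} to $Z$ with $\Sigma=\Sigma_{j+1}$, $E=E^{\mathrm{end}}_{j+1}$, $K=K_j^*$, $E_0=A_{j+1}$, $\Lambda=\Lambda_j\cup\{p_1,\dots,p_k\}$, sufficiently small tolerance, and sufficiently large interpolation order, yields the next $Y_{j+1}$ satisfying (i)--(iv). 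The base case $j=0$ is the same construction starting from $X$ on $U_0\setminus A_0$.

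By (iii), $(Y_j)$ is uniformly Cauchy on every compact subset of $M\setminus A$ and on $U$, so $Y=\lim_j Y_j$ exists. Standard convergence, combined with the derivative control coming from (ii)--(iii), shows that $Y$ is a conformal minimal immersion of $M\setminus A$ into $\R^n$ with $Y-X$ harmonic across $A$, $|Y-X|<\epsilon$ on $U$, and $Y-X$ vanishing to order $r(p)$ at each $p\in A\cup\Lambda$. Completeness of $Y$ at each $a\in A$ is automatic since $Y-X$ is bounded near $a$ while $X$ is already complete there; completeness at the remaining ends of $M$ follows from (iv), provided the $\epsilon_j$ are chosen small enough that a definite fraction of the $Y_j$-distance between $\partial K_{j-1}$ and $\partial K_j$ survives in the limit metric, so that any divergent path in $M\setminus A$ has infinite $Y$-length. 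The main obstacle is the construction of the transitional immersion $Z$: one must patch $Y_j$ on $K_j$ with the disjoint local pieces of $X$ on the $U_{\alpha(p_i)}$ into a single globally defined, conformal, non-degenerate minimal immersion on the admissible set $K_j^*$, respecting the period/flux conditions along the bridging arcs $\gamma_i$ while preserving completeness and finite total curvature. This step relies on the Enneper--Weierstrass representation and Mergelyan-type approximation for CMIs on admissible sets (presumably already incorporated into the more precise Theorem~\ref{th:MT}, of which Theorem~\ref{th:intro1} is a simplified version); once $Z$ is in hand, the remainder of the inductive step is a direct invocation of Theorem~\ref{th:intro1}.
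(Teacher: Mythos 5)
Your strategy is essentially the paper's: exhaust $M$ by smoothly bounded Runge compact domains meeting $U$ well, at each step patch the previous immersion with the local data $X$ on newly captured components of $U$ into a generalized conformal minimal immersion on an admissible set (the ``transitional $Z$''), apply the Runge-type theorem with interpolation, and pass to the limit. The patching step is exactly the extension argument in the proof of Theorem \ref{th:MT} (Claim \ref{cl:conazo}), and your treatment of completeness at the punctures $A$ and of the flux, interpolation, and approximation estimates in the limit is in line with the paper.

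However, there is a genuine gap in your condition (iv). You claim that an application of Theorem \ref{th:intro1} produces $Y_{j+1}$ with $\dist_{Y_{j+1}}(\partial K_j,\partial K_{j+1})>1$, but Theorem \ref{th:intro1} provides no control whatsoever over the intrinsic diameter of the compact annular region between $\partial K_j$ and $\partial K_{j+1}$. The completeness guaranteed by Theorem \ref{th:intro1} is only about the behaviour of $Y_{j+1}$ near the punctures $E_0$ and near the points $E$ of the auxiliary compact surface $\Sigma_{j+1}$; the set $\partial K_{j+1}$ is compactly contained in $\Sigma_{j+1}\setminus(E\cup E_0)$, so its intrinsic distance to $\partial K_j$ is finite and can, a priori, be arbitrarily small. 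No amount of closer approximation can make it large, since approximation of the Weierstrass data can only shrink distances by an $\epsilon$-amount, not stretch them. To enforce (iv) one must actively deform the immersion near $\partial K_j$; the paper does this with a Jorge--Xavier labyrinth $K\subset\mathring M_j\setminus M_j''$ on which the first coordinate is pushed off by a large translation $T$, and then invokes the ad hoc Lemma \ref{lem:coorfij} to produce a full conformal minimal immersion that approximates this modified data while \emph{holding coordinates $3,\ldots,n$ fixed}. Fixing those coordinates is what makes the labyrinth argument work: a divergent path that avoids the labyrinth stays long because $|\partial Y_{j,n}|$ is unchanged, while one that meets the labyrinth stays long because of the $Y_{j,1}$-jump. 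This coordinate-fixing lemma is genuinely additional machinery outside the scope of Theorem \ref{th:intro1}/Theorem \ref{th:MT}, and your proposal offers no substitute for it. Until you supply such a mechanism, your condition (iv), and hence completeness of the limit $Y$ at the non-puncture ends of $M$, is unjustified.
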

%
%
The assumption that $U$ is locally connected is clearly necessary for the last statement in the theorem concerning approximation and interpolation.

In the particular case when $A=\varnothing$, Theorem \ref{th:intro2} is an analogue of the aforementioned Runge-Bishop theorem with jet interpolation and follows easily from the results in \cite{AlarconCastro-Infantes2019APDE}; see also \cite{AlarconLopez2012JDG,AlarconForstneric2014IM,AlarconForstnericLopez2016MZ}. The methods in these sources rely strongly on power complex analytic tools coming from modern Oka theory (we refer to Forstneri\v c \cite{Forstneric2017} for a comprehensive monograph on the subject). On the other hand, a similar result to Theorem \ref{th:intro2} in case $n=3$ and $\Lambda=\varnothing$ was obtained in \cite{Lopez2014JGA}, again using the spinor representation formula for minimal surfaces which is only available in $\r^3$. Theorem \ref{th:intro2} is the first known result of its kind for $A\neq\varnothing$ and $n>3$, even without asking $Y$ to be complete.

As has been made apparent in this introduction, the results we provide in this paper subsume most of the currently known results in the theories of approximation and interpolation for conformal minimal surfaces in $\r^n$, including the somehow simpler case $n=3$. At this time we do not know, for instance, whether the immersion $Y$ in Theorem \ref{th:intro1} can be chosen to be an embedding when $n\ge 5$ and $X|_\Lambda$ is injective; the corresponding result for general minimal surfaces (without taking care of the total curvature) was obtained in \cite{AlarconForstnericLopez2016MZ,AlarconCastro-Infantes2019APDE}. On the other hand, our method of proof also works for null holomorphic curves in the complex Euclidean space $\c^n$ $(n\ge 3)$, and hence the analogous results for these objects of Theorems \ref{th:intro1} and \ref{th:intro2} hold true. It does not seem to us, however, that the approach in this paper could be adapted to deal with more general families of directed holomorphic immersions of open Riemann surfaces as those in \cite{AlarconForstneric2014IM,AlarconCastro-Infantes2019APDE}; nevertheless, we expect that it could be useful to study some particular instances having good algebraic properties.

%
%
\subsection*{Method of proof} 
The proof of Theorem \ref{th:intro1} follows the standard approach of controlling the periods of the Weierstrass data, 
but it presents important innovations. We begin by proving in Section \ref{sec:Mergelyan} a (local) Mergelyan theorem for complete minimal surfaces with finite total curvature (see Theorem \ref{th:Merg}). For that, we adapt the techniques in  \cite{AlarconForstneric2014IM,AlarconForstnericLopez2016MZ}, using the ellipticity of the null quadric $\Agot_*^{n-1}$ of $\c^n$ (see \eqref{eq:nullquadric}) and sprays generated by the flows of complete vector fields along it, which have been developed in the compact case (i.e., when $E_0=\varnothing$). This step is not required if the set $K$ in Theorem \ref{th:intro1} is a strong deformation retract of $\Sigma\setminus E$.

Next, we obtain in Section \ref{sec:Royden} an extension of the Behnke-Stein-Royden theorem (Theorem \ref{th:BSR}) in which extra control on the divisor of the approximating function is provided; see Proposition \ref{pro:Royden0}. This result, which may be of independent interest, is key to ensure the completeness of the immersion $Y$ in Theorem \ref{th:intro1}, as well as to deal with the special case $n=3$. In Section \ref{sec:sprays} we introduce the period dominating sprays that will be used in the proof of the main theorem (see Lemma \ref{le:spray}); the main novelties here are that the sprays are of multiplicative nature and that, instead of working with the null quadric $\Agot_*^{n-1}$, we consider its biholomorphic copy 
\[
	\Sgot_*^{n-1}=\Big\{u=(u_1,\ldots,u_n)\in\c^n\setminus\{0\}\colon  u_1u_2=\sum_{j=3}^n u_j^2\Big\}.
\]
The special geometry of this quadric enables us to approximate, in a simple way, meromorphic maps $u=(u_1,\ldots,u_n)\colon K\to\Sgot_*^{n-1}$, defined on a Runge compact set $K$ of an open Riemann surface $\Sigma\setminus E$ as in Theorem \ref{th:intro1}, by meromorphic maps $\hat u=(\hat u_1,\ldots,\hat u_n)\colon \Sigma\setminus E\to\Sgot_*^{n-1}$. For that, we first approximate $u_1$ by some $\hat u_1$ and then  approximate the $(n-2)$-tuple $(u_3,\ldots,u_n)$ by a suitable meromorphic map $(\hat u_3,\ldots,\hat u_n)$; doing this in the right way, the function $\hat u_2$ defined on $\Sigma\setminus E$ by
\[
	\hat u_2=\frac{\sum_{j=3}^n \hat u_j^2}{\hat u_1}
\]
completes the task.
With the mentioned tools at hand, we prove Theorem \ref{th:MT} (and hence Theorem \ref{th:intro1}) in Section \ref{sec:Runge}. At this point, the main concern is to control the divisors of all the approximating functions at each step of the construction, this enables us to avoid the appearance of branch points and guarantee the completeness of the resulting immersion, while controlling the periods and ensuring the approximation condition. In this stage we shall systematically use the Hurwitz theorem from 1895 (see \cite{Hurwitz1985} or e.g.\  \cite[\textsection VII.2.5, p.\ 148]{Conway1973}) associating the zeros of a convergent sequence of holomorphic functions with the ones of its limit function.
%
%
%

Finally, we prove Theorem \ref{th:intro2} (and its more precise version Theorem \ref{th:M-Lgen}) in Section \ref{sec:Mittag-Leffler} by a recursive application of Theorem \ref{th:MT}, combined with a standard procedure for ensuring the completeness of the limit immersion.


\section{Preliminaries and notation}\label{sec:prelim}

We write $\n=\{1,2,3,\ldots\}$, $\z_+=\n\cup\{0\}$, and $\imath=\sqrt{-1}$, and denote by $\cp^1=\c\cup\{\infty\}$ the Riemann sphere. We shall use the symbols $\Re$ and $\Im$ to denote, respectively, the real and the imaginary part, and identify $\c^n$ with $\r^{2n}$. 
We denote by $|\cdot|$ the Euclidean norm in $\r^n$.
Given maps $f,g\colon X\to Y$ between sets, we write $f\equiv g$ to mean that $f(x)=g(x)$ for all $x\in X$; we write $f\not\equiv g$ otherwise. The uniform norm (or $\sup$ norm) of  a map $f\colon X\to \r^n$ on $X$ is the non-negative number
\[
\|f\|_{X}=\sup  \{|f(x)|\colon x\in X\}.
\]
If $f,g\colon X\to \r^n$ are maps,   the notation $f\approx g$ shall mean that $\|f-g\|_X$ is so close to $0$   that no significant deviation  between $f$ and $g$  can be found  in a given argumentation. In this case we say that $f$ approximates $g$ on $X$.
\begin{definition}\label{de:full}
Given a set $X$, a map   $X\to \c^n$  is said to be {\em full} if its image lies in no affine hyperplane of $\C^n$. Full maps $X\to\cp^n$ are defined in the same way. 
\end{definition}
Assume that $X$ is a topological space.  A {\em Jordan arc} in  $X$ is an embedding $ [0,1]\to X$; an open Jordan arc in $X$ is an embedding  $(0,1)\to X$.  Continuous maps $C\colon \s^1\to X$ are said to be closed curves in $X$;  if in addition $C$ is an embedding then  the closed curve  is said to be simple or a {\em Jordan curve}. Usually we shall identify arcs and curves with their image.  We denote by $H_1(X,\z)$ the first homology group with integer coefficients on $X$.  

A smooth surface is said to be {\em open} if it is not compact and has no boundary. Throughout this paper surfaces are considered to have no boundary unless the contrary is indicated.
Assume that $M$ is an (open) smooth surface.
%
%
\begin{definition}\label{de:Runge}
A nonempty (possibly disconnected) compact set $S$ in  $M$ is said to be {\em Runge} if $M\setminus S$ has no relatively compact components.
\end{definition}
%
%
\begin{definition}\label{def:admissible}
A nonempty (possibly disconnected) compact set $S$ in  $M$ is called {\em admissible} if  
it is of the form $S=K\cup \Gamma$, where $K$ is a (possibly empty)
finite union of pairwise disjoint compact domains with smooth boundaries in $M$ 
and $\Gamma = \overline{S \setminus K}$ is a (possibly empty) union of finitely many pairwise disjoint Jordan curves in $S\setminus K$ and 
smooth Jordan arcs   in $\overline{S \setminus K}$ meeting $K$ only in their endpoints
(or not at all) and such that their intersections with the boundary 
$bK$ of $K$ are transverse.
\end{definition} 
%
%
\begin{definition}\label{de:skeleton}
Let $S=K\cup \Gamma$ be a connected admissible subset in   $M$ with  $K\neq \emptyset$, and fix a point $p_0\in \mathring K=\mathring S$ and a (possibly empty) finite subset $A \subset \mathring K\setminus \{p_0\}$ of cardinal $m\in \z_+$.  A family of smooth curves
\[
	\{C_j\colon j=1,\ldots,l\}, \quad l=m+\dim H_1(S,\z),
\]
 is said to be a  {\em skeleton of $S$ based at $(p_0,A)$} if the following conditions hold.
 \begin{enumerate}[\rm ({A}1)]
 \item $C_j\colon [0,1]\to S$ is a Jordan arc with   $C_j(0)=p_0$,  $C_j(1)\in A$, and $C_j([0,1))\cap A=\varnothing$,   $j=1,\ldots,m$, and  $A= \{ C_1(1),\ldots,C_m(1)\}$.
 \smallskip
 \item $C_j\colon \s^1\to  S$  is a closed curve  containing $p_0$ and disjoint from $A$, $j=m+1,\ldots,l$. These curves do not need to be simple.
 \smallskip
\item  $\{C_{m+1},\ldots,C_l\}$ 
determines  a basis of the homology group $H_1(S,\z)$.  
 \smallskip
 \item  $C=\bigcup_{j=1}^l C_j$ is a strong deformation retract of $S$.
  \smallskip 
\item  There is a  Jordan arc $\gamma_j\subset (C_j\cap \mathring K\setminus A)\setminus \big(\bigcup_{i\neq j } C_i\big)$ such that  $C_j|_{C_j^{-1}(\gamma_j)}$ is injective, $j=1,\ldots,l$.
\end{enumerate}
\end{definition}
By basic topology, every admissible subset $S\subset M$  in the assumptions of Definition \ref{de:skeleton} carries skeletons based at any  pair $(p_0,A)$ as above. Furthermore, if $\Gamma=\varnothing$ then the skeleton can be chosen such that $C\subset \mathring K$ and $C_i\cap C_j=\{p_0\}$ for all $i\neq j$. On the other hand, if $S$ is Runge in $M$ then $C$ is Runge in $M$ as well by {\rm (A4)}.

\subsection{Divisors and function spaces}\label{ss:spaces}
Given a  set $X$, we denote by $\Div(X)$ the free commutative group of {\em finite} divisors of $X$ with multiplicative notation:
\[
	\Div(X)=\Big\{\prod_{j=1}^k q_j^{n_j}\colon k\in \n,\; q_j\in X,\; n_j\in \z \Big\}.
\]
Here,  $q^0=1$ for all $q\in X$. 
Given $D=\prod_{j=1}^k q_j^{n_j}\in\Div(X)$,  the set $\supp  (D)=\{q_j\colon n_j\neq 0\}\subset X$ is said to be the {\em support} of $D$. The divisor $D$ is said to be {\em effective} if $n_j\geq 0$ for all $j=1,\ldots,k$. We write $D_1\geq D_2$ to mean that $D_1 D_2^{-1}$ is   effective.  

Let  $M$ and $N$ be a pair of complex manifolds and $S\subset M$ be a subset. We denote by $\Ccal^0(S,N)$   the space of continuous maps $S\to N$, and  write  $\Ccal^0(S)=\Ccal^0(S,\c)$. As it is customary, we denote by $\Ocal(S,N)$ the space of all holomorphic maps from some neighborhood of $S$ in $M$ (depending on the function) into $N$.  

We assume in the sequel that $M$ is a Riemann surface (either open or compact). For any subset $S\subset M$ we denote $\Ocal(S)=\Ocal(S,\c)$, whereas $\Ocal_\infty(S)$ will denote  the space of all meromorphic functions on some neighborhood of $S$ in $M$.   For a finite subset $E\subset \mathring S$, we denote 
\begin{equation}\label{eq:OcalinftySE}
	 \Ocal_\infty(S|E)=\Ocal_\infty(S)\cap \Ocal(S\setminus E);
\end{equation}
i.e., $\Ocal_\infty(S|E)$ is the space of all meromorphic functions on a neighborhood of $S$ which have poles (if any) only at points in $E$.
Likewise, we denote by $\Omega(S)$  the space of all holomorphic $1$-forms on some neighborhood of $S$ in $M$, $\Omega_\infty(S)$  the space of all meromorphic $1$-forms on some neighborhood of $S$ in $M$, and   $\Omega_\infty(S| E)=\Omega_\infty(S)\cap\Omega(S\setminus E)$.

Assume that the set $S\subset M$ is compact. For any $f\in \Ocal_\infty(S)$, $f\not\equiv 0$, $f\not\equiv\infty$, we call $Z(f)$ and $ P(f)$   the (finite) sets of zeros and poles of $f$ in $S$, and write $s_p\in \n$  for the zero or pole order of $f$ at $p$ for all $p\in Z(f)\cup P(f)$ . We set 
 \[
 	[f]_0=\prod_{p\in Z(f)} p^{s_p}\quad \text{and}\quad [f]_\infty=\prod_{p\in P(f)} p^{s_p}
\]
the effective divisors in $\Div(S)$ of zeros and poles of $f$ in $S$, respectively, and   
\[
	[f]:=\frac{[f]_0}{[f]_\infty}\in \Div(S)
\]  the divisor of $f$ in $S$. (We do not use the more customary parenthetical notation for divisors in order to avoid ambiguities.) We use the same notation for the corresponding divisors of a nonzero meromorphic $1$-form on $S$. 

For each effective divisor $D\in \Div(S)$   we set
\begin{equation}\label{eq:OcalDS}
	\Ocal_{D}(S)=\{f\in   \Ocal(S)\colon  [f]\geq D\}.
\end{equation}
 Assume that $S=K\cup \Gamma\subset M$ is  admissible in the sense of Definition \ref{def:admissible}. We denote 
 \[
 	\Acal(S,N)=\Ccal^0(S,N)\cap \Ocal(\mathring S,N) \quad\text{and}\quad \Acal(S)=\Acal(S,\c).
\]
For an effective divisor $D\in \Div(\mathring S)$, we denote 
\begin{equation}\label{eq:AcalD(S)}
\Acal_D(S)=\Acal (S)\cap \Ocal_D(U),
\end{equation}
 where $U\subset \mathring S$ is any compact neighborhood of $\supp (D)$. We also call  
 \[
 \Acal_\infty(S)=\Ccal^0(S,\cp^1)\cap \Ccal^0(bS,\c)\cap \Ocal_\infty(\mathring S),
 \] 
 and, given a finite set $E\subset\mathring S$, 
\[
	\Acal_\infty(S|E)=\Acal_\infty(S)\cap \Ocal(\mathring S\setminus E).
\]
Note that if $f\in\Acal_\infty(S)$ then, by the identity principle, $f$ has at most finitely many poles all which lie in $\mathring S$. 

Let $n\ge 3$ be an integer. We denote by
 \begin{equation}\label{eq:nullquadric1}
	\Sgot^{n-1}_*=\Big\{(u_1\ldots,u_n)\in \c^n\setminus\{0\}\colon u_1 u_2 = \sum_{j=3}^n u_j^2\Big\}.
\end{equation}
The punctured complex quadric $\Sgot^{n-1}_*$ is canonically identified with the punctured null quadric
 \begin{equation}\label{eq:nullquadric}
\Agot^{n-1}_*=\Big\{(z_1\ldots,z_n)\in \c^n\setminus\{0\}\colon \sum_{j=1}^n z_j^2=0\Big\}
\end{equation}
by the natural linear biholomorphism  $\Xi \colon \c^n\setminus\{0\}\to \c^n\setminus\{0\}$ given by
 \begin{equation}\label{eq:Thetabiho}
  \Xi(z_1,z_2,\ldots,z_n):=\big(-z_1+ \imath z_2,z_1+\imath z_2,z_3,\ldots, z_n\big),
   \end{equation}
which maps $\Agot^{n-1}_*$ into $\Sgot^{n-1}_*$. The punctured null quadric $\Agot_*^{n-1}$ is a complex homogeneous manifold, and hence an Oka manifold (see \cite[Example 5.6.2]{Forstneric2017}).

For any map $f=(f_1,\ldots,f_n)\in\Ccal^0(S,\cp^1)^n$ write
\begin{equation}\label{eq:finfi}
f^{-1}(\infty)=\bigcup_{j=1}^n f_j^{-1}(\infty),
\end{equation}
 and denote
 \[
\Ocal_\infty(S,\Sgot^{n-1}_*)=\{f\in \Ocal_\infty(S)^n\colon f\big(S\setminus f^{-1}(\infty)\big)\subset \Sgot^{n-1}_*\}
\]
and 
\[
\Acal_\infty(S,\Sgot^{n-1}_*)=\{f\in \Acal_\infty(S)^n\colon f\big(S\setminus f^{-1}(\infty)\big)\subset \Sgot^{n-1}_*\}.
\]
 Note that $f^{-1}(\infty)\subset\mathring S$ for all $f\in \Acal_\infty(S,\Sgot^{n-1}_*)$.   Given a finite set $E\subset\mathring S$,  we denote
  \[
  \Ocal_\infty(S|E,\Sgot^{n-1}_*)=  \Ocal_\infty(S,\Sgot^{n-1}_*)\cap   \Ocal(S\setminus E,\Sgot^{n-1}_*)
  \] 
  and 
    \[
  \Acal_\infty(S|E,\Sgot^{n-1}_*)=  \Acal_\infty(S,\Sgot^{n-1}_*)\cap   \Ocal(\mathring S\setminus E,\Sgot^{n-1}_*).
  \]
  
  The spaces $\Ocal_\infty(S,\Agot^{n-1}_*)$, $\Acal_\infty(S,\Agot^{n-1}_*)$, $\Ocal_\infty(S|E,\Agot^{n-1}_*)$, and $\Acal_\infty(S|E,\Agot^{n-1}_*)$ are defined in the same way. 


\subsection{Minimal surfaces in $\r^n$}\label{sec:minimal}
Let $M$ be an open Riemann surface. 
 A map  $X=(X_1,\ldots,X_n)\colon M\to\r^n$ $(n\ge 3)$  is   a  conformal minimal immersion if and only if  $X$ is harmonic, and its complex derivative $\di X\in\Omega(M)^n$ (i.e., the $(1,0)$-part of the exterior differential $dX$ of $X$) vanishes nowhere on $M$ and satisfies $\sum_{j=1}^n\di X_j^2\equiv 0$. Given a holomorphic $1$-form $\theta$ on $M$ with no zeros, the last two conditions are equivalent to $\di X/\theta\in \Ocal(M,\Agot^{n-1}_*)$ or to $\Xi\big( \di X/\theta\big)\in \Ocal(M,\Sgot^{n-1}_*)$; see \eqref{eq:Thetabiho}. Moreover, in this case $X\colon M\to\r^n$ is given by the formula
\[
	M\ni p\longmapsto X(p_0)+\Re\int_{p_0}^p 2\di X
\]
for any fixed base point $p_0\in M$. This is known in the literature as the {\em Enneper-Weierstrass representation} formula for minimal surfaces in $\r^n$ (see e.g.\ \cite{Osserman1986}).
 A map $X:S\to \r^n$ from a subset $S\subset M$ is said to be a {\em conformal minimal immersion} if it extends as a conformal minimal immersion to some open neighborhood of $S$ in $M$ (depending on $X$).

%
%
%
\begin{definition}\label{def:full-CMI}
We say that a conformal minimal immersion $X:M\to\r^n$ is {\em full} if 
the holomorphic map $\di X/\theta\colon M\to\c^n$ is full for any holomorphic $1$-form $\theta$ vanishing nowhere on $M$.
\end{definition}
The group homomorphism $\Flux_X\colon H_1(M,\z)\to\r^n$ given by
\[
	\Flux_X(\gamma)=2\int_\gamma\Im (\di X)=-2\imath\int_\gamma\di X
	\quad
	\text{for every loop $\gamma\subset M$},
\]
is said to be the {\em flux map} (or just the {\em flux}) of $X$. 
A conformal minimal immersion $X:M\to \r^n$  is said to be of {\em finite total curvature} (acrostically, FTC) if
\[
\int_M K\, dA>-\infty,
\]
where $K$ is the Gauss curvature of the Riemannian metric 
\begin{equation}\label{eq:ds2}
	ds^2=2\sum_{j=1}^n |\di X_j|^2
\end{equation}
induced on $M$ by the Euclidean one via $X$, and $dA$ is the area element of $ds^2$. 

The conformal minimal immersion $X:M\to\r^n$ is called {\em complete} if the metric $ds^2$ is complete in the classical sense of Riemannian geometry. Assume now that $M$ is a Riemann surface with compact boundary $bM\subset M$ (possibly $bM=\varnothing$). If $M$ carries a {\em complete} conformal minimal immersion $X:M\to \r^n$ of FTC (here, $X$ is complete if and only if  $X\circ \gamma$ has infinite Euclidean length for any divergent arc $\gamma\colon[0,1)\to M$), then the classical results by Huber \cite{Huber1957CMH} and Chern-Osserman \cite{ChernOsserman1967JAM} impose the following conditions.
\begin{enumerate}[\rm (i)]
\item $M$ is conformally equivalent to $R\setminus E$, where $R$ is  a compact Riemann surface with compact boundary $bR$ and $E\subset R\setminus bR$ is a finite subset.
\smallskip
\item $\di X$ extends meromorphically to $R$ with an effective pole at each point of $E$. 
\end{enumerate}
Conversely, if $R$ and $E$ are as in {\rm (i)} and if $X\colon R\setminus E \to \r^n$ is a conformal minimal immersion satisfying {\rm (ii)}, then $X$ is complete and of FTC.
This discussion justifies the following definition.
\begin{definition} \label{def:GCMI}  
Let $S=K\cup \Gamma$ be an admissible set in an open Riemann surface $M$ (see Definition \ref{def:admissible}), let $E\subset \mathring K=\mathring S$ be a finite subset,
and let $\theta$ be a nowhere vanishing holomorphic $1$-form on $M$. 
A {\em generalized complete conformal minimal immersion of finite total curvature}  
from $S\setminus E$ into $\R^n$ $(n\ge 3)$ is a pair $(X,f\theta)$, 
where $X\colon S\setminus E\to \r^n$ is a $\Ccal^1$ map that is a conformal 
minimal immersion on $\mathring K\setminus E$ and $f$ is a map in $\Acal_\infty(S|E,\Agot^{n-1}_*)$ satisfying the following conditions.
\begin{enumerate}[\rm (i)]
\item $f\theta =2\di X$ holds on $K\setminus E$.
\smallskip
\item For any smooth path $\alpha$ in $M$ parameterizing a connected component of 
$\Gamma$, we have $\Re(\alpha^*(f\theta))=\alpha^*(dX)=d(X\circ \alpha)$.
\smallskip
\item  $f^{-1}(\infty)=E$; see \eqref{eq:finfi}.
\end{enumerate}

We denote by 
\begin{equation}\label{eq:GCCMI}
	\GCMI_\infty(S|E,\R^n)
\end{equation}
the space of all generalized complete conformal minimal immersions of FTC from $S\setminus E$ into $\R^n$, and write  $2\partial \hat X=f\theta$ for all $\hat X=(X,f\theta)$ in $\GCMI_\infty(S|E,\R^n)$. If $\hat X=(X,f\theta)$, $\hat Y=(Y,g\theta)\in \GCMI_\infty(S|E,\r^n)$,  the notation $\hat X\approx \hat Y$ on $S$ means that $X-Y\approx 0$ and $f-g\approx 0$ on $S\setminus E$, and hence are continuous on $S$.
For $\hat X=(X,f\theta)\in \GCMI_\infty(S|E,\R^n)$, the {\em flux map}  $\Flux_{\hat X}\colon H_1(S\setminus E,\z)\to\r^n$ of $\hat X$  is the group homomorphism given by
\[
\Flux_{\hat X}(\gamma):= \int_\gamma\Im (f \theta)=- \imath \int_\gamma f\theta, \quad
	\text{for every loop $\gamma\subset S$}.
\]
\end{definition}
\begin{remark}
Given  $\hat X=(X,f\theta)\in \GCMI_\infty(S|E,\R^n)$, the (well defined) map 
$[f_1:\cdots:f_n]:S\setminus E\to \c\P^{n-1}$   extends holomorphically  to the punctures $E\subset \mathring S$, and hence it lies in $\Acal(S,\cp^{n-1})$.
\end{remark}
Finally, as above, we denote by 
\begin{equation}\label{eq:CCMI}
	\CMI_\infty(S|E,\R^n)
\end{equation}
the subspace of those immersions $(X,f\theta)\in\GCMI_\infty(S|E,\r^n)$ such that $X$ extends as a conformal minimal immersion to some neighborhood of $S\setminus E$ in $M$; in this case, we just write $X$ for $(X,f\theta=2\di X)$.


\section{Mergelyan's theorem for complete minimal surfaces of finite total curvature}\label{sec:Mergelyan}
In this section we  prove a  Mergelyan type theorem for complete minimal surfaces with FTC, asserting that generalized complete conformal minimal immersions of finite total curvature on a finitely punctured admissible subset, $S\setminus E_0$, can be approximated uniformly on $S$ by complete conformal minimal immersions on a neighborhood of $S\setminus E_0$.

\begin{theorem}\label{th:Merg} 
Let $M$ be an open  Riemann surface, $\theta$ be a nowhere vanishing holomorphic $1$-form on $M$,   and $S=K\cup\Gamma\subset M$ be a connected   admissible subset (see Definition \ref{def:admissible}). Also let $E_0$ and $\Lambda$ be a pair of disjoint finite subsets of $\mathring S$ and let $n\geq 3$ be an integer.
For any  $\hat X=(X,f\theta)\in \GCMI_\infty(S|E_0,\R^n)$, any number $\epsilon>0$, and any integer $r\geq 0$, there is $Y\in \CMI_\infty(S|E_0,\R^n)$ satisfying the following conditions.
\begin{enumerate}[{\rm (i)}]
\item $Y$ is full (see Definition \ref{def:full-CMI}).
\smallskip
 \item $Y-X$ extends to $S$ as a continuous map and $\|Y-X\|_{S}<\epsilon$.
\smallskip
\item $Y-X$ vanishes at least to order $r$ at every point of $\Lambda\cup E_0$.
\smallskip
\item $\Flux_Y=\Flux_{\hat X}$.
\end{enumerate}
\end{theorem}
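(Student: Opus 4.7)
The plan is to reduce the theorem to an approximation problem for the Weierstrass data $f \in \Acal_\infty(S|E_0, \Agot_*^{n-1})$. Any $Y \in \CMI_\infty(S|E_0, \R^n)$ is determined up to a constant by its derivative $2\di Y = \tilde f\theta$ with $\tilde f \in \Ocal_\infty(U|E_0, \Agot_*^{n-1})$ defined on some neighborhood $U$ of $S$. If we can produce such a $\tilde f$ that (a) approximates $f$ uniformly on $S$, (b) has $\tilde f - f$ vanishing to order $r$ at every point of $\Lambda \cup E_0$, and (c) has the same periods as $f\theta$ along every loop in a basis of $H_1(S\setminus E_0, \Z)$, then the formula $Y(p) := X(p_0) + \Re\int_{p_0}^p \tilde f\theta$ produces a well-defined conformal minimal immersion on a neighborhood of $S\setminus E_0$ with $\Flux_Y = \Flux_{\hat X}$. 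Condition (b) at the points of $E_0$, combined with the fact that $\tilde f$ keeps an effective pole of the same order as $f$ at each $e\in E_0$, guarantees by the Chern–Osserman–Huber criterion recalled in Section \ref{sec:minimal} that $Y$ is complete and of finite total curvature.

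The next step is to fix a skeleton $C = \bigcup_{j=1}^{l} C_j$ of $S$ based at some pair $(p_0, A)$ with $A \subset (E_0\cup\Lambda)\setminus\{p_0\}$ (property (A4) will let us pass from approximation on a neighborhood of $C$ to approximation on a neighborhood of $S$ by Mergelyan, while (A3) reduces period control to the finitely many curves $C_{m+1},\dots,C_l$). On a small neighborhood of $\Lambda \cup E_0$ we simply prescribe $\tilde f$ to equal $f$ modulo a correction vanishing to order $r$; this is a local, trivially solvable condition in $\Ocal_\infty$. The serious task is extending this local germ to a global map $\tilde f \in \Ocal_\infty(U|E_0, \Agot_*^{n-1})$ on a neighborhood of $S$ approximating $f$ on $S$ and with matching periods. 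Since $\Agot_*^{n-1}$ is complex homogeneous, hence Oka, the Oka–Weil principle (in its Mergelyan variant adapted to admissible sets, as in \cite{AlarconForstneric2014IM, AlarconForstnericLopez2016MZ}) yields a holomorphic approximation $\tilde f_0$ of $f$ on a neighborhood of $S$; the difficulty is that $\tilde f_0$ will in general not satisfy (c).

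To restore the periods I would build a period-dominating spray. Following the standard scheme, I would choose finitely many complete holomorphic vector fields $V_1,\dots,V_N$ on $\Agot_*^{n-1}$ whose values span $T_u\Agot_*^{n-1}$ at every point $u\in\Agot_*^{n-1}$, together with smooth cut-off functions supported on the short sub-arcs $\gamma_j \subset C_j$ provided by (A5) and vanishing on a neighborhood of $\Lambda \cup E_0$. Composing the flows of the $V_i$ at time $\zeta_{ij}h_j(p)$ with $\tilde f_0$ yields a holomorphic family $\tilde f_\zeta \in \Ocal_\infty(U|E_0, \Agot_*^{n-1})$, $\zeta$ in a complex ball, such that the period map
\[
	\zeta \longmapsto \Bigl(\int_{C_j} \tilde f_\zeta\,\theta - \int_{C_j} f\,\theta\Bigr)_{j=m+1}^{l}
\]
is a submersion at $\zeta=0$. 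Since the cut-offs are supported away from $\Lambda\cup E_0$, the spray preserves condition (b). The implicit function theorem then produces $\zeta$ close to $0$ for which all periods match, and closeness gives (a); this is $\tilde f$.

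The main obstacle is the simultaneous handling of the punctures $E_0$ and the interpolation set $\Lambda$: the approximation must be carried out in the class $\Ocal_\infty(U|E_0, \Agot_*^{n-1})$ rather than $\Ocal(U, \Agot_*^{n-1})$, which forces us to work with meromorphic rather than holomorphic Oka–Weil approximation and to control the divisor of $\tilde f$ so that no new zeros or poles appear and so that the prescribed principal parts at $E_0$ are not disturbed. Once this is done, fullness (condition (i)) is achieved at the end by a final generic small perturbation inside the period-dominating spray, since fullness is an open and dense condition among holomorphic maps into $\Agot_*^{n-1}$ and the spray has enough freedom to realize an arbitrary tangent direction while keeping (a)–(iv) intact.
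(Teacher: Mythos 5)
Your outline correctly reproduces the standard framework (pass to the Weierstrass datum, build a period-dominating spray out of flows of complete vector fields tangent to $\Agot^{n-1}_*$, close the periods by the implicit function theorem, recover $Y$ by integration), which is indeed the skeleton of the paper's argument. However, you explicitly flag the central technical obstacle --- performing the Oka--Weil/Mergelyan approximation in the meromorphic class $\Acal_\infty(S|E_0,\Agot^{n-1}_*)$ rather than in $\Ocal(S,\Agot^{n-1}_*)$, while keeping the pole orders at $E_0$ intact so that the Chern--Osserman--Huber criterion still applies --- and you do not offer a mechanism to overcome it. That is precisely where the paper's proof has its content, so as written your argument has a genuine gap: there is no off-the-shelf ``meromorphic Oka--Weil theorem for $\Agot^{n-1}_*$ with divisor control,'' and an unmodified application of the Oka property to $f$ would not a priori preserve the principal parts at $E_0$.

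The paper's resolution is elementary in retrospect but essential. First fix, via the Weierstrass theorem on the open Riemann surface $M$, a function $g\in\Ocal(M)$ with $[g]=\prod_{p\in E_0}p^{o(p)}$ where $o(p)=\max_j{\rm Ord}_p(f_j)$. Then $f_0:=gf$ lies in $\Acal(S,\Agot^{n-1}_*)$, i.e.\ it is an honestly \emph{holomorphic} $\Agot^{n-1}_*$-valued map on $S$. One now performs the entire spray construction in this holomorphic class: the cut-off core functions $f_{i,j}$ are replaced by $h_{i,j}\in\Ocal(S)$ vanishing to order $r_0+1$ at $E_0\cup\Lambda$ (Runge--Mergelyan with jet interpolation on the Runge set $C$), and $f_0$ itself is replaced by $h_0\in\Ocal(S,\Agot^{n-1}_*)$ with $h_0-f_0$ vanishing to order $r_0+1$ at $E_0\cup\Lambda$ (Oka principle with jet interpolation). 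Then one defines the spray $\Phi_{H,h_0}$ and finally \emph{divides back by $g$}. Provided $r_0\geq r+\sum_{p\in E_0}\sum_{i}{\rm Ord}_p(f_i)$, the high-order jet interpolation guarantees both that $\Phi_{H,h_0}(\zeta,\cdot)/g-f\in\Acal_{\Delta_2}(S)^n$ with $\Delta_2=\prod_{p\in E_0\cup\Lambda}p^r$ (so poles at $E_0$ are preserved exactly and interpolation to order $r$ holds) and that the quotient is holomorphic off $E_0$. Without introducing $g$ your scheme has no way to ensure this.

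Two smaller issues are also worth noting. First, for the period map to be a submersion you need a non-flatness reduction on every component of $K$ (the paper's Claim \ref{cl:posgen}), since the spanning condition $\{V_1(f_0(p_{1,j})),\ldots,V_m(f_0(p_{m,j}))\}=\c^n$ requires $f_0$ to move genuinely on each arc $\gamma_j$; in your proposal this hypothesis is tacitly assumed but not arranged. Second, the paper obtains fullness of $Y$ \emph{before} the approximation step, by first ensuring $f$ is full on a component $U_0$ of $K$ (again part of Claim \ref{cl:posgen}), so that condition (D) closeness implies fullness of $\hat f$; a ``final generic perturbation inside the spray'' as you suggest could in principle work, but you would need to verify it preserves conditions (ii)--(iv), which is not automatic.
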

The improvement of $Y$ with respect to $X$ is that $Y$ is a true conformal minimal immersion in a neighborhood of $S$, and not just a generalized one on $S$. Moreover, we ensure that $Y$ is full.
%
%
%
\begin{proof} 
We adapt the arguments in \cite{AlarconForstneric2014IM} (see also \cite{AlarconForstnericLopez2016MZ}) to the special framework of complete minimal surfaces of FTC. We start with the following reduction.
\begin{claim}\label{cl:posgen} 
We can assume that $K\neq \varnothing$, $X|_U$ is flat on no component $U$ of $K$, and there is a component $U_0$ of $K$ such that $X|_{U_0}$ is full.
\end{claim}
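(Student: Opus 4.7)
The plan is to realize each of the three features in Claim \ref{cl:posgen} by a small perturbation of $\hat X$ within $\GCMI_\infty(S|E_0,\R^n)$ that preserves the flux, the boundary values on $\Gamma$, and the polar divisor at $E_0$; since the tolerance $\epsilon$ in Theorem \ref{th:Merg} is free, such perturbation errors can be absorbed by halving $\epsilon$ and then applying the theorem to the modified datum. We treat the three features in turn.

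Suppose first that $K=\varnothing$. Then $S=\Gamma$ is connected and $\mathring S=\varnothing$, so $E_0=\Lambda=\varnothing$, and $f\in\Acal_\infty(S|\varnothing,\Agot^{n-1}_*)$ is holomorphic on an open neighborhood $V\subset M$ of $\Gamma$. We choose a smoothly bounded compact domain $K'\subset V$ with $\Gamma\subset\mathring K'$ which deformation-retracts onto $\Gamma$ and a base point $p_0\in\Gamma$, and define
\[
X'(p):=X(p_0)+2\Re\int_{p_0}^{p}f\theta, \qquad p\in K'.
\]
Path-independence holds because Definition \ref{def:GCMI}(ii) forces $\int_\gamma\Re(f\theta)=0$ on every loop $\gamma\subset\Gamma$, and every loop in $K'$ is homotopic to one in $\Gamma$. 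The map $X'$ agrees with $X$ on $\Gamma$, hence $\hat X':=(X',f\theta)\in\GCMI_\infty(K'|\varnothing,\R^n)$ with $S\subset K'$; any solution to Theorem \ref{th:Merg} for the enlarged datum $\hat X'$ on $K'$ restricts to one for $\hat X$ on $S$.

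To arrange simultaneous non-flatness on every component of $K$ and fullness on at least one component, we construct a holomorphic spray $w\mapsto f_w\in\Acal_\infty(S|E_0,\Agot^{n-1}_*)$, $w\in\d_\delta^N$, with $f_0=f$, with $f_w\equiv f$ on a fixed open neighborhood of $\Gamma\cup E_0$, with $[f_w]_\infty=[f]_\infty$ as divisors on $E_0$, and with $\Flux_{\hat X_w}=\Flux_{\hat X}$ for every $w$. Such sprays are built, following \cite{AlarconForstneric2014IM,AlarconForstnericLopez2016MZ}, from the flows of a dominating family of complete holomorphic vector fields on the homogeneous Oka manifold $\Agot^{n-1}_*$, localized by auxiliary holomorphic functions vanishing to high order on $\Gamma\cup E_0$ and nonvanishing inside small disks in $\mathring K\setminus(\Gamma\cup E_0)$, and then corrected by a period-killing spray (of the sort that will appear in Lemma \ref{le:spray}) so that the flux is preserved. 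Non-flatness on a given component $U$ of $K$, and fullness on a given $U_0$, are each open conditions whose failure is cut out by a nontrivial real-analytic functional of $w$; since the dominating property of the spray guarantees that these functionals are not identically zero along the parameter, they vanish only on proper real-analytic subvarieties of $\d_\delta^N$. Picking $w$ outside these finitely many bad sets with $\|\hat X_w-\hat X\|_S<\epsilon/2$, we replace $\hat X$ by $\hat X_w$ and run the main construction with tolerance $\epsilon/2$.

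The main obstacle in this reduction is the simultaneous preservation, by the spray, of all the data defining $\GCMI_\infty(S|E_0,\R^n)$ --- the boundary values on $\Gamma$, the polar divisor at $E_0$, and the flux --- while retaining enough parameter freedom in $w$ to break flatness on every component independently and install fullness on at least one. This is resolved by localizing each elementary perturbation with multiplicative factors whose zero set contains $\Gamma\cup E_0$ (so the boundary data and poles are left intact) and by invoking the period-dominating spray apparatus of the cited sources, adapted to the meromorphic setting at $E_0$, to absorb the first-order period variation and thus preserve the flux exactly.
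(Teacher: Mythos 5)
Your strategy is conceptually in the right direction --- perturb $\hat X$ within $\GCMI_\infty(S|E_0,\R^n)$ while preserving flux, poles, and interpolation conditions --- but the argument as written has two genuine gaps, and it takes a much heavier route than the paper does.

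The main gap is the leap from ``the spray is period-dominating'' to ``non-flatness and fullness are cut out by non-identically-zero real-analytic functionals of $w$.'' Period domination is a statement about the differential of the period map $\Qcal$ being surjective; it says nothing a priori about being able to push $f(U)$ off a complex line pointwise. What you actually need is a \emph{pointwise} flexibility statement: that for each component $U$ the flows, localized at several disjoint small arcs in $\mathring U$, move enough points of $f(U)$ in directions transverse to the ambient line $L\subset\c^n$ and independently enough that $f_w(U)$ escapes every complex line for generic $w$. That is a plausible fact, but it is not a consequence of the period-dominating property, and you do not verify it. The second gap is that you introduce the period correction as a black box (``correct by a period-killing spray \ldots to preserve the flux exactly'') and then assert that the open conditions survive this correction generically --- but the correction constrains $w$ to a codimension-$nl$ analytic subset, and you would need to check that the flat/non-full locus does not swallow that subset. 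This is again believable with enough parameters in $w$, but nowhere established. A minor further imprecision: you speak of holomorphic localizing functions ``vanishing to high order on $\Gamma\cup E_0$''; a holomorphic function cannot vanish on an arc without vanishing identically on its component, and what is really used in the paper are \emph{continuous} bump functions supported away from $\Gamma\cup E_0$ that are only later approximated by holomorphic functions vanishing at the finitely many points of $E_0\cup\Lambda$.

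By contrast, the paper's proof of the claim is elementary and does not touch the spray machinery of Lemma \ref{le:spray} (which belongs to a later section; invoking it here is logically permissible since the lemma does not depend on this claim, but it is an unnecessary forward import). For $K=\varnothing$ the paper simply attaches a small disc $U_0$ on which $\hat X$ is approximately constant and deforms near $U_0\cap S$ via \cite[Lemma 3.3]{AlarconCastro-Infantes2019APDE} so that $f|_{U_0}$ is full; your alternative of integrating $f\theta$ over a tubular neighborhood $K'$ of $\Gamma$ also works (the factor $2$ in your formula $X'=X(p_0)+2\Re\int f\theta$ is spurious, since $f\theta=2\di X$ already carries the $2$), but it only returns a possibly flat $X'$ and still requires the second step. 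For flatness removal the paper normalizes the line to $v=(1,\imath,0,\ldots,0)$, writes $f=(f_1,\imath f_1,0,\ldots,0)$, and uses the explicit family
\[
f_\zeta=\big(f_1(1-\zeta^2h^2),\,\imath f_1(1+\zeta^2h^2),\,2\zeta hf_1,\,0,\ldots,0\big),
\]
where $h$ is a single holomorphic function chosen so that $hf_1\theta$ and $h^2f_1\theta$ are exact, the jets at $(E_0\cup\Lambda)\cap U$ vanish to order $r$, and $1,h,h^2$ are linearly independent. This kills periods and interpolation \emph{by construction}, stays in $\Agot^{n-1}_*$, and automatically leaves a complex line for any $\zeta\neq 0$; the only genericity needed is that such an $h$ exists, which follows from infinite-dimensionality of $\Ocal_{\Delta_U}(U)$. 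To make your argument watertight you would essentially be forced to reproduce a construction of this sort in each flat component, at which point the spray framing becomes overhead rather than a shortcut.
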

Recall that $X|_U$ is flat if and only if $X(U)$ lies in an affine plane in $\r^n$, or equivalently, $(\di X/\theta)(U)\subset \Agot^{n-1}_*$ lies in a complex line in $\c^n$.
\begin{proof}
We first show that we can assume that $K\neq\varnothing$ and $f$ (and hence, $X$) is full on a component $U_0$ of $K$. Indeed, if $K=\varnothing$, then $S=\Gamma$ is either a Jordan arc or a Jordan curve; recall that $S$ is connected. Choose a closed disc $U_0$ in $M$ so small that $U_0\cap S$ is a Jordan arc and  $\hat X$ is approximately constant there. Up to a slight deformation of $\hat X$ on a small neighborhood of $U_0\cap S$ (see \cite[Lemma 3.3]{AlarconCastro-Infantes2019APDE} for details on how to make the deformation), we can extend it, with the same name and flux, to $S\cup U_0$ as a generalized conformal minimal immersion such that
\begin{equation}\label{eq:ffull*} 
f|_{U_0}\quad \text{is full}.
\end{equation} 
Indeed it suffices to replace $\hat X|_{U_0}$ by a full conformal minimal immersion on $U_0$ that is close to $\hat X|_{U_0\cap S}$ and suitably perturb $\hat X$, by using  \cite[Lemma 3.3]{AlarconCastro-Infantes2019APDE}, on the complement of $U_0$ in a small neighborhood of $U_0\cap S$ in $S$.

Suppose now that $U\subset K$ is a component and $X|_U$ is flat (obviously $U\neq U_0$). This means that  $f(U)$ is   contained in a complex line $L=\c v$, where $v\in \Agot^{n-1}_*$. Up to a rigid motion in $\r^n$ we can suppose that 
$v=(1,\imath ,0\ldots,0)$, and hence $f=(f_1,\imath f_1,0\ldots,0)$,
where $f_1\in \Acal_\infty(U|E_0)$ vanishes nowhere on $U$ and $f_1 \theta$ is exact on $U\setminus E_0$ (see Subsec.\ \ref{ss:spaces}).
Fix $r_0\geq r+\sum_{p\in E_0\cap U} {\rm Ord}_p(f_1)$ and call  $\Delta_U=\prod_{p\in (E_0\cup \Lambda)\cap U} p^{r_0}$, where ${\rm Ord}_p(f_1)> 0$ denotes the pole order of $f_1$ at $p\in E_0\cap U$. Fix a point $q\in U\setminus E_0$. Since the complex space   $\Ocal_{\Delta_U}(U)$ (see \eqref{eq:OcalDS}) has  infinite  dimension, there is $h\in \Ocal_{\Delta_U}(U)$, $h\not\equiv 0$, satisfying the following conditions.
\begin{enumerate}[\rm (a)]
\item The $1$-forms $h^2 f_1 \theta$ and $hf_1\theta$ are exact on $U$.
\smallskip
\item $\int_{q}^p (hf_1,h^2f_1)\theta=0$ for all $p\in U\cap(E_0\cup \Lambda)$.
\smallskip
\item The functions $1, h,h^2$ are $\c$-linearly independent. 
\end{enumerate}
Set
\[
f_\zeta:=(f_1(1-\zeta^2 h^2),\imath f_1(1+\zeta^2 h^2),2 \zeta hf_1,0,\ldots,0),\quad \zeta\in \c,
\]
and define
\[
X_\zeta(p)=X(q)+\Re\int_q^p f_\zeta \theta,\quad p\in U\setminus E_0.
\]
Note that, by {\rm (a)}, $f_\zeta\theta$ is exact, and hence $X_\zeta$ is well defined and  $\Flux_{X_\zeta}=\Flux_{X|_U}=0$. Moreover, if $\zeta\neq 0$ is chosen close enough to $0\in\c$, then   $f_\zeta\in \Acal_\infty(U| E_0,\Agot^{n-1}_*)$,  $X_\zeta\in \GCMI_\infty(U|E_0,\r^n)$, $\|f_\zeta-f\|_U\approx 0$, and
  $\|X_\zeta-X\|_U\approx 0$ (see Sect.\ \ref{sec:prelim} for notations). Furthermore, $X_\zeta-X$ vanishes at least to order $r$ at every point of $(\Lambda\cup E_0)\cap U$ (see {\rm (b)} and recall that $h\in\Ocal_{\Delta_U}(U)$), and $f_\zeta(U)$ is not contained in a complex line (see {\rm (c)}). Up to replacing $X|_U$ by $X_\zeta$ in $\hat X$, and then slightly modifying $f|_\Gamma$ on a small neighborhood of $U$ preserving the smoothness and the flux map of $\hat X$ (use \cite[Lemma 3.3]{AlarconCastro-Infantes2019APDE}), we can suppose that $f(U)$ is not contained in a complex line. To finish the proof, we apply the same procedure in each component of $K$ on which $f$ is flat.
\end{proof}

Assume, as we may, that the hypotheses of Claim \ref{cl:posgen} hold. 

Denote by $m\in\z_+$ the cardinal of $E_0\cup\Lambda$. Fix a point $p_0\in \mathring K\neq\varnothing$ and let  $\{C_1,\ldots,C_l\}$, $l=m+\dim H_1(S,\z)$,  be a skeleton of $S$ based at $(p_0, E_0\cup \Lambda)$; see Definition \ref{de:skeleton}.  Write  $C=\bigcup_{j=1}^lC_j$ and denote  
\[
	\Ccal^0(C,f)=\{h\in \Ccal^0(C\setminus E_0,\Agot^{n-1}_*)\colon h-f\in\Ccal^0(C,\c^n)\}
\]
(see Subsec. \ref{ss:spaces}) and let $\Qcal=(\Qcal_1,\ldots,\Qcal_l) \colon \Ccal^0(C,f)\to (\c^n)^l$ be the period map defined by
 \begin{equation}\label{eq:Qcal-map*}
	  \Ccal^0(C,f)\ni h  \longmapsto\Qcal(h)=\Big(  \int_{C_j} (h-f)\theta \Big)_{j=1,\ldots,l}.
\end{equation} 
Fix a function $g\in \Ocal(M)$ with $[g]=\prod_{p\in E_0}p^{o(p)}$, where $o(p)=\max\{{\rm Ord}_p(f_j)\colon j=1,\ldots,n\}$; such a function exists by the classical Weierstrass theorem (on the existence of holomorphic functions with prescribed divisor on an open Riemann surface); see \cite{Florack1948}. Since $f\in \Acal_\infty(S|E_0,\Agot_*^{n-1})$, we have that 
\begin{equation}\label{eq:f0gfM}
	f_0:=g f\in \Acal(S,\Agot^{n-1}_*).
\end{equation}
Consider a family of complete holomorphic vector fields $V_1,\ldots,V_m$ on $\c^n$, vanishing at $0$, tangential to $\Agot^{n-1}_*$ along $\Agot_*^{n-1}$, and such that $\{V_1(z),\ldots,V_m(z)\}$ spans the tangent space $T_z\Agot_*^{n-1}$ for all $z\in \Agot^{n-1}_*$ (see e.g.\ \cite[Example 4.4]{AlarconForstneric2014IM}). Obviously, $m\ge n$. For each $j=1,\ldots, l$, let $\gamma_j\subset C_j\cap \mathring K\setminus (E_0\cup\Lambda\cup \{p_0\})$ be a Jordan arc satisfying  {\rm (A5)}  in Definition \ref{de:skeleton}. Since $\gamma_j$ lies in a  component of $K$ and $f$ is assumed to be flat on no component of $K$, there are pairwise distinct points  $p_{1,j},\ldots,p_{m,j}\in \gamma_j$  such that  
\begin{equation}\label{eq:baseV*}
\text{$\{V_1(f_0(p_{1,j})),\ldots,V_m(f_0(p_{m,j}))\}$ spans $\c^n$};
\end{equation}
see \eqref{eq:f0gfM} and take into account the geometry of $\Agot_*^{n-1}$. 
Also choose functions $f_{1,j},\ldots,f_{m,j}\in \Ccal^0(C,\c)$, with pairwise disjoint supports, such that $p_{i,j}$ lies in the relative interior of $\supp (f_{i,j})\subset \gamma_j$ and
\begin{equation}\label{eq:perV*}
	\int_{C_j} f_{i,j} (V_i\circ f_0) \theta \approx V_i(f_0(p_{i,j}))\quad \text{for all } i=1,\ldots,m.
\end{equation}
Set $F=(f_{1,j}\ldots,f_{m,j})_{j=1,\ldots,l}\in \Ccal^0(C,(\c^m)^l)$. Denote by $\phi_t^i$   the flow of $V_i$ over $\Agot^{n-1}_*$, $i=1,\ldots,m$. Let $\Phi_F\colon (\c^m)^l\times S\times \Agot^{n-1}_*\to \Agot^{n-1}_*$ be defined by 
\[
\Phi_F(\zeta, p, z):=(\phi^1_{\zeta_{1,1} f_{1,1}(p)} \circ \cdots \circ \phi^m_{\zeta_{m,1} f_{m,1}(p)}\circ\cdots\circ \phi^1_{\zeta_{1,l} f_{1,l}(p)} \circ \cdots \circ \phi^m_{\zeta_{m,l} f_{m,l}(p)})(z),
\]
where  $\zeta=\big((\zeta_{i,j})_{i=1,\ldots,m}\big)_{j=1,\ldots,l}$, and the spray with core $f_0$ given by
\[
\Phi_{F,f_0}\colon (\c^m)^l \times S\to \Agot^{n-1}_*,\quad \Phi_{F,f_0}(\zeta,p):=\Phi_F(\zeta, p, f_0(p)).
\]
By the choice of $f_{i,j}$, we have that $\Phi_{F,f_0}(\zeta,\cdot)/g$ is continuous on $C\setminus E_0$ and coincides with $f$ on a neighborhood of $E_0\cup\Lambda$ in $C$, namely, in $C\setminus \bigcup_{i,j}\supp (f_{i,j})$, and hence $\Phi_{F,f_0}(\zeta,\cdot)/g\in \Ccal^0(C,f)$, for all $\zeta \in (\c^m)^l$. Consider the new period map $\Qcal^*=(\Qcal^*_1,\ldots,\Qcal^*_l)\colon (\c^m)^l\to (\c^n)^l$ given by
\[
	\Qcal^*(\zeta)=\Qcal\Big(\frac{\Phi_{F,f_0}(\zeta,\cdot)}{g}\Big),\quad \zeta\in(\c^m)^l.
\]
The spray $ \Phi_{F,f_0}$ is {\em $\Qcal^*$-dominating} at $\zeta=0$ in the sense that the Jacobian matrix
\[
W:=\Big(\big(\frac{\partial \Qcal^*}{\partial \zeta_{i,j}}\Big|_{\zeta=0}\big)_{i=1,\ldots,m}\Big)_{j=1,\ldots,l}
\]
has maximal rank equal to $nl$. Indeed,  since
\[
\Big(\frac{\partial \Qcal^*_k}{\partial \zeta_{i,j}}\Big|_{\zeta=0}\Big)_{i=1,\ldots,m}=(0)_{m\times n}\quad \text{if $k\neq j$}
\]
and, by \eqref{eq:perV*},
\[
W_j:=\big(\frac{\partial \Qcal^*_j}{\partial \zeta_{i,j}}\Big|_{\zeta=0}\big)_{i=1,\ldots,m}\approx (V_i(f_0(p_{i,j}))_{i=1,\ldots,m}\quad \text{for all $j=1,\ldots,l$},
\] 
the block structure of $W$,  \eqref{eq:baseV*}, and \eqref{eq:perV*}  guarantee that  ${\rm rank} (W)=\sum_{j=1}^l{\rm rank} (W_j) =nl$, provided that the approximation in \eqref{eq:perV*} is sufficiently close. So, there is a small closed ball $V$ around the origin of $(\c^m)^l$ such that the holomorphic map
\begin{equation}\label{eq:Q*}
\text{$\Qcal^*:V\to \Qcal^*(V)$ is a submersion with $\Qcal^*(0)=\Qcal(f)=0$};
\end{equation}
take into account \eqref{eq:f0gfM}. Moreover, we choose $V$, as we may by continuity, so small that
\begin{equation}\label{eq:Cauchy-estimates}
\Phi_{F,f_0}(\zeta,\cdot)\approx f_0\quad \text{for all }\zeta\in V;
\end{equation} recall that $f_0$ is the core of $\Phi_{F,f_0}$.
 Write $f=(f_1,\ldots,f_n)$ and fix $r_0\in \n$ with
\begin{equation} \label{eq:r0*}
r_0\geq r+\sum_{p\in E_0}\sum_{i=1}^n{\rm Ord}_p(f_i),
\end{equation}
where $r\ge 0$ is the integer given in the statement of the theorem and ${\rm Ord}_p(\cdot) $ means pole order at $p\in E_0$; recall that  $f_i\in \Acal_\infty(S|E_0)$ for all $i=1,\ldots,n$. Next, since $C$ is a Runge subset of any neighborhood of $S$ (see {\rm (A4)} in Definition \ref{de:skeleton}), the classical Runge-Mergelyan theorem with jet-interpolation enables us to approximate each $f_{i,j}$ uniformly on $C$ by a function $h_{i,j}\in\Ocal(S)$ satisfying
\begin{equation}\label{eq:hjifin*}
[h_{i,j}]\geq \Delta_1:=\prod_{p\in E_0\cup \Lambda} p^{r_0+1};
\end{equation}
recall that $f_{i,j}\equiv 0$ on a neighborhood of $E_0\cup\Lambda$. 
Consider the map $H=(h_{1,j},\ldots,h_{m,j})_{j=1,\ldots,l}\in \big(\Ocal_{\Delta_1}(S)^m\big)^l$; see \eqref{eq:OcalDS}. Likewise, since $\Agot^{n-1}_*$ is an Oka manifold (see \cite[Example 4.4]{AlarconForstneric2014IM}; see Forstneri\v c \cite{Forstneric2017} for a comprehensive monograph in the subject), by \eqref{eq:f0gfM} there is  $h_0\in \Ocal(S,\Agot^{n-1}_*)$ such that
\begin{equation}\label{eq:h0f0fin*}
\text{ $h_0\approx f_0$ on $S$ and $h_0-f_0$ vanishes to order $r_0+1$ on $ E_0\cup \Lambda$.}
\end{equation}
Therefore, $\Phi_{H,h_0}\approx\Phi_{F,f_0}$ uniformly on $V\times S$, where  $\Phi_{H,h_0}\in \Ocal(V\times S, \Agot^{n-1}_*)$ is  the holomorphic spray with core $h_0$  given by
 \begin{multline*}
\Phi_{H,h_0}(\zeta, p):=
\\
\big(\phi^1_{\zeta_{1,1} h_{1,1}(p)} \circ \cdots \circ \phi^m_{\zeta_{m,1} h_{m,1}(p)}\circ\cdots\circ \phi^1_{\zeta_{1,l} h_{1,l}(p)} \circ \cdots \circ \phi^m_{\zeta_{m,l} h_{m,l}(p)}\big)(h_0(p)).
\end{multline*}
We emphasize that $\Phi_{H,h_0}(\zeta,\cdot)$ is a holomorphic function on a neighborhood of $S$; this is the key achievement for the proof. On the other hand, \eqref{eq:f0gfM}, \eqref{eq:r0*}, \eqref{eq:hjifin*}, \eqref{eq:h0f0fin*}, and \cite[Lemma 2.2]{AlarconCastro-Infantes2019APDE} ensure that
\begin{equation}\label{eq:interpofin*}
	\frac{\Phi_{H,h_0}(\zeta,\cdot)}{g}-f\in \Acal_{\Delta_2} (S)^n\quad \text{for each  $\zeta \in V$},
\end{equation}
 where $\Delta_2:=\prod_{p\in E_0\cup \Lambda} p^r$; see \eqref{eq:AcalD(S)}. In particular, $\frac{\Phi_{H,h_0}(\zeta,\cdot)}{g}\in  \Ccal^0(C,f)$ and the   period map 
\[
\hat \Qcal\colon V\to (\c^m)^l,\quad \zeta\mapsto \hat \Qcal(\zeta)= \Qcal\big(\frac{\Phi_{H,h_0}(\zeta,\cdot)}{g}\big),
\]
is well defined. If all  the  approximations are chosen close enough, then, by the Cauchy estimates, $\hat \Qcal\approx \Qcal^*$ on $V$, and there is $\zeta_0\in \mathring V$ close to the origin such that $\hat\Qcal(\zeta_0)=0$ (see \eqref{eq:Q*}), and the map 
\[
\hat f:=\frac{\Phi_{H,h_0}(\zeta_0,\cdot)}{g}\in \Ocal_\infty(S|E_0,\Agot^{n-1}_*) 
\]
satisfies the following conditions.
\begin{enumerate}[\rm (A)]
\item $\hat f-f  \in \Acal_{\Delta_2} (S)^n$; see \eqref{eq:interpofin*}.
\smallskip
\item $\hat f-f$ is exact on $S$; use that  $\Qcal (\hat f)=\hat \Qcal (\zeta_0)=0$ and \eqref{eq:Qcal-map*}.
\smallskip
\item $\hat f-f\approx 0$ on $S$; use \eqref{eq:Cauchy-estimates}.
\smallskip
\item $\hat f:S\setminus E_0\to \Agot^{n-1}_*\subset \c^n$ is full; use \eqref{eq:ffull*} and {\rm (C)}.
\end{enumerate}
It follows that the conformal minimal immersion $Y\colon S\setminus E_0\to \r^n$ given by
\[
Y(p)=X(p_0)+\Re \int_{p_0}^p \hat f \theta,\quad p\in S\setminus E_0,
\]
satisfies the conclusion of the theorem. Indeed, conditions {\rm (i)}--{\rm (iv)} follow easily from {\rm (A)}--{\rm (D)}. On the other hand, the condition $Y\in\CMI_\infty(S|E_0,\r^n)$, and in particular $Y$ is complete, is implied by {\rm (A)} and the fact that $\hat X\in\GCMI_\infty(S|E_0,\r^n)$.
\end{proof}


\section{An extension of the Behnke-Stein-Royden theorem}\label{sec:Royden}

In this section we prove a Behnke-Stein-Royden type theorem (see Theorem \ref{th:BSR}) with extra control on the divisors of the approximating functions; see conditions {\rm (ii)} and {\rm (iii)} in the following result. Recall the notation introduced in Subection \ref{ss:spaces}.

%
%
\begin{proposition}\label{pro:Royden0}
Let $\Sigma$ be a compact Riemann surface, let  $E\subset \Sigma$ be a nonempty finite subset, and let $S\subset \Sigma\setminus E$ be a Runge admissible subset.
For  any $f\in \Acal_\infty(S)$, any effective divisor $D_1\in \Div(\mathring S)$,  any real number $\delta>0$, and any $k\in \n$, there exists $\wt f\in \Ocal_\infty(\Sigma)\cap \Ocal \big( \Sigma\setminus( S\cup E)\big)$ satisfying the following conditions.
 \begin{enumerate}[\rm (i)]
\item $\wt f-f\in \Acal_{D_1}(S)$ and $\|\wt f-f\|_{S}<\delta$. 
\smallskip
\item $[\wt f|_{\Sigma\setminus (S\cup E)}]=D_0^2$ for some divisor $D_0\in \Div(\Sigma\setminus (S\cup E ))$.
\smallskip
\item $[\wt f]_\infty\geq \prod_{p\in E} p^k$.
\end{enumerate}
In particular, $\wt f\in \Ocal_\infty(\Sigma|E\cup \supp([f]_\infty))$.

\end{proposition}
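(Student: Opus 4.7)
The plan is to combine Theorem~\ref{th:BSR} with a multiplicative correction designed to achieve the parity condition (ii) on the divisor over $\Omega:=\Sigma\setminus(S\cup E)$.

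First I would invoke a Mittag--Leffler-enhanced version of Theorem~\ref{th:BSR} in which the principal parts at each point of $E$ are additionally prescribed; this enhancement is standard, obtained by combining Theorem~\ref{th:BSR} with the existence (via Riemann--Roch on $\Sigma$) of a meromorphic function having prescribed principal parts at $E$, then absorbing the error on $S$ into the approximation tolerance using the Runge property of $S$ in $\Sigma\setminus E$. Applied to $f$ with interpolation divisor $D_1$ and principal parts of pole order $k$ at each $e\in E$, this produces $g\in\Ocal_\infty(\Sigma)\cap\Ocal(\Sigma\setminus(S\cup E))$ with $\|g-f\|_S<\delta/2$, $g-f\in\Acal_{D_1}(S)$, and $[g]_\infty\ge \prod_{e\in E}e^k$. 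So $g$ already satisfies (i) and (iii), and the only remaining task is to upgrade it so that (ii) holds.

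Write $[g|_\Omega]=\Delta\cdot R^2$, where $\Delta$ is the squarefree divisor of the (finitely many) points where $[g|_\Omega]$ has odd multiplicity. My aim is to find $\phi\in\Ocal_\infty(\Sigma)\cap\Ocal(\Sigma\setminus(S\cup E))$ with (a) $\phi-1$ small on $S$ and vanishing to sufficiently high order at $\supp(D_1)\cup\supp([f]_\infty|_S)$ to preserve (i) and (iii) for the product $\wt f:=g\phi$, and (b) $[\phi|_\Omega]=\Delta\cdot D^2$ for some effective $D\in\Div(\Omega)$. Then $[\wt f|_\Omega]=\Delta^2\cdot R^2\cdot D^2=(\Delta\cdot R\cdot D)^2$ is a perfect square, verifying (ii), while the high-order tangency ensures that the product $g\phi$ stays within distance $\delta$ of $f$ on $S$ even at the poles of $f$ in $S$.

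To construct $\phi$ I would combine the Abel--Jacobi theorem with Theorem~\ref{th:BSR}. Since $\Omega$ is a nonempty open subset of $\Sigma$, Jacobi inversion gives surjectivity of the Abel map from effective divisors on $\Omega$ of sufficiently large degree onto the Jacobian $J(\Sigma)$; combined with the surjectivity of multiplication by $2$ on the complex torus $J(\Sigma)$ and the freedom at $E\neq\varnothing$, one can find an effective $D\in\Div(\Omega)$ disjoint from $\supp(\Delta)$ and a divisor $D_E\in\Div(E)$ such that $\Delta\cdot D^2\cdot D_E$ is principal on $\Sigma$; fix a meromorphic $\phi_1$ on $\Sigma$ with this divisor. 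The admissible set $K':=S\cup\supp(\Delta D)$ is Runge in $\Sigma\setminus E$ (adjoining finitely many isolated points of $\Omega$ to a Runge set preserves the property, since each complementary component in $\Sigma\setminus E$ still accumulates at $E$), so Theorem~\ref{th:BSR} can be applied to approximate $\phi_1^{-1}$, which is holomorphic and nonvanishing on $S$, by a meromorphic $u$ on $\Sigma$ matching $\phi_1^{-1}$ to sufficiently high order at the points of $\supp(\Delta D)$. Setting $\phi:=\phi_1\cdot u$ gives (a), and the Hurwitz theorem applied to $u$ is used to track how the zeros of $\phi$ on $\Omega$ arrange themselves, yielding (b).

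The main obstacle is ensuring that the factor $u$ constructed via Theorem~\ref{th:BSR} does not introduce new odd-multiplicity zeros of $\phi$ on $\Omega$ outside $\supp(\Delta D)$; the theorem controls values on the Runge set $K'$ but not divisors in its complement. This is the technically most demanding step, and I would handle it by an iterative argument in which the construction is applied to $\wt f$ in place of $g$, with successive corrections becoming arbitrarily small on $S$ so that by the Hurwitz theorem the location of the newly created zeros on $\Omega$ stabilizes and only even-multiplicity zeros survive in the limit; a dimension-count within the finite-dimensional spaces of meromorphic functions with prescribed divisor bounds serves as an alternative route to the same conclusion.
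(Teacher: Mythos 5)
Your plan shares the broad strategy of the paper's proof: first invoke Theorem~\ref{th:BSR} to achieve (i), then apply a multiplicative correction $\phi$ to flip the parity of the divisor at the odd-multiplicity set in $\Omega=\Sigma\setminus(S\cup E)$, also taking care of (iii). The decomposition of $[g|_\Omega]$ into a squarefree odd part $\Delta$ and a square is exactly the paper's $E_1$ construction. However, there is a genuine gap at precisely the point you flag yourself.

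The construction of $\phi:=\phi_1\cdot u$ with $u$ approximating $\phi_1^{-1}$ on the Runge set $K'$ via Theorem~\ref{th:BSR} gives you \emph{no} control over the divisor of $u$ on $\Omega\setminus K'$: $u$ may acquire zeros (of any parity) or even poles there, so $[\phi|_\Omega]$ is not governed by your design. Your two proposed remedies do not close this. The iterative ``stabilization'' argument is not a proof: Hurwitz's theorem gives local convergence of zeros on compacta, but $\Omega$ accumulates on $E$, the number of stray zeros need not be bounded along the iteration, and convergence on $S$ says nothing about parity of limiting multiplicities on the complement; there is no mechanism that forces the odd part to die out. The ``dimension count'' alternative is even less specific --- it is not clear what finite-dimensional space or what non-vanishing condition would be invoked.

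The paper resolves this with an idea that is absent from your proposal: it constructs the double cover $\pi\colon\hat R\to\Sigma$ branched exactly over $E_1$ (taking a square root of a holomorphic function with divisor $\prod_{p\in E_1}p$ that is close to $1$ on $S$), applies Theorem~\ref{th:BSR} \emph{upstairs} to produce $h\in\Ocal_\infty(\hat R)\cap\Ocal(R)$ close to $\pm 1$ on the two sheets over $S$, and replaces $h$ by its deck-antisymmetrization $(h-h\circ A)/2$. Because $h\circ A=-h$, the function $h^2$ descends to a meromorphic $h_0$ on $\Sigma$, and $[h_0|_{\Sigma\setminus E}]_0=D^2\prod_{p\in E_1}p$ \emph{automatically}, wherever the zeros of $h$ happen to land in $R\setminus\pi^{-1}(S)$. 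This is what lets the paper sidestep exactly the problem you run into: parity is guaranteed structurally by the square and the covering, not by pointwise divisor control off the Runge set. You should replace your $\phi$-construction with this branched-cover mechanism (or find a comparably structural device) before the argument can be considered complete. As a secondary remark, your $\phi_1$ necessarily has $\deg D_E<0$, so $\phi_1$ has poles at $E$; you would then also need to argue that multiplying $g$ by $\phi=\phi_1 u$ preserves the pole order bound (iii) at $E$, which is not addressed. The paper avoids this by securing (iii) in a separate final multiplicative step by $h_1^2=\bigl(1+\sum_{p\in E}F_p^m\bigr)^2$, which cannot disturb (ii).
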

\begin{proof}    
We may assume without loss of generality  that $f\in \Ocal_\infty(S)\cap \Ocal(\overline S\setminus \mathring S)$. Indeed, since $\Sigma\setminus E$ is an open Riemann surface, the classical Weierstrass theorem \cite{Florack1948} gives a function $\varphi
\in  \Ocal(\Sigma \setminus E)$ with
$[\varphi]=[f|_{\mathring S}]_\infty$; recall that $f^{-1}(\infty)$ lies in $\mathring S$ and consists of finitely many points. It turns out that $\varphi f\in \Acal(S)$ and vanishes nowhere on $\supp \big([f|_{\mathring S}]_\infty\big)$.     Mergelyan theorem with jet interpolation   then provides  $\varphi_0\in \Ocal(S)$ approximating $\varphi f $ on $S$ and satisfying $[\varphi_0-\varphi f]\geq D_1 [f|_{\mathring S}]_\infty$. If the proposition is valid for $\varphi_0/\varphi\in   \Ocal_\infty(S)\cap  \Ocal(\overline S\setminus \mathring S)$, then the solution provided for this function solves the proposition for $f$ whenever that the approximation $\varphi_0\approx \varphi f$ is close enough.
 
So, assume that $f\in \Ocal_\infty(S)\cap \Ocal(\overline S\setminus \mathring S)$.   Since $f\in\Ocal_\infty(S)$, Theorem \ref{th:BSR} gives us a function $f_0\in \Ocal_\infty(\Sigma)\cap \Ocal(\Sigma\setminus (S\cup E))$ satisfying the following conditions.
\begin{enumerate}[\rm ({P}1)]
\item $f_0-f\in\Ocal_{D_1}(S)$.
\smallskip
\item $\|f_0-f\|_{S}<\delta$.
\end{enumerate} 
It turns out that $f_0$ satisfies condition {\rm (i)}; however, it does not need to satisfy {\rm (ii)} or {\rm (iii)}. The next step in the proof is to find $h_0\in \Ocal_\infty(\Sigma| E)$ such that $h_0f_0$ satisfies {\rm (i)} and {\rm (ii)}.

If $f_0$ vanishes nowhere on $\Sigma\setminus(S\cup E)$, then it suffices to choose $h_0\equiv1$ (and $D_0=1$). Otherwise, write $[f_0|_{\Sigma\setminus(S\cup E)}] =\prod_{j=1}^s p_j^{m_j}$, where $p_1,\ldots,p_s$ are pairwise distinct points and $m_j>0$ for all $j\in\{1,\ldots,s\}$ $(s\ge 1)$, and set
\begin{equation}\label{eq:defE1}
	E_1=\{p_j\colon m_j\text{ is odd}\}\subset \Sigma\setminus(S\cup E).
\end{equation}
If $E_1=\varnothing$, then, again, it suffices to choose $h_0\equiv 1$ (and $D_0=\prod_{j=1}^s p_j^{m_j/2}$). Assume that $E_1\neq\varnothing$.
Since $\Sigma\setminus E$ is an open Riemann surface, there is $g\in  \Ocal(\Sigma \setminus E)$ with
\[
	[g]=\prod_{p\in E_1} p;
\]
see  \cite{Florack1948}.
Moreover, by a standard application of Runge's theorem for holomorphic functions into $\C\setminus\{0\}$ (an Oka manifold), we can assume in addition that
\begin{equation}\label{eq:gapprox1}
	\|g-1\|_{S}<\frac12\quad \text{ on }S.
\end{equation}
Consider the open Riemann surface
\[
	R=\{(p,u)\in (\Sigma\setminus E)\times \c\colon u^2= g(p)\}
\]
and notice that $R$ admits a canonical analytical compactification, namely, $\hat R$. By analytical continuation arguments, the  projection $\pi\colon \hat R\to \Sigma$, $\pi(p,u)=p$, is a $2$-sheeted branched covering, $R= \pi^{-1}(\Sigma\setminus E)=\hat R\setminus \pi^{-1}(E)$,  and 
\begin{equation}\label{eq:ramipi}
\text{$\pi^{-1}(E_1)$ is   the ramification set of $\pi|_R$.}
\end{equation}
Denote by $A\colon \hat R\to \hat R$ the    deck transformation of $\pi$ and observe that $A(p,u)=(p,-u)$ for all $(p,u)\in R$, hence  $\pi^{-1}(E_1)$ is   the fixed point set of $A|_R\colon R\to R$ as well. In view of \eqref{eq:gapprox1}, it turns out that
  $\pi^{-1}(S)=S^+\cup S^-$ where $S^+$ and $S^-$ are pairwise disjoint Runge compact subsets of $R$, $A(S^+)=S^-$, and $\pi|_{S^\pm}\colon S^\pm\to S$ is a biholomorphism.
 
Given $\delta'>0$, Theorem \ref{th:BSR} furnishes us with a function $h\in \Ocal_\infty(\hat R)\cap \Ocal(R)$ satisfying the following conditions.
\begin{itemize}
\item  $\|h- (\pm1)\|_{S^\pm}<\delta'$.
\smallskip
\item  $h$ has simple zeros at all points in $\pi^{-1}(E_1)$.
\smallskip
\item $[h|_{S^\pm}-(\pm 1)]\geq D_1^\pm [(f_0\circ \pi)|_{S^\pm}]_\infty$  where $D_1^\pm\in \Div(S^\pm)$ is the only divisor with $\pi(D_1^\pm)=D_1$.
\end{itemize}
Up to replacing $h$ by $(h-h\circ A)/2$  we can also assume   that $h\circ A=-h$, and hence 
\begin{equation}\label{eq:hh0}
h^2=h_0\circ \pi\quad \text{for some $h_0\in \Ocal_\infty(\Sigma| E)$.}
\end{equation}
The following conditions are satisfied.
\begin{enumerate}[\rm ({P}1)]
\item[\rm ({P}3)]   $\|h_0-1\|_{S}<(\delta')^2+2\delta'$.
\smallskip
\item[\rm ({P}4)]  $[h_0|_{\Sigma\setminus E}]_0=D^2\prod_{p\in E_1}p$ for some divisor $D\in \Div(\Sigma\setminus (S\cup E))$; see  \eqref{eq:ramipi} and \eqref{eq:hh0}.
\smallskip
\item[\rm ({P}5)] $[h_0- 1]_0\geq D_1[f_0|_S]_\infty$.
\end{enumerate}
Set $f_1=h_0f_0$. Since $f_1-f=(h_0-1)f_0+(f_0-f)$, properties {\rm (P1)} and {\rm (P5)} ensure that $f_1-f\in\Ocal_{D_1}(S)$.  Since $S$ is compact, this, {\rm (P2)}, and {\rm (P3)} guarantee that 
\begin{equation}\label{eq:f1-f}
	\|f_1-f\|_{S}<\delta
\end{equation}
provided that $\delta'>0$ is chosen sufficiently small, and hence $f_1$ satisfies condition {\rm (i)}. By the definition of $E_1$ in \eqref{eq:defE1} we have 
\[
	 [f_0|_{\Sigma\setminus(S\cup E)}]=(D')^2\prod_{p\in E_1}p,
\]
for some effective divisor $D'\in \Div\big(\Sigma\setminus(S\cup E)\big)$, and hence {\rm (P4)} gives that $[f_1|_{\Sigma\setminus (S\cup E)}] =(DD')^2\prod_{p\in E_1}p^2$. Thus, $f_1$ satisfies condition {\rm (ii)}. 

Finally, to complete the proof we shall find a function $h_1\in \Ocal_\infty(\Sigma|E)$ such that $\wt f=h_1^2f_1$ satisfies the conclusion of the proposition. By Theorem \ref{th:BSR}, for any  $p\in E$ and any $\epsilon>0$,  there is   $F_p\in \Ocal_\infty(\Sigma|\{p\})$, $F_p\not\equiv 0$,  such that  
\begin{equation}\label{eq:Hhp}
	[F_p]\geq D_1[f_1|_S]_\infty\quad \text{ and }\quad \|F_p\|_{S}<\epsilon. 
\end{equation}
In particular, $F_p$ has an effective pole at $p$.  
Choose an integer $m>k+k_0$, where $k$ is the number in the statement of the proposition and $k_0\ge 0$ is the maximum among the zero orders of $f_1$ at the points in $E$, then the function 
\[
  h_1= 1+\sum_{p\in E} F_p^m
\]
satisfies the following conditions in view of \eqref{eq:Hhp}.
\begin{enumerate}[\rm ({P}1)]
\item[\rm ({P}6)]   $\|h_1^2-1\|_{S}<l^2\epsilon^2+2l\epsilon$, where $l$ is the cardinal of $E$.
\smallskip
\item[\rm ({P}7)] $[h_1^2- 1]_0\geq D_1[f_1|_S]_\infty$.
\end{enumerate}
Reasoning as above, it is easily seen that $\wt f=h_1^2f_1$ satisfies {\rm (i)} and {\rm (ii)} provided $\epsilon>0$ is chosen sufficiently small. Since $F_p$ has an effective pole at $p$ for each $p\in E$, we have that $h_1$ has a pole of order at least $m$ at each point $p\in E$. Thus, $\wt f$ has a pole of order at least $2k+k_0\ge k$ at each point $p\in E$, and hence $\wt f$ satisfies {\rm (iii)}. This completes the proof.  
 \end{proof}


\section{Multiplicative sprays 
in $\Sgot^{n-1}_*$}\label{sec:sprays}

Let $M$ be an open  Riemann surface, and let  $S \subset  M$ be a connected, smoothly bounded compact domain.    Also let $E_0$ and $\Lambda$ be a pair of disjoint finite subsets of $\mathring S$. 
Let $n\geq 3$ be an integer, recall the hyperquadrics $\Agot_*^{n-1}$ and $\Sgot_*^{n-1}$ in $\c^n$ and the canonical biholomorphism $\Xi\colon \Agot_*^{n-1}\to \Sgot_*^{n-1}$; see \eqref{eq:nullquadric1}, \eqref{eq:nullquadric}, and \eqref{eq:Thetabiho}.  Consider a full map $f\in \Ocal_\infty(S|E_0,\Agot^{n-1}_*)$, see Definition \ref{de:full}, define
\begin{equation}\label{eq:f0u*}
	  u:=\Xi\circ f\in \Ocal_\infty(S|E_0,\Sgot_*^{n-1}),
\end{equation}
and write $u=(u_1,\ldots,u_n)$. 
Fix an integer $r\geq 0$ and fix the divisors in $\Div(\mathring K)$
\begin{equation}\label{eq:Delta*}
	\Delta=\prod_{p\in E_0\cup \Lambda} p^r \text{\quad and \quad} \Delta_0= \Delta \big(\prod_{i=1}^n [u_i]_\infty\big).
\end{equation}
We denote 
\begin{equation}\label{eq:Omegaeta}
	\Ocal_\Delta(S,u)=\{ v\in \Ocal_\infty\big(S|E_0,\Sgot^{n-1}_*\big)\colon  v_j-u_j\in \Ocal_{\Delta}(S),\, j=1,\ldots,n\},
\end{equation} 
where $v=(v_1,\ldots,v_n)$; see \eqref{eq:OcalDS}. Obviously, $u\in \Ocal_\Delta(S,u)$.
\begin{definition}\label{de:spindata}
We denote by $\Scal_2(\d)$  the space of all maps  $(s_1,s_2,s_3)\in \Ocal(\d,\Sgot^2_*)$  satisfying the following conditions.
\begin{itemize}
\item $s_j$ vanishes nowhere on $\d$ and $s_j(0)=1$ for all $j=1,2,3$.
\smallskip
\item $s_1'(0) s_2'(0)\neq 0$. 
\smallskip
\item Either $s_3\equiv 1$ or $s_3'(0)\neq 0$.
\end{itemize}
\end{definition}
\begin{remark}\label{re:spray*}
For any function $g\in\Ocal_{\Delta_0}(S)$ with $\|g\|_{S}<1$, any map $(s_1,s_2,s_3)\in \Scal_2(\d)$, and any map $v=(v_1,\ldots,v_n)\in \Ocal_\Delta(S,u)$, the map 
\[
\left((s_1\circ g) v_1, (s_2\circ g) v_2,\big(\big( s_3\circ g\big) v_i\big)_{i=3,\ldots,n} \right)\in \Ocal_\Delta(S,u).
\]
 \end{remark}

Take a point $p_0\in \mathring S$ and a skeleton $\{C_1,\ldots,C_l\}$, $l=m+\dim H_1(S,\z)$, of $S$ based at $(p_0,E_0\cup \Lambda)$; see Definition \ref{de:skeleton}.
We choose, as we may,  $\{C_1,\ldots,C_l\}$ to be Jordan arcs or curves such that $C_i\cap C_j=\{p_0\}$ for all $i\neq j\in \{1,\ldots,l\}$ and $C=\bigcup_{j=1}^l C_j\subset \mathring S$  is a strong deformation retract of $S$.
Denote 
\[
	\Ccal^0(C,u)=\{h\in \Ccal^0(C\setminus E_0,\Sgot^{n-1}_*)\colon h-u\in\Ccal^0(C,\c^n)\}.
\]
It turns out that $\Ocal_\Delta(S,u) \subset \Ccal^0(C,u)$.
Fix a  nowhere vanishing holomorphic $1$-form $\theta$  on a neighborhood of $S$ in $M$, and consider the period maps 
\begin{equation}\label{eq:Pcal-map}
	\Pcal  \colon \Ccal^0(C,u)\to (\c^n)^l,\quad \Pcal(h)= \Big( \int_{C_j} (h-u)\theta\Big)_{j=1,\ldots,l},
\end{equation}
\begin{equation}\label{eq:Pcal-map*}
	\Pcal^{1,2}  \colon \Ccal^0(C,u)\to (\c^2)^l,\quad \Pcal^{1,2}(h)= 
	 \left( 
	\begin{array}{c}
 \int_{C_j} (h_1-u_1)\theta\smallskip
	\\ 
 \int_{C_j} (h_2-u_2)\theta
	\end{array}\right)_{j=1,\ldots,l},
\end{equation}
where $h=(h_1,\ldots,h_n)\in \Ccal^0(C,u)$.

In what follows, we shall use the following notation.
Given   points $w=\big((w_{i,j})_{i=1,\ldots,n}\big)_{j=1,\ldots,l}, z=\big((z_{i,j})_{i=1,\ldots,n}\big)_{j=1,\ldots,l}\in (\c^n)^l$, we write
\[
	w\cdot z=\sum_{j=1}^l \sum_{i=1}^n  w_{i,j} z_{i,j}
\]
and denote $\rho\overline\b_{n,l}=\{\zeta \in (\c^n)^l\colon |\zeta|^2=\zeta \cdot \overline \zeta\le\rho^2\}$ for all $\rho>0$.

The main goal of this section   is to show    that we can embed the given map $u \in \Ocal_\infty\big(S|E_0,\Sgot^{n-1}_*\big)$ into some particular  period dominating spray, say of class $\Sgot_*^2$, of maps in $\Ocal_\infty\big(S|E_0,\Sgot^{n-1}_*\big)$. 
%
%
\begin{lemma}\label{le:spray}
Let $M$, $S$, $E_0$, $\Lambda$, $u$, and $\theta$ be as above.
There is  a map $h=\big((h_{i,j})_{i=1,\ldots,n}\big)_{j=1,\ldots,l}   \in  \big(\Ocal_{\Delta_0}(S)^n\big)^l$, $h\not \equiv 0$, for which   the following statement holds true.   
  \begin{enumerate}[\rm ({B}1)]
\item For any number $\rho_0\in ]0,1/\|h\|_S[$ and any map $\sgot=(s_1,s_2,s_3) \in \Scal_2(\d)$, the map   $\Psi_\sgot\colon\rho_0\overline\b_{n,l} \to  \Ocal_\infty\big(S|E_0,\Sgot^{n-1}_*\big)$ given by
\[ \Psi_\sgot(\zeta)=\left(s_1(\zeta\cdot h )u_1, s_2(\zeta\cdot h) u_2,  s_3(\zeta\cdot h)(u_i)_{i=3,\ldots,n} \right)
\]
assumes values in $ \Ocal_\Delta(S,u)$ and is period dominating at $\zeta=0$; the latter meaning  that $\Pcal \circ \Psi_\sgot\colon \rho_0\overline\b_{n,l}  \to (\c^n)^l$ satisfies 
\[
  {\rm rank}\Big( \frac{\partial (\Pcal \circ\Psi_\sgot)}{\partial \zeta}\Big|_{\zeta=0}\Big)=\left \{ \begin{matrix}nl & \text{\quad if \quad}s_3'(0)\neq 0 \text{ (i.e., $s_3\not\equiv 1$)}\smallskip
\\  2 l & \text{\quad if \quad}s_3'(0)=0 \text{ (i.e., $s_3\equiv 1$)}.
\end{matrix}\right.
\] 
\end{enumerate}
Therefore, for any $\epsilon>0$  there is $\rho\in \big]0,1/\|h\|_{S}\big[$ so small that the following conditions are satisfied.
 \begin{enumerate}[\rm ({B}2)]
 \item[\rm ({B}2)]   $\Psi_\sgot(\zeta)\colon S\setminus E_0 \to \Sgot_*^{n-1}\subset\c^n$ is full and $\|\Psi_\zeta-u\|_S<\epsilon$ for all   $ \zeta \in \rho\overline\b_{n,l}$.\hspace{-2mm}
\smallskip
 \item[\rm ({B}3)] $\Pcal \circ \Psi_\sgot\colon \rho\overline\b_{n,l}   \to (\Pcal \circ \Psi)(\rho\overline\b_{n,l})$  is a biholomorphism with $\Pcal( \Psi_\sgot(0))=0$ if $s_3\not\equiv 1$.
\smallskip
\item[\rm ({B}4)] $\Pcal^{1,2} \circ \Psi_\sgot  \colon \rho\overline\b_{n,l}   \to (\c^2)^l$  is a holomorphic submersion satisfying $\Pcal^{1,2} ( \Psi_\sgot(0))=0$ if $s_3\equiv 1$.
 \end{enumerate}
 \end{lemma}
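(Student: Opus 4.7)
The plan is to design the $h_{i,j}$ so that the Jacobian of $\Pcal\circ\Psi_\sgot$ at $\zeta=0$ is approximately block-diagonal, with each of the $l$ diagonal blocks forced to have maximal rank by localizing the values of $h_{i,j}$ to a small region in the subarc $\gamma_j\subset C_j$ singled out by property~(A5); since $\gamma_j$ is disjoint from every other $C_{j'}$, the off-diagonal period integrals will be uniformly $O(\epsilon')$, and (B2)--(B4) will then follow from (B1) by openness and the inverse function theorem.

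The construction proceeds as follows. Since $f$ is full and $\Xi$ is a $\c$-linear biholomorphism of $\c^n$, the map $u=\Xi\circ f$ is full on $S\setminus E_0$; as $S\setminus E_0$ is connected and $\gamma_j\subset\mathring S\setminus E_0$ is a Jordan arc along which $u$ is holomorphic, analytic continuation rules out $u|_{\gamma_j}$ lying in any affine hyperplane of $\c^n$. I therefore select pairwise distinct points $q_{1,j},\ldots,q_{n,j}\in\gamma_j$ with $\{u(q_{i,j})\}_{i=1}^n$ a basis of $\c^n$, and continuous bumps $\phi_{i,j}\in\Ccal^0(C)$ with pairwise disjoint supports, each $\supp\phi_{i,j}$ a tiny subarc of $\gamma_j$ around $q_{i,j}$, normalized so that
\[
\int_{C_j}\phi_{i,j}u_k\theta\approx u_k(q_{i,j}),\quad i,k=1,\ldots,n.
\]
Since $\phi_{i,j}$ vanishes on a neighborhood of $\supp\Delta_0\subset E_0\cup\Lambda$ and $C$ is Runge in $S$ by~(A4), the Mergelyan theorem with jet interpolation on open Riemann surfaces produces, for any preassigned $\epsilon'>0$, functions $h_{i,j}\in\Ocal_{\Delta_0}(S)$ with $\|h_{i,j}-\phi_{i,j}\|_C<\epsilon'$. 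Set $h=((h_{i,j}))_{i,j}\in(\Ocal_{\Delta_0}(S)^n)^l$; clearly $h\not\equiv 0$.

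To verify (B1), note that $\Psi_\sgot(0)=u$ because $s_k(0)=1$ and, using $s_k(w)=1+s_k'(0)w+O(w^2)$, one computes
\[
\frac{\partial\Psi_\sgot}{\partial\zeta_{i,j'}}\bigg|_{\zeta=0}=h_{i,j'}\cdot\bigl(s_1'(0)u_1,\,s_2'(0)u_2,\,s_3'(0)u_3,\ldots,s_3'(0)u_n\bigr).
\]
Thus the $(j,j')$-block of the Jacobian of $\Pcal\circ\Psi_\sgot$ at $\zeta=0$ equals $D\cdot\wt J_{j,j'}$, where $D=\mathrm{diag}(s_1'(0),s_2'(0),s_3'(0),\ldots,s_3'(0))$ and $\wt J_{j,j'}(k,i)=\int_{C_j}h_{i,j'}u_k\theta$. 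Property~(A5) and the disjointness of $\supp\phi_{i,j'}$ from $C_j$ for $j\neq j'$ yield $|\wt J_{j,j'}(k,i)|=O(\epsilon')$, while $\wt J_{j,j}$ is close to the invertible matrix $\bigl(u_k(q_{i,j})\bigr)_{k,i}$. When $s_3'(0)\neq 0$, $D$ is invertible and the full Jacobian is a perturbation of a block-diagonal matrix with invertible diagonal blocks, hence has rank $nl$. When $s_3'(0)=0$, every row indexed by $k\geq 3$ vanishes identically, and the surviving $2l$ rows, up to $O(\epsilon')$, form a block-diagonal matrix whose diagonal blocks are the first two rows of $\bigl(u_k(q_{i,j})\bigr)_{k,i}$ rescaled by the nonzero factors $s_1'(0),s_2'(0)$; these $2l$ rows remain $\c$-linearly independent, giving rank $2l$. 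Both conclusions are uniform in $\sgot\in\Scal_2(\d)$ once $\epsilon'$ is chosen small enough, thanks to lower semicontinuity of the rank.

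Properties (B2)--(B4) then follow readily. Remark~\ref{re:spray*}, applied to $g=\zeta\cdot h$ (which satisfies $\|g\|_S\leq\rho_0\|h\|_S<1$), guarantees $\Psi_\sgot(\zeta)\in\Ocal_\Delta(S,u)$; openness of fullness together with continuity of $\zeta\mapsto\Psi_\sgot(\zeta)$ gives (B2); (B3) follows from (B1) in the case $s_3'(0)\neq 0$ via the inverse function theorem, since source and target have the same complex dimension $nl$ and $\Pcal(\Psi_\sgot(0))=\Pcal(u)=0$; and (B4) is the analogous submersion statement into the $2l$-dimensional target $(\c^2)^l$. The chief technical difficulty is enforcing the divisor bound $[h_{i,j}]\geq\Delta_0$ simultaneously with the approximation $h_{i,j}\approx\phi_{i,j}$ on the skeleton; this is feasible precisely because $\supp\Delta_0\subset E_0\cup\Lambda$ is disjoint from the subarcs $\gamma_j$ where the bumps live, so both conditions are realized by a single application of Mergelyan's theorem with jet interpolation on the open Riemann surface.
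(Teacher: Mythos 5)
Your proof is correct and follows essentially the same strategy as the paper's: pick $n$ points on each $\gamma_j$ where $u$ spans $\C^n$ (guaranteed by fullness), place bump functions with disjoint supports there so that the $j$th diagonal block of the Jacobian is invertible while off-diagonal blocks vanish (here exactly, since $C_i\cap C_j=\{p_0\}$), Runge--Mergelyan-approximate with jet interpolation at $E_0\cup\Lambda$ to get $h\in(\Ocal_{\Delta_0}(S)^n)^l$, and conclude (B1)--(B4) by lower semicontinuity of rank and the inverse function/local submersion theorems. The only cosmetic difference is that the paper first establishes the rank conditions for the continuous core $F$ exactly and then perturbs, whereas you merge the two steps and work directly with the holomorphic approximant.
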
 
Note that $u=\Psi_\sgot(0)$ is the core map of the spray $\Psi_\sgot$.
\begin{proof} 
Assume that we have a continuous map 
\begin{equation}\label{eq:F=(F1,F2)*}
	F=\big((f_{i,j})_{i=1,\ldots,n}\big)_{j=1,\ldots,l}  \colon C\to (\c^n)^l
\end{equation}
 satisfying the following conditions for all $j\in\{1,\ldots,l\}$.
\begin{enumerate}[\rm (a)]
\item $\supp (f_{i,j})$ is connected and is contained in $C_j  \setminus(\{p_0\}\cup E_0\cup \Lambda)\subset \mathring S$ for all $i\in\{1,\ldots,n\}$.
\smallskip
\item The compact sets $\supp (f_{i,j})$, $i\in\{1,\ldots,n\}$, are pairwise disjoint.
\end{enumerate}  
Take $\rho_0\in ]0,1/\|F\|_C[$ and $\sgot=(s_1,s_2,s_3)\in \Scal_2(\d)$. Consider the map $\Psi_F\colon \rho_0\overline\b_{n,l}  \to \Ccal^0(C,u)$ given by
\[
	\Psi_F(\zeta)=\left(s_1( \zeta\cdot F )u_1, s_2(\zeta\cdot F) u_2, s_3(\zeta\cdot F) (u_i)_{i=3,\ldots,n} \right).
\]
This map assumes values in  $\Ccal^0(C,u)$  since $\Psi_F(\zeta)=u$ on a neighborhood of $E_0$ in $C$; recall that $\supp (f_{i,j})\subset \mathring S\setminus E_0 $. Furthermore, $\Psi_F$ is Frechet differentiable.

Write $\zeta=\big((\zeta_{i,j})_{i=1,\ldots,n}\big)_{j=1,\ldots,l}\in \rho_0\overline\b_{n,l}$ and 
\[
\Pcal=(\Pcal_1,\ldots,\Pcal_l)\colon\Ccal^0(C,u)\to (\c^n)^l.
\]
 The   Jacobian matrix 
\[
	T_{k,j}(F):=\frac{\partial (\Pcal_k\circ\Psi_F)}{\partial \big(\zeta_{i,j}\big)_{i=1,\ldots, n}}\bigg|_{\zeta=0},\quad k,j\in\{1,\ldots,l\},
\]
is the matrix $0_{n\times n}$ if $k\neq j$, whereas for $k=j$ we have that
\begin{equation}\label{eq:Tjj*} 
		T_{j,j}(F)=   W_j(F)  \cdot {\rm A} ,
\end{equation}
where 
\begin{equation}\label{eq:Wjmatrix}
	W_j(F)= \left(
	\begin{array}{c}
	\displaystyle \int_{C_j}f_{a,j}u_b\theta
	 	\end{array}\right)_{a,b=1\ldots,n}.
\end{equation} 
and ${\rm A}$ is the diagonal matrix of order $n$ whose diagonal entries starting in the upper left corner are $s_1'(0), s_2'(0),s_3'(0),\ldots,s_3'(0)$.
Therefore, if the condition
\begin{equation}\label{eq:rankWj*}
	\det(W_j(F)) \neq 0 \quad \text{for all $j=1,\ldots,l$} 
\end{equation} 
were satisfied, then the block diagonal square matrix of order $nl$
\[
  \frac{\partial (\Pcal \circ\Psi_F)}{\partial \zeta}\big|_{\zeta=0}=
	 \left( 
	\begin{array}{c|c|c}
	T_{1,1}(F) & \cdots & 0
	\\ 
	\hline
	\vdots & \ddots & \vdots
	\\ \hline
	0 & \cdots & T_{l,l}(F)
	\end{array}\right),
\]
would have
\begin{equation}\label{eq:rankvs}
  {\rm rank}\Big( \frac{\partial (\Pcal \circ\Psi_F)}{\partial \zeta}\big|_{\zeta=0}\Big)=nl  \text{\quad if \quad}s_3'(0)\neq 0 \text{ (i.e., $s_3\not\equiv 1$)}
\end{equation}
and
\begin{equation}\label{eq:rankvs12}
 {\rm rank}\Big( \frac{\partial (\Pcal \circ\Psi_F)}{\partial \zeta}\big|_{\zeta=0}\Big)= {\rm rank}\Big( \frac{\partial (\Pcal^{1,2} \circ\Psi_F)}{\partial \zeta}\big|_{\zeta=0}\Big)=2 l \quad \text{if  $s_3\equiv 1$}.
\end{equation}
 We now seek for a continuous map $F$ as in \eqref{eq:F=(F1,F2)*} for which \eqref{eq:rankWj*} is satisfied.  Fix for each $j\in\{1,\ldots,l\}$ pairwise distinct points $p_{1,j},\ldots,p_{n,j}$ in $C_j\setminus\{p_0\}$  such that
\begin{equation}\label{eq:maximal-rank*}
	\big\{u(p_{i,j})\colon i=1,\ldots,n\big\} \quad \text{is a basis of $\c^n$.}
\end{equation}
 The existence of such points is ensured by the fullness of $f$; note that, by \eqref{eq:f0u*} and the analyticity of $u$, the map  $u|_\beta\colon \beta\to\C^n$ is full on any Jordan arc $\beta\subset S$. Next, we consider for each $j\in\{1,\ldots,l\}$ a parameterization $C_j\colon [0,1]\to C_j\subset \mathring  S$ with $C_j(0)=p_0$ and $C_j(1)=p_j$ (here we write $p_j=p_0$ for all $j\in \{m+1,\ldots,l\}$), and denote by $t_{i,j}\in(0,1)$ the point such that $C_j(t_{i,j})=p_{i,j}$ for $i=1,\ldots,n$. Choose a positive number $\delta>0$ so small that   $[t_{i,j}-\delta,t_{i,j}+\delta]\subset (0,1)$ for all $i=1,\ldots,n$, and the arcs  $C_j([t_{i,j}-\delta,t_{i,j}+\delta])\subset \mathring S$, $i=1,\ldots,n$, are pairwise disjoint. We then choose continuous functions $f_{i,j}\colon C\to\c$ with support in $C_j([t_{i,j}-\delta,t_{i,j}+\delta])$, satisfying
\[
	\int_{C_j}f_{i,j}u\theta =\int_{t_{i,j}-\delta}^{t_{i,j}+\delta}f_{i,j}(t) u(C_j(t))\theta(C_j(t),\dot C_j(t)) \,dt
	= u(p_{i,j})
\]
for all $i$ and $j$; recall that $\theta$ vanishes nowhere on $S$.
In view of \eqref{eq:Tjj*}, \eqref{eq:Wjmatrix}, and \eqref{eq:maximal-rank*}, this shows that \eqref{eq:rankWj*} holds true for the map $F$ formed by these functions $f_{i,j}$ (see \eqref{eq:F=(F1,F2)*}), and hence \eqref{eq:rankvs} and \eqref{eq:rankvs12} hold. In particular, for any sufficiently small $0<\rho_1< 1/\|F\|_C$,  
\begin{equation}\label{eq:PsiF*}
\Pcal \circ \Psi_F\colon \rho_1\overline\b_{n,l}   \to (\Pcal \circ \Psi_F)(\rho_1\overline\b_{n,l})
\end{equation}
is a well-defined biholomorphism with $\Pcal (\Psi_F(0))=0$ if $s_3\not\equiv 1$, and  
\begin{equation}\label{eq:PsiF*12}
\Pcal^{1,2} \circ \Psi_F\colon \rho_1\overline\b_{n,l}   \to (\c^2)^l
\end{equation}
is a holomorphic submersion with $\Pcal^{1,2} (\Psi_F(0))=0$ if $s_3\equiv 1$.

Fix $r_0\in \n$ with
\begin{equation} \label{eq:r0}
r_0\geq r+\sum_{p\in E_0}\sum_{i=1}^n{\rm Ord}_p(u_i),
\end{equation}
where ${\rm Ord}_p(\cdot) $ means pole order at $p\in E_0$ and $r$ is the integer in \eqref{eq:Delta*}. Since $C$ is a strong deformation retract of $S$, the Runge-Mergelyan theorem with jet-interpolation (see e.g.\ \cite[Theorems 3.8.1 and 5.4.4]{Forstneric2017}) ensures that we may approximate each $f_{i,j}$ uniformly on $C$ by a function $h_{i,j}\in\Ocal(S)$, $h_{i,j}\not\equiv 0$, vanishing to order $r_0$ at every point of $\Lambda\cup E_0$;  observe that $f_{i,j}\equiv 0$ on a neighborhood of $\Lambda\cup E_0$. 
It follows that $h=\big((h_{i,j})_{i=1,\ldots,n}\big)_{j=1,\ldots,l}\in \big(\Ocal_{\Delta_0}(S)^n\big)^l$; see \eqref{eq:OcalDS} and \eqref{eq:Delta*}. If we define $\Psi_\sgot$ as in the statement of the lemma for this map $h$ and any positive $\rho_0<1/\|h\|_{S}$, it turns out that  $\Psi_\sgot(\zeta)\in\Ocal_\Delta(S,u)$ for all $\zeta\in \rho_0\overline\b_{n,l}$; see Remark \ref{re:spray*}. Assuming that  $h$ is close enough to $F$ on $C$ and chossing $\rho<\min (\rho_1,1/\|h\|_{S})$ sufficiently small, then {\rm (B1)} and {\rm (B2)}  hold; see \eqref{eq:Wjmatrix}, \eqref{eq:rankvs}, and  \eqref{eq:rankvs12} and recall that $u=\Psi_\sgot(0)$ is full. Furthermore,  in view of   \eqref{eq:PsiF*} and  \eqref{eq:PsiF*12},    {\rm (B3)} and  {\rm (B4)} are satisfied as well provided that $\rho<\min \{\rho_1,1/\|h\|_{S}\}$ is chosen small enough.
 \end{proof}


\section{Runge's theorem for complete minimal surfaces of finite total curvature}\label{sec:Runge}

We now prove the following more precise version of Theorem \ref{th:intro1}. Recall the notation in Section \ref{sec:minimal}; in particular, see \eqref{eq:GCCMI} and \eqref{eq:CCMI}.

\begin{theorem}\label{th:MT} 
Let $\Sigma$ be a compact Riemann surface (without boundary), $E\subset \Sigma$ be a nonempty finite subset, and $S=K\cup\Gamma\subset \Sigma\setminus E$ be an admissible subset (see Definition \ref{def:admissible}) that is Runge in $\Sigma\setminus E$. Also let $E_0, \Lambda$ be a pair of disjoint finite subsets of $\mathring S$ and let $n\geq 3$ be an integer.

For any  $\hat X=(X,f\theta)\in \GCMI_\infty(S|E_0,\R^n)$, any group homomorphism $\pgot\colon H_1(\Sigma\setminus (E_0\cup E),\z)\to \r^n$ with $\pgot|_{H_1(S\setminus E_0,\z)}=\Flux_{\hat X}$, any number $\epsilon>0$, and any integer $r\geq 0$, there is a conformal minimal immersion $Y\colon \Sigma\setminus (E_0\cup E)\to \r^n$ satisfying the following conditions.
\begin{enumerate}[{\rm (i)}]
\item $Y$ is complete and of finite total curvature.
\smallskip
\item $Y-X$ extends to $S$ as a continuous map and $\|Y-X\|_{S}<\epsilon$.
\smallskip
\item $Y-X$ vanishes at least to order $r$ at every point of $\Lambda\cup E_0$.
\smallskip
\item $\Flux_Y=\pgot$.
\end{enumerate}
\end{theorem}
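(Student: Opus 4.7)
The proof combines the Mergelyan theorem for complete minimal surfaces of finite total curvature (Theorem~\ref{th:Merg}), the divisor-controlled Runge-Royden approximation (Proposition~\ref{pro:Royden0}), and the period-dominating multiplicative spray (Lemma~\ref{le:spray}). First, we reduce to a model situation. By attaching to $S$ finitely many smooth Jordan arcs and closed curves inside $\Sigma\setminus(E\cup E_0)$ and extending $\hat X$ across them (using \cite[Lemma 3.3]{AlarconCastro-Infantes2019APDE}) with the boundary data along the new cycles chosen so that their flux matches $\pgot$, we may enlarge $S$ so that the inclusion $S\setminus E_0\hookrightarrow\Sigma\setminus(E\cup E_0)$ induces an isomorphism in $H_1$ and the flux of the extended $\hat X$ agrees with $\pgot$ on all of $H_1(\Sigma\setminus(E\cup E_0),\z)$. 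Applying Theorem~\ref{th:Merg} then replaces this enlarged generalized immersion by a true full complete conformal minimal immersion of finite total curvature on a neighborhood of $S\setminus E_0$; after a slight thickening, we may further assume that $S$ is a smoothly bounded Runge compact domain in $\Sigma\setminus E$ and $X$ is a true full conformal minimal immersion on a neighborhood of $S\setminus E_0$.

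Fix a nowhere vanishing holomorphic $1$-form $\theta$ on $\Sigma\setminus E$ (Florack's theorem \cite{Florack1948}) and set $u=\Xi\circ(\di X/\theta)=(u_1,\ldots,u_n)\in\Ocal_\infty(S|E_0,\Sgot_*^{n-1})$, so that $u_1u_2=\sum_{j=3}^n u_j^2$. Choose a skeleton of $S$ based at $(p_0,E_0\cup\Lambda)$ for some $p_0\in\mathring S\setminus(E_0\cup\Lambda)$, and apply Lemma~\ref{le:spray} with $\sgot=(s_1,s_2,s_3)\in\Scal_2(\d)$ satisfying $s_3\not\equiv 1$ to obtain the period-dominating multiplicative spray $\Psi_\sgot(\zeta)=(v_{1,\zeta},\ldots,v_{n,\zeta})\in\Ocal_\Delta(S,u)$ for $\zeta$ in a small closed ball $\rho\overline{\b}_{n,l}\subset(\c^n)^l$. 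The plan is to construct a holomorphic family $\zeta\mapsto\hat u_\zeta$ of meromorphic maps $\Sigma\to\Sgot_*^{n-1}$ approximating $\Psi_\sgot(\zeta)$ on $S$, and then solve for the parameter $\zeta_0$ that zeroes out the period obstruction.

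To globalize, we first use Proposition~\ref{pro:Royden0} to approximate $v_{1,\zeta}$ (the $\zeta$-dependence being built into the multiplicative structure of the spray, whose coefficient tuple $h\in(\Ocal_{\Delta_0}(S)^n)^l$ can itself be globalized holomorphically on $\Sigma\setminus E$) by $\hat v_{1,\zeta}\in\Ocal_\infty(\Sigma)$, holomorphic on $\Sigma\setminus(S\cup E)$, with $\hat v_{1,\zeta}-v_{1,\zeta}\in\Acal_\Delta(S)$ arbitrarily small, with effective poles of very high prescribed order at each point of $E$, and, crucially, with $[\hat v_{1,\zeta}]_0|_{\Sigma\setminus(S\cup E)}=D_0^2$ for some effective divisor $D_0$. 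Next, for each $j=3,\ldots,n$ we apply Theorem~\ref{th:BSR} with jet-interpolation to $v_{j,\zeta}$ to obtain $\hat v_{j,\zeta}\in\Ocal_\infty(\Sigma)$ with the usual approximation and jet-interpolation conditions on $S$ and on $\Lambda\cup E_0$, subject to the \emph{additional} requirement that $\hat v_{j,\zeta}$ vanish on $\supp(D_0)$ to at least the order prescribed by $D_0$. Setting
\[
	\hat v_{2,\zeta}=\frac{\sum_{j=3}^n \hat v_{j,\zeta}^2}{\hat v_{1,\zeta}},
\]
the tuple $\hat u_\zeta=(\hat v_{1,\zeta},\ldots,\hat v_{n,\zeta})$ is a meromorphic map $\Sigma\to\Sgot_*^{n-1}$, holomorphic and nonvanishing on $\Sigma\setminus(E\cup E_0)$. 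Indeed, on $S$ the identity $v_{1,\zeta}v_{2,\zeta}=\sum v_{j,\zeta}^2$ together with the closeness of the approximations and Hurwitz's theorem ensures that the numerator has the required vanishing order at each zero of $\hat v_{1,\zeta}$; on $\Sigma\setminus(S\cup E)$, the square structure of $[\hat v_{1,\zeta}]_0$ combined with the interpolation imposed on $\hat v_{j,\zeta}$ forces $\sum\hat v_{j,\zeta}^2$ to vanish to exactly twice the order of $D_0$, so $\hat v_{2,\zeta}$ is holomorphic and nonvanishing on $\supp(D_0)$, whence the tuple $\hat u_\zeta$ never reaches $0\in\c^n$.

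For the period matching, consider $\hat\Pcal(\zeta)=\Pcal(\hat u_\zeta)$; by the Cauchy estimates $\hat\Pcal$ is close to $\Pcal\circ\Psi_\sgot$ on $\rho\overline{\b}_{n,l}$, and by Lemma~\ref{le:spray}(B3) the latter is a biholomorphism onto a neighborhood of $0$ sending $0$ to $0$, so there exists $\zeta_0$ near $0$ with $\hat\Pcal(\zeta_0)=0$. Consequently $\Xi^{-1}(\hat u_{\zeta_0})\theta$ has the same periods as $\di X$ along every cycle in the skeleton, and the formula
\[
	Y(p)=X(p_0)+\Re\int_{p_0}^p 2\,\Xi^{-1}(\hat u_{\zeta_0})\,\theta,\qquad p\in\Sigma\setminus(E\cup E_0),
\]
defines a conformal minimal immersion with $\|Y-X\|_S<\epsilon$, with $Y-X$ vanishing to order at least $r$ on $\Lambda\cup E_0$, and with $\Flux_Y=\pgot$ (the last identity holds along a full set of generators by the zero-period condition on the skeleton together with the prescription from the reduction step). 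Since $\Xi^{-1}(\hat u_{\zeta_0})\theta$ is a meromorphic $1$-form on $\Sigma$ with effective poles exactly at $E\cup E_0$ and no zeros on $\Sigma\setminus(E\cup E_0)$, the Huber-Chern-Osserman characterization yields that $Y$ is complete and of finite total curvature. The main technical obstacle is to engineer the globalized data $\hat u_\zeta$ so that it genuinely lands in $\Sgot_*^{n-1}$ (not just in the closed quadric) and is free of branch points on $\Sigma\setminus(E\cup E_0)$ while still allowing the periods to be closed; this is resolved by the square-divisor output of Proposition~\ref{pro:Royden0}, the extra jet-interpolation of $\hat v_{j,\zeta}$ on $\supp(D_0)$, and the period-dominating character of the multiplicative spray of Lemma~\ref{le:spray}, together with a systematic use of Hurwitz's theorem to keep track of the zero sets of the approximating sequences.
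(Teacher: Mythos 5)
Your overall strategy — reduce via Theorem~\ref{th:Merg} to a full immersion on a smoothly bounded Runge domain that realizes the prescribed flux, pass to the quadric $\Sgot_*^{n-1}$ via $\Xi$, globalize the spray data componentwise using Proposition~\ref{pro:Royden0} (with its square-divisor output) for the first component, Behnke--Stein--Royden with interpolation for the last $n-2$, and then solve for the quotient $\hat v_2$, and finally close the periods via Lemma~\ref{le:spray}~(B3) — is the same route the paper takes. However, there is a genuine gap in the choice of $1$-form.

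You fix a nowhere vanishing \emph{holomorphic} $1$-form $\theta$ on $\Sigma\setminus E$ and, at the end, assert that $\Xi^{-1}(\hat u_{\zeta_0})\,\theta$ is meromorphic on $\Sigma$ with effective poles precisely at $E\cup E_0$, invoking the Huber--Chern--Osserman characterization to conclude completeness and finite total curvature. But a nowhere vanishing holomorphic $1$-form on the open Riemann surface $\Sigma\setminus E$, while it always exists, does \emph{not} extend meromorphically across $E$ in general. For a meromorphic $1$-form on $\Sigma$ whose divisor is supported on $E$ (hence holomorphic and nonvanishing on $\Sigma\setminus E$), one would need the canonical class of $\Sigma$ to be represented by a divisor supported on the prescribed finite set $E$; for $g(\Sigma)\ge 2$ and, say, $E$ a single non-Weierstrass point this is impossible, and more generally it fails for arbitrary $E$. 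If $\theta$ is only holomorphic on $\Sigma\setminus E$ with no meromorphic extension, then $\Xi^{-1}(\hat u_{\zeta_0})\,\theta$ need not be meromorphic at $E$, and both the finite total curvature and the completeness of $Y$ (which require $\partial Y$ to extend meromorphically with effective poles at $E$) are no longer ensured. This is not a cosmetic point: it is precisely why the paper works instead with a genuinely meromorphic $1$-form $\theta_0\in\Omega_\infty(\Sigma|E)$, chosen to vanish nowhere on $S$ but necessarily acquiring zeros on a set $\Theta\subset\Sigma\setminus(S\cup E)$ (Claim~\ref{eq:theta0exists} and \eqref{eq:Thetatheta0}). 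The whole subsequent bookkeeping — choosing $u_1^*$ on neighborhoods $U_q$ of $\Theta$ so that $[u_1^*|_U][\theta_0]_0=1$, then insisting via Proposition~\ref{pro:Royden0} that $\hat u_1$ keep that divisor (property (C3)), then applying (G2) to make $\hat h$ vanish on $\supp([\hat u_i\theta_0]_0)\setminus S$ — exists solely to cancel the spurious zeros of $\theta_0$ on $\Theta$, a step your proposal does not carry out because it starts from the (false, in general) premise that $\theta_0$ has no such zeros.

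As a lesser point: where the $\hat v_{j}$ ($j\ge 3$) are meant to divide cleanly, you only prescribe that they ``vanish on $\supp(D_0)$ to at least the order of $D_0$.'' That gives a lower bound on the vanishing of $\sum_j\hat v_j^2$ at $\supp(D_0)$, but to conclude that $\hat v_2=\big(\sum_j\hat v_j^2\big)/\hat v_1$ is \emph{nonvanishing} there one must pin down the vanishing order exactly. The paper accomplishes this by first fixing local extensions $v_3,\ldots,v_n$ on a neighborhood $T$ of $\supp(D_1)$ with $\sum v_j^2=\hat u_1$ on $T$ (see \eqref{eq:u3un}) and then jet-interpolating to high order from $v_j$ (conditions (D1)--(D3)); combined with Hurwitz's theorem this controls the divisor of the quotient. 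You would need to add the analogous explicit local model and high-order jet-matching, not just a lower bound on the vanishing order.
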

\begin{proof}
%
%
We begin with the following.
\begin{claim}\label{cl:conazo}
There are a Runge  admissible subset $S'=K'\cup \Gamma'\subset \Sigma\setminus  E$ (see Def.\ \ref{def:admissible}) and an immersion $\hat X'=(X',f'\theta)\in\GCMI_\infty(S'|E_0,\r^n)$ satisfying the following requirements.
\begin{enumerate}[{\rm (a)}]
\item  $S\subset S'$ and $S'$ is a  strong deformation retract of $\Sigma\setminus  E$.
\smallskip
\item $K'\neq\varnothing$ and every component of $\Gamma'$ intersects $K'$.
\smallskip
\item $K$ is a union of components of $K'$.
\smallskip
\item $X'|_{K\setminus E_0}= X|_{K\setminus E_0}$ and $\hat X'|_{S\setminus E_0} \approx \hat X$.
\smallskip
\item $\Flux_{\hat X'}=\pgot$.
\end{enumerate}
\end{claim}
%
%
\begin{proof} 
By elementary topological arguments, since $\Sigma\setminus E$ has finite topology there is a Runge admissible subset $S'=K'\cup \Gamma'\subset \Sigma\setminus  E$  satisfying {\rm (a)}, {\rm (b)}, and {\rm (c)}. Such an $S'$ can be found such that $K'\setminus K\neq \varnothing$,  $K'\setminus K$   consists of   pairwise disjoint closed discs, and every component of $K'$ intersects at most one component of $\Gamma$.  

Let $W_1$ denote the union of $S$ and all the components of $K'\setminus K$ intersecting $\Gamma$,  and notice that $S$ is a strong deformation retract of $W_1$. 
Choosing the components (closed discs) of  $(W_1\cap K')\setminus K$ sufficiently small (say, so small that $f$ is close to  a locally constant map on $(W_1\cap  K'\setminus K)\cap \Gamma$), we can extend $X|_K$ to an immersion  $\hat X_1\in \GCMI(W_1, \r^n)$ such that $\hat X_1$ is close to flat on $W_1\cap K'\setminus K$, $\hat X_1$ is close to $\hat X$ on $\Gamma$, and $\Flux_{\hat X_1}=\Flux_{\hat X}$. Indeed, we can for instance choose $\hat X_1=\hat X$ outside a small neighborhood $V$ of $W_1\cap K'\setminus K$ in $W_1$ and to be a slight modification of $\hat X$ on $V\setminus ((W_1\cap K')\setminus K)\subset\Gamma$ which ensures that $\hat X_1\in \GCMI(W_1, \r^n)$ and the condition on the flux; we use \cite[Lemma 3.3]{AlarconCastro-Infantes2019APDE} for this construction.

Next, set $W_2=W_1\cup K'$ and extend $\hat X_1$  to a generalized conformal minimal immersion $\hat X_2\in \GCMI(W_2, \r^n)\cap \CMI(K'\setminus W_1)$. Finally, we obtain an immersion  $\hat X'\in\GCMI(S', \r^n)$ satisfying conditions {\rm (d)} and {\rm (e)} by extending
  $\hat X_2$ to the arcs in $\Gamma' \setminus (\mathring W_2\cup \Gamma)$ in such a way that  $\Flux_{\hat X'}=\pgot$; we use again \cite[Lemma 3.3]{AlarconCastro-Infantes2019APDE}.  
 \end{proof}
%
%

Up to replacing $(S,\hat X)$ by $(S',\hat X')$, and then using Theorem \ref{th:Merg}, we can assume that $S$ is a strong deformation retract of $\Sigma\setminus E$, $X\in\CMI_\infty(S|E_0,\R^n)$, and 
\begin{equation}\label{eq:f}
	\text{$f=2\di X/\theta\colon S\setminus E_0\to\C^n$\quad is full.}
\end{equation}
Furthermore, since $X$ extends to a neighborhood of $S$ as a conformal minimal immersion, we can also assume without loss of generality that 
$\Gamma=\emptyset$   and $S=K$ is a connected, smoothly bounded, compact domain.

We  assume without loss of generality that the finite set $\Lambda \subset\mathring S\setminus E_0\neq\varnothing$ is nonempty and write 
\begin{equation}\label{eq:p1...pm}
	E_0\cup \Lambda=\{p_1,\ldots,p_m\}.
\end{equation}
Fix a point $p_0\in \mathring S\setminus \{p_1,\ldots,p_m\}$ and choose a skeleton  $\{C_ 1,\ldots,C_l\}$, $l=m+\dim H_1(S,\z)$, of $S$ based at $(p_0, E_0\cup \Lambda)$; see Definition \ref{de:skeleton}. We choose the skeleton, as we may since $S=K$ is connected, such that
	\[
	C_i\cap \big(\bigcup_{i\neq j=1}^l C_j\big)=\{p_0\}\quad \text{for all $i\in \{1,\ldots,l\}$.}
	\]
It turns out that $C:=\bigcup_{j=1}^lC_j$ is a Runge subset of $\Sigma\setminus E$ that is a strong deformation retract of $\Sigma\setminus E$.

%
%
Recall the following classical result; we include a proof for completeness.
\begin{claim}\label{eq:theta0exists}
There is a $1$-form $\theta_0\in\Omega_\infty(\Sigma|E)$ vanishing nowhere on $S$ and having $[\theta_0]_\infty\geq \prod_{p\in E} p$.
\end{claim}
\begin{proof}
Fix any $1$-form $\tau\in\Omega_\infty(\Sigma|E)$. By the classical Weierstrass theorem, there is $h\in\Ocal(\Sigma\setminus E)$ with $[h]=[\tau|_S]$. Set $E_1=\supp([h])\subset S$. By Proposition \ref{pro:Royden0} and Hurwitz's theorem, there exists a function $g\in\Ocal_\infty(\Sigma|E\cup E_1)$ close to $1/h$ on $S$ with $[g|_S][h]=1$ and $[g \tau]_\infty\geq \prod_{p\in E} p$. It suffices to choose $\theta_0=g\tau$.
\end{proof}
 
Fix $\theta_0\in\Omega_\infty(\Sigma|E)$ as in Claim \ref{eq:theta0exists}. 
Define 
\begin{equation}\label{eq:f0u}
	f_0:=f \theta/\theta_0\in \Ocal_\infty(S|E_0,\Agot_*^{n-1}),\quad u:=\Xi\circ f_0\in \Ocal_\infty(S|E_0,\Sgot_*^{n-1}),
\end{equation}
and write $u=(u_1,\ldots,u_n)$; see \eqref{eq:f} and \eqref{eq:Thetabiho}.
Set
\begin{equation}\label{eq:Delta}
	\Delta=\prod_{p\in E_0\cup \Lambda} p^r \text{\quad and \quad} \Delta_0= \Delta \big(\prod_{i=1}^n [u_i]_\infty\big).
\end{equation}
(Here, $r$ is the integer given in the statement of Theorem \ref{th:MT}.) 
Denote
\begin{equation}\label{eq:Thetatheta0}
	\Theta: =\supp([\theta_0]_0)\subset \Sigma\setminus (S\cup E).
\end{equation}

The next stage in the proof is to approximate $u$, uniformly on $S$, by a certain map $\hat u=(\hat u_1,\ldots,\hat u_n)\in \Ocal_\infty(\Sigma|E\cup E_0\cup \Theta,\Sgot^{n-1}_*)$ such that $\hat u \theta_0$ vanishes nowhere on $\Sigma\setminus (E_0\cup E)$. For, we proceed in two steps: we first approximate $u_1$ by a function $\hat u_1\in \Ocal_\infty(\Sigma|E\cup E_0\cup \Theta)$ and, after that, we approximate $(u_3,\ldots,u_n)$ by a suitable map $(\hat u_3,\ldots,\hat u_n)\in\Ocal_\infty(\Sigma|E\cup E_0\cup \Theta)^{n-2}$; the function $\hat u_2\in\Ocal_\infty(\Sigma|E\cup E_0 \cup \Theta)$ approximating $u_2$ will then come forced by the requirement that $\hat u$ assumes values in $\Sgot^{n-1}_*$.

We begin with some preparations. We assume, as we may up to slightly enlarging $S$ if necessary, that $u_i$ vanishes nowhere on $bS=S\setminus \mathring S$ for all $i\in\{1,\ldots,n\}$; recall that $u_i\in\Ocal_\infty(S|E_0)$.  Consider the following effective divisor
\begin{equation}\label{eq:masterdivisor}
Z= \big(\prod_{i=1}^n [u_i]^2_0 [u_i ]^2_\infty\big)\in \Div(S)
\end{equation}
whose support lies in $\mathring S$.
%

%
%

Fix a number $\epsilon_0>0$ to be specified later. 

Consider the map $\sgot\colon \c\to\Sgot_*^2$ given by
\[
	\sgot:=\big((1+z)^2,(1+z/2)^2,(1+z) (1+z/2)\big);
\]  
note that $\sgot|_\d\in  \Scal_2(\d)$, see Definition \ref{de:spindata}. 
 Let $h=\big( (h_{i,j})_{i=1,\ldots,n}\big)_{j=1,\ldots,l} \in \big(\Ocal_{\Delta_0}(S)^n\big)^l$ be given by
\[
\Psi_\sgot(\zeta)=\left((1+\zeta\cdot h )^2 u_1, (1+  \zeta\cdot h/2)^2  u_2,  (1+\zeta\cdot h)(1+\zeta\cdot h/2)(u_i)_{i=3,\ldots,n} \right),
\]
 and $\rho>0$ be the objects provided by Lemma \ref{le:spray} applied to the map $u\in \Ocal_\infty(S|E_0,\Sgot^{n-1}_*)$   in \eqref{eq:f0u},  $\sgot|_{\d}$, the divisors in \eqref{eq:Delta} (compare with \eqref{eq:Delta*}), the $1$-form $\theta_0$, and the number $\epsilon_0$. (The lemma is applied on a small open neighborhood of $S$ in $M$ where $\theta_0$ vanishes nowhere.)

We shall first deal with the first component $u_1$ of $u$. For, fix a number $\epsilon_1>0$ to be specified later. 

Take pairwise disjoint closed discs  $U_q\subset \Sigma\setminus (S\cup E)$, $q\in \Theta$,  with $q\in \mathring U_q$ for all $q\in \Theta$, and call $U=\bigcup_{q\in \Theta} U_q$. Take any function $u_1^*\in \Ocal_\infty\big(S\cup U|E_0\cup \Theta\big)$ such that 
\begin{equation}\label{eq:u1*}
\text{$u_1^*|_S=u_1$\quad and\quad   $[u_1^*|_{U}] [\theta_0]_0 =1$.}
\end{equation}
By Proposition \ref{pro:Royden0} and Hurwitz's theorem, there is $\hat u_1\in \Ocal_\infty(\Sigma|E\cup E_0  \cup \Theta)$ satisfying the following conditions.
\begin{enumerate}[\rm ({C}1)]
\item $\hat u_1-u_1^*\in\Ocal(S\cup U)$ and $\|\hat u_1-u_1^*\|_{S\cup U}<\epsilon_1$.
\smallskip
\item $\hat u_1|_{S\cup U}-u_1^*\in \Ocal_{Z\Delta  [\theta_0]_0^2}(S\cup U)$, see \eqref{eq:Delta},  \eqref{eq:masterdivisor}, and \eqref{eq:OcalDS}.
\smallskip
\item $[\hat u_1|_{\Sigma\setminus E}]= D_1^2 [u_1^*]$
  for some effective divisor $D_1 \in \Div(\Sigma\setminus (S\cup U \cup E))$; in particular, $[\hat u_1|_{S\cup U}]=[u_1^*]$. By \eqref{eq:u1*}, it turns out that  $\hat u_1\theta_0\in \Omega_\infty(\Sigma|E_0\cup E)$ and  $\hat u_1\theta_0$   has no zeros on $U$.
\smallskip
\item $[\hat u_1\theta_0]_\infty \ge \prod_{p\in E}p$.
 \end{enumerate}
 Moreover, {\rm (C1)} and {\rm (C2)} give that $\big(\frac{u_1}{\hat u_1|_{S}}-1\big) u_2 \in \Ocal(S)$, and we may assume that
\begin{equation}\label{eq:C5}
	\Big\| \Big(\frac{u_1}{\hat u_1|_{S}}-1\Big) u_2 \Big\|_{S}<\epsilon_1.	
\end{equation}

Take pairwise disjoint closed discs  $T_q\subset \Sigma\setminus \big(S\cup E\cup U\big)$, $q\in \supp(D_1)$,  with $q\in \mathring T_q$ for all $q\in \supp(D_1)$, and  call $T=\bigcup_{q\in \supp(D_1)} T_q$; see {\rm (C3)}. Note that
\begin{equation}\label{eq:D1cuad}
D_1^2=[\hat u_1|_{T}]_0=[\hat u_1|_{\Sigma\setminus(S\cup E)}]_0. 
\end{equation}

We shall now deal with the last $n-2$ components of $u$. For, consider functions $v_3,\ldots,v_n$  in $\Ocal_\infty(S\cup T|E_0)$ such that 
\begin{equation}\label{eq:u3un}
\text{$v_i|_S=u_i$ for all $i=3,\ldots,n$, and }   \big(\sum_{i=3}^n v_i^2\big)\big|_{T} =  \hat u_1|_{T};
\end{equation}
note that, in view of \eqref{eq:D1cuad}, such extensions exist even for $n=3$. (One may for instance choose $v_3|_{T}=\sqrt{\hat u_1|_{T}}$ and $v_i|_{T}\equiv 0$ for all $i\geq 4$.)
Likewise, let $v_1, v_2\in \Ocal_\infty(S\cup T|E_0)$ be the functions given by 
\begin{equation}\label{eq:v1v2}
\text{$v_i|_S=u_i$,  $i=1,2$,\quad $v_1|_{T}=\hat u_1|_{T}$,\quad and\quad $v_2|_{T}\equiv1$.}
\end{equation} 

Fix $\epsilon_2$, $0<\epsilon_2<\epsilon_1$, to be specified later. 

Proposition \ref{pro:Royden0} and Hurwitz's theorem provide a map $(\hat u_3,\ldots,\hat u_n)\in \Ocal_\infty(\Sigma|E\cup E_0)^{n-2}$ satisfying the following conditions for all $i=3,\ldots,n$.
\begin{enumerate}[\rm ({D}1)]
\item $\hat u_i-v_i\in\Ocal(S\cup T)$ and $\|\hat u_i-v_i\|_{S\cup T}<\epsilon_2$. 
\smallskip
\item $\hat u_i|_{S\cup T}-v_i\in \Ocal_{Z\Delta D_1^4}(S\cup T)$.
\smallskip
\item   $[\hat u_i|_{S}]=[u_i]$; see \eqref{eq:v1v2}.
\end{enumerate}

%
%
We claim that if $\epsilon_2>0$ is chosen sufficiently small (in terms of the fixed but yet unspecified number $\epsilon_1>0$), then the function 
\[
	\hat u_2:=\frac{\sum_{i=3}^n \hat u_i^2}{\hat u_1}
\] 
satisfies the following conditions.
\begin{enumerate}[\rm ({E}1)]
\item $\hat u_2\in  \Ocal_\infty(\Sigma|E_0\cup E)$.
\smallskip
\item  $\hat u_2-v_2\in\Ocal(S\cup T)$ and $\|\hat u_2-v_2\|_{S\cup T}<\epsilon_1$.
\smallskip
\item  $\hat u_2|_{S\cup T}-v_2\in \Ocal_{\Delta[u_2]_0}(S\cup T)$.
\smallskip
\item  $[\hat u_2|_{S\cup T}]=[\hat u_2|_{S}]=[v_2]=[u_2]$.
\end{enumerate}
Indeed,
we shall first check {\rm (E1)}.
Properties \eqref{eq:u3un}, {\rm (D2)}, and {\rm (D3)}  ensure for each $i=3,\ldots,n$ that 
 \begin{eqnarray*}
 \big[\hat u_i^2\big|_{S\cup T}-v_i^2\big] & = &  \big[ \hat u_i\big|_{S\cup T}-v_i\big]\big[ \hat u_i\big|_{S\cup T}+v_i \big]
 \\
 & \ge & Z\Delta D_1^4 [u_i]^{-1}_\infty \;\stackrel{\eqref{eq:masterdivisor}}{\ge} \; \Delta D_1^4 [u_1]_0^2[u_2]_0,
\end{eqnarray*}
and hence, by \eqref{eq:f0u}, \eqref{eq:masterdivisor}, \eqref{eq:u3un}, and \eqref{eq:v1v2},
\[
 \big[(\hat u_1 \hat u_2)\big|_{S\cup T}-v_1 v_2\big] = \big[\sum_{i=3}^n\hat u_i^2|_{S\cup T}-\sum_{i=3}^n v_i^2\big] 
  \ge   \Delta D_1^4 [u_1]_0^2[u_2]_0.
\]
In view of \eqref{eq:u1*}, {\rm (C3)}, and \eqref{eq:u3un}, we have $[\hat u_1|_{S\cup T}]_0=[u_1]_0D_1^2$, and so 
\begin{equation}\label{eq:u2div}
\Big[\hat u_2\big|_{S\cup T}- \frac{v_1v_2}{\hat u_1|_{S\cup T}}\Big]\ge  \Delta D_1^2[u_1]_0[u_2]_0.
\end{equation}
It turns out that 
\begin{equation}\label{eq:u2ST}
	\hat u_2|_{S\cup T}-\displaystyle \frac{v_1v_2}{\hat u_1|_{S\cup T}}\in  \Ocal(S\cup T).
\end{equation}
 Moreover, \eqref{eq:u1*}, \eqref{eq:v1v2}, and {\rm (C3)} ensure that $v_1/\hat u_1|_{S\cup T}\in \Ocal (S\cup T)$. Since $v_2\in\Ocal_\infty(S\cup T|E_0)$, we obtain that  $v_1v_2/\hat u_1|_{S\cup T}\in \Ocal_\infty(S\cup T|E_0)$, and so, in view of \eqref{eq:u2ST}, $\hat u_2|_{S\cup T}\in\Ocal_\infty(S\cup T|E_0)$ as well. On the other hand, since $\hat u_i\in \Ocal_\infty(\Sigma|E\cup E_0)$ 
 for all $i\ge 3$ and $\hat u_1$ vanishes nowhere off $S\cup T\cup E$ (see \eqref{eq:D1cuad}), we infer that $\hat u_2\in  \Ocal(\Sigma\setminus (S\cup T\cup E))$. This proves {\rm (E1)}.

In order to check {\rm (E2)} and {\rm (E3)}, we use  \eqref{eq:u1*}, {\rm (C2)}, {\rm (C3)}, and \eqref{eq:v1v2} to infer that
\[
 \Big[\frac{v_1}{\hat u_1|_{S\cup T}}-1\Big] \ge Z\Delta [u_1]_0^{-1} \stackrel{\eqref{eq:masterdivisor}}{\ge} \Delta [u_1]_0 [u_2]_\infty.
\]
Together with \eqref{eq:v1v2} and \eqref{eq:u2div}, we obtain that
\[
	[ \hat u_2\big|_{S\cup T}-v_2 ] = \Big[\Big(\hat u_2\big|_{S\cup T}- \frac{v_1v_2}{\hat u_1|_{S\cup T}}\Big)+v_2\Big( \frac{v_1}{\hat u_1|_{S\cup T}}-1 \Big)  \Big] \ge \Delta [u_1]_0[u_2]_0.
\]
This shows {\rm (E3)} and the first part of {\rm (E2)}; the second part of {\rm (E2)} is ensured by {\rm (D1)}, {\rm (D2)}, \eqref{eq:masterdivisor}, \eqref{eq:C5}, \eqref{eq:u3un}, and \eqref{eq:v1v2} whenever that $\epsilon_2>0$ is chosen sufficiently small. Finally, condition {\rm (E4)} follows from {\rm (E2)}, {\rm (E3)}, and Hurwitz's Theorem provided that $\epsilon_2>0$ is small enough.

Set 
\[
	\hat u:=(\hat u_1,\ldots,\hat u_n)\in \Ocal_\infty(\Sigma|E_0\cup E\cup \Theta)\times \Ocal_\infty(\Sigma|E_0\cup E)^{n-1}.
\]
Summarizing, the following conditions hold true.
\begin{enumerate}[\rm ({F}1)]
\item $\hat u \theta_0\in \Omega_\infty(\Sigma|E_0\cup E)^n$ vanishes nowhere on $\Sigma\setminus (E\cup E_0)$, and hence $\hat u\in \Ocal_\infty(\Sigma|E_0\cup E\cup \Theta,\Sgot^{n-1}_*)$.
\smallskip
\item    $\hat u-u \in\Ocal(S,\c^n)$ and $\|\hat u-u\|_{S}<\sqrt{n} \epsilon_1$.
\smallskip
\item  $\hat u_i|_S-u_i\in \Ocal_\Delta(S)$ for all $i\in\{1,\ldots,n\}$, i.e.,  $\hat u\in \Ocal_\Delta(S,u)\cap \Ocal_\infty(S|E_0)^n$.\hspace*{-1mm}
\smallskip
\item $\hat u_1\theta_0$ (and hence $\hat u\theta_0$) has an effective pole at each point in $E$.
\smallskip
\item $[\hat u_i|_S]=[u_i]$ for all $i\in\{1,\ldots,n\}$.
\end{enumerate}
Indeed, to check {\rm (F1)} recall that $\hat u_1 \theta_0$ vanishes nowhere on $\Sigma\setminus (S\cup T\cup E)$, by {\rm (C3)}. On the other hand, since $\theta_0$ has no zeros in $S\cup T$, \eqref{eq:v1v2} and {\rm (E4)} ensure that $\hat u_2\theta_0$ vanishes nowhere on $T$. Finally, since $u\theta_0$ has no zeros on $S$, {\rm (C3)}, {\rm (D3)}, and {\rm (E4)} imply that $\hat u\theta_0$ vanishes nowhere on $S$. This shows the first part of {\rm (F1)}; the second one then follows from \eqref{eq:Thetatheta0} and the definition of $\hat u_2$. On the other hand, {\rm (F2)}, {\rm (F3)}, and {\rm (F5)} follow straightforwardly from the above properties, whereas {\rm (F4)} is implied by {\rm (C4)}. 

%
%
 
We next deal with the period problem. For, we apply Proposition \ref{pro:Royden0} in order to approximate   $h$   by a map 
 \[
 	\hat h=\big( (\hat h_{i,j})_{i=1,\ldots,n}\big)_{j=1,\ldots,l}\in  \big(\big(\Ocal_{\Delta_0}(S)\cap\Ocal_\infty(\Sigma|E)\big)^n\big)^l,
\]
 satisfying the following conditions.
\begin{enumerate}[\rm ({G}1)]
\item  $\|\hat h-h\|_{S}<\epsilon_1$.
\smallskip
\item $\hat h$ vanishes everywhere on $\big(\bigcup_{i=1}^n \supp([\hat u_i\theta_0]_0)\big)\setminus S$.
\smallskip
\item $[\hat h_{i,j}]_\infty=\prod_{p\in E} p^{m_{i,j}(p)}$ for all $i=1,\ldots,n$,  $j=1,\ldots,l$, where $\{m_{i,j}(p)\colon i=1,\ldots,n,\, j=1,\ldots,l\}$ are pairwise distinct natural numbers for each $p\in E$.
\end{enumerate}
To ensure {\rm (G3)} we fix an ordering in the set $\{1,\ldots,n\}\times\{1,\ldots,l\}$ and apply Proposition \ref{pro:Royden0} recursively in order to guarantee that the map $\{1,\ldots,n\}\times\{1,\ldots,l\}\ni (i,j)\mapsto m_{i,j}(p)$ is strictly increasing for each $p\in E$.

For each $\zeta \in (\c^n)^l$, consider the function
\[
 \hat \Psi_\sgot(\zeta)=\left((1+\zeta\cdot \hat h )^2\hat u_1, (1+  \zeta\cdot \hat h/2)^2 \hat u_2,  (1+\zeta\cdot \hat h)(1+\zeta\cdot \hat h/2)(\hat u_i)_{i=3,\ldots,n} \right),
\]
which obviously lies in $\Ocal_\infty(\Sigma|E\cup E_0\cup \Theta)^n$ by {\rm (F1)}. 

In view of {\rm (F2)} and {\rm (G1)}, if $\epsilon_1>0$ is small then   $\hat \Psi_\sgot(\zeta)$ is  close   to $\Psi_\sgot(\zeta)$ on $S$ uniformly on $\zeta\in \rho\overline \b_{n,l}$, and hence $ \Pcal \circ \hat \Psi_\sgot $ is    close  to $\Pcal \circ   \Psi_\sgot$ on   $ \rho\overline \b_{n,l}$, where $\Pcal$ is the period map \eqref{eq:Pcal-map} with $\theta_0$ in the role of $\theta$; i.e.,
\begin{equation}\label{eq:Pcal-map-0}
		\Pcal  \colon \Ccal^0(C,u)\to (\c^n)^l,\quad \Pcal(h)= \Big( \int_{C_j} (h-u)\theta_0\Big)_{j=1,\ldots,l}.
\end{equation}
Thus, the following assertions  hold provided that  $\epsilon_1>0$ is  small enough (in terms, in particular, of the fixed but still to be specified number $\epsilon_0>0$).
\begin{enumerate}[\rm ({H}1)]
\item $\hat \Psi_\sgot(\zeta) \in \Ocal_\Delta(S,u)$ for all  $\zeta \in \rho\overline\b_{n,l}$; see \eqref{eq:Omegaeta}. Use  {\rm (F3)}, the fact that $\hat h\in(\Ocal_{\Delta_0}(S)^n)^l$, that we can assume that $\rho<1/\|\hat h\|_{S}$ by {\rm (G1)}, and Remark \ref{re:spray*}.
\smallskip
\item The map $\Pcal \circ \hat \Psi_\sgot\colon \rho\overline\b_{n,l}   \to (\Pcal \circ \hat \Psi_\sgot)(\rho\overline\b_{n,l})$
is a biholomorphism with $0\in (\Pcal \circ \hat \Psi_\sgot)(\rho\b_{n,l})$; use  Lemma \ref{le:spray}-{\rm (B3)}, {\rm (F1)}, {\rm (F2)}, {\rm (G1)}, {\rm (G2)},  and the Cauchy estimates.
\smallskip
\item $\hat \Psi_\sgot(\zeta) \in \Ocal_\infty(\Sigma|E\cup E_0\cup\Theta,\Sgot^{n-1}_*)$ and is full for all $ \zeta \in \rho\overline\b_{n,l}$; use  Lemma \ref{le:spray}-{\rm (B1)},{\rm (B2)}, {\rm (F2)}, and {\rm (G1)}. To check that $\hat \Psi_\sgot(\zeta)$ vanishes nowhere in $\Sigma\setminus E\cup E_0\cup\Theta$, note that neither $\hat u$ nor $\sgot(\zeta\cdot\hat h)$ vanish anywhere there and take into account {\rm (G2)}.   
\smallskip
\item $\|\hat \Psi_\sgot(\zeta) -u\|_{S}<\epsilon_0$  for all  $ \zeta \in \rho\overline\b_{n,l}$; use Lemma \ref{le:spray}-{\rm (B2)}, {\rm (F2)}, and {\rm (G1)}.
 \end{enumerate}
On the other hand, conditions {\rm (F1)} and {\rm (G2)} ensure that, for any $\zeta\in (\c^n)^l$, the vectorial $1$-form 
\begin{equation}\label{eq:421}
\hat\Psi_\sgot(\zeta)\theta_0\in \Omega_\infty(\Sigma|E\cup E_0)^n
\end{equation} 
and vanishes nowhere on $\Sigma\setminus (S\cup E)$. Thus, by {\rm ({F}1)} and {\rm ({H}1)}, we have
\begin{equation}\label{eq:F1Psi}
\hat\Psi_\sgot(\zeta)\theta_0
\text{  has no zeros on }\Sigma\setminus (E_0\cup E),\text{ for all }  \zeta \in \rho\overline\b_{n,l}.
\end{equation} 
 Furthermore,  by {\rm (F4)} and {\rm (G3)}, 
\begin{equation}\label{eq:Psithehatpole}
\text{$\hat\Psi_\sgot(\zeta)\theta_0$ has an effective pole at each point $p\in E$, for all $\zeta \in (\c^n)^l$}.
\end{equation}

Denote by $\zeta_0\in \rho\b_{n,l}$ the point such that 
\begin{equation}\label{eq:zeta0}
	\Pcal(\hat\Psi_\sgot(\zeta_0))=0
\end{equation}
(see {\rm (H2)}) and define
\[
	\hat f=(\hat f_1,\ldots,\hat f_n)=\Xi^{-1}\circ\hat\Psi_\sgot(\zeta_0)\in\Ocal_\infty(\Sigma|E\cup E_0\cup\Theta,\Agot^{n-1}_*),
\] 
where $\Xi$ is the linear biholomorphism \eqref{eq:Thetabiho}; see  {\rm (H3)}.  
The following assertions are satisfied.
\begin{enumerate}[\rm (\text{$\rm I$}1)]
\item $\hat f\theta_0\in\Omega_\infty(\Sigma|E\cup E_0)^n$ and vanishes nowhere on $\Sigma\setminus (E_0\cup E)$, see \eqref{eq:421} and \eqref{eq:F1Psi}.
\smallskip
\item $\hat f\theta_0$ has an effective pole at each point $p\in E$, see  \eqref{eq:Psithehatpole}.
\smallskip
\item $[(\hat f_i\theta_0)|_S-f_i\theta]\ge \Delta$ for all $i\in\{1,\ldots,n\}$, where the map $f=(f_1,\ldots,f_n)$ and the $1$-form $\theta$ are those given in the statement of Theorem \ref{th:MT}; see  \eqref{eq:f0u} and  {\rm (H1)}. In particular, for each $p\in E_0$ there is $i\in\{1,\ldots,n\}$ such that $\hat f_i\theta_0$ has an effective pole at $p$. 
\smallskip
\item $\int_{C_j}(\hat f\theta_0-f\theta)=0$ for all $j\in\{1,\ldots,l\}$, and hence $\hat f\theta_0-f\theta$ is exact on $S$; see \eqref{eq:f0u}, \eqref{eq:Pcal-map-0}, and \eqref{eq:zeta0}.
\smallskip
\item $\|\hat f-f\theta/\theta_0\|_{S}<2 \epsilon_0$; use \eqref{eq:Thetabiho}, \eqref{eq:f0u}, and {\rm (H4)}.
\end{enumerate}

The proof of the theorem is now completed as follows. Since  $S$ is a strong deformation retract of $\Sigma\setminus E$ and $\{C_1,\ldots,C_l\}$ is an skeleton of $S$ based at $(p_0,E_0\cup \Lambda)$ (see Definition \ref{de:skeleton}), \eqref{eq:Pcal-map-0} and {\rm (I1)}--{\rm (I4)} ensure that  
\[
	Y\colon \Sigma\setminus (E_0\cup E)\to \r^n,\quad Y(p)=\Re\int_{p_0}^p \hat f \theta_0
\]
is a well defined complete conformal minimal immersion of FTC with $\Flux_Y=\Flux_X=\pgot$. Moreover,   \eqref{eq:Pcal-map-0}, {\rm (I3)}, {\rm (I4)}, and \eqref{eq:Delta} guarantee that 
$Y-X$ vanishes at least to order $r$ at every point of $\Lambda\cup E_0$. Finally,  {\rm (I5)} and the compactness of $S$ ensure that  $\|Y-X\|_{S}<\epsilon$ provided that $\epsilon_0$ is chosen small enough from the beginning. This concludes the proof of the theorem.
\end{proof}


\section{Mittag-Leffler's theorem for minimal surfaces}\label{sec:Mittag-Leffler}

In this section we prove the following Mittag-Leffler type theorem for conformal minimal surfaces, including approximation and interpolation, which is a more precise version of Theorem \ref{th:intro2}.
\begin{theorem}\label{th:M-Lgen}
Let $M$ be an open Riemann surface, $ A\subset M$ be a  closed discrete subset,  $U\subset M$ be a locally connected, closed neighborhood of $A$ whose connected components are all Runge admissible compact subsets in $M$, and  $X:U\setminus A\to\r^n$ $(n\ge 3)$  be a map such that $X|_{W\setminus A}\in \GCMI_\infty(W| A,\r^n)$ for all components $W$ of $U$. 
Then for any $\Lambda\subset \mathring U\setminus A$  that is closed and discrete as subset of $M$,  any map $r:A\cup \Lambda\to\n$,  and  any group morphism 
$\pgot\colon H_1(M\setminus A,\z)\to \r^n$  with $\pgot|_{H_1(U\setminus A,\z)}=\Flux_X$,
 there is a full conformal minimal immersion $Y:M\setminus A\to\r^n$ satisfying the following conditions.
 \begin{enumerate}[{\rm (i)}]
 \item  $Y-X$ is harmonic at every point of $A$.
 \smallskip
 \item $Y-X$ vanishes at least to order $r(p)$ at each point $p\in A\cup\Lambda$.
 \smallskip
 \item $\Flux_Y=\pgot$.
 \smallskip
 \item $\|Y-X\|_U<\epsilon$ for any given $\epsilon>0$.
\end{enumerate}
Moreover, the immersion $Y$ can be chosen complete.
\end{theorem}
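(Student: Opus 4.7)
The plan is to apply Theorem \ref{th:MT} iteratively along an exhaustion of $M$, combining it with a standard labyrinth construction to secure completeness and careful bookkeeping of vanishing orders to secure harmonicity of $Y-X$ at $A$ in the limit. First, I would choose a nested sequence of smoothly bounded Runge compact admissible domains $K_1\subset K_2\subset\cdots$ in $M$ with $K_n\subset\mathring K_{n+1}$ and $\bigcup_n K_n=M$, arranged so that each $K_n\cap U$ is a union of components of $U$, so that $A_n:=A\cap K_n$ and $\Lambda_n:=\Lambda\cap K_n$ are finite, and so that any fixed generating system of $H_1(M\setminus A,\z)$ is eventually captured. Since each $K_n$ has finite topology, a standard compactification provides a compact Riemann surface $\Sigma_n$ and a finite set $E_n\subset\Sigma_n$ such that a neighborhood of $K_{n+1}$ embeds as a Runge admissible subset of $\Sigma_n\setminus E_n$, so that Theorem \ref{th:MT} applies at each stage. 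The base case $Y_0\in\CMI_\infty(K_0|A_0,\r^n)$ would come from applying Theorem \ref{th:Merg} to $X|_{K_0\setminus A}$, with $K_0$ chosen so that $K_0\cap U$ is a union of components of $U$ containing enough of $A$ and $\Lambda$ to anchor the recursion.

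Inductively, given $Y_n\in\CMI_\infty(K_n|A_n,\r^n)$, I would build $Y_{n+1}$ as follows. To force completeness, I would first choose a Jorge-Xavier type labyrinth $\Gamma_n\subset\mathring K_{n+1}\setminus(K_n\cup A\cup\Lambda)$: a finite union of smooth arcs lying in a thin collar of $bK_n$ arranged so that every divergent path in $M\setminus A$ either tends to $A$ or must meet infinitely many of the $\Gamma_n$. Then I would extend $Y_n$ to a generalized FTC immersion $\hat Y_n$ on $K_n\cup\Gamma_n$, using \cite[Lemma 3.3]{AlarconCastro-Infantes2019APDE}, so that the image of each component of $\Gamma_n$ under $\hat Y_n$ has Euclidean length at least $1$ and $\Flux_{\hat Y_n}$ agrees with the restriction of the prescribed homomorphism $\pgot$. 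Applying Theorem \ref{th:MT} inside $\Sigma_n\setminus E_n$ to the admissible set $S=K_n\cup\Gamma_n$, with $E_0=A_n$, $\Lambda_n$ in place of $\Lambda$, tolerance $\epsilon_n>0$ satisfying $\sum_n\epsilon_n<\epsilon$, vanishing order $r_n\ge n+\max_{p\in A_n\cup\Lambda_n}r(p)$, and flux datum a homomorphism $\pgot_n\colon H_1(\Sigma_n\setminus(E_n\cup A_{n+1}),\z)\to\r^n$ extending $\Flux_{\hat Y_n}$ and agreeing with $\pgot$ on loops contained in $K_{n+1}\setminus A_{n+1}$, delivers $Y_{n+1}$.

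Taking the tolerances $\epsilon_n$ to decay quickly enough that Hurwitz's theorem applies to the Weierstrass data at each stage, $\{Y_n\}$ converges uniformly on compact subsets of $M\setminus A$ to a conformal minimal immersion $Y\colon M\setminus A\to\r^n$; fullness of $Y$ is inherited from the fullness of $Y_0$ upon restriction to $K_0$. The approximation, interpolation, and flux conclusions (ii)--(iv) follow by summing the estimates and by the stagewise flux compatibility. For (i), the vanishing-order bookkeeping gives that $Y-Y_n$ is harmonic at every point of $A_n$, and since $Y_n-X$ is already harmonic at every point of $A_n$ by the construction of the base case and of each subsequent correction, $Y-X$ is harmonic at every point of $A$. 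Completeness holds because any divergent path in $M\setminus A$ either tends to $A$, where $Y-X$ is bounded harmonic and $X$ is complete by hypothesis, or eventually escapes every $K_n$ and thus crosses infinitely many $\Gamma_n$, each contributing at least $1/2$ to the intrinsic length of $Y$ by the approximation estimate. The main technical obstacle I expect is coordinating the recursive choices of $\epsilon_n$, $r_n$, $\Gamma_n$, and $\hat Y_n$ so that the labyrinth length estimate, the flux compatibility, the jet interpolation at $A\cup\Lambda$, and the fullness all persist in the limit; once Theorem \ref{th:MT} is available to handle each single step, the remainder is bookkeeping.
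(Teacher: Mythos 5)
Your overall scheme --- exhaust $M$ by Runge compacts, compactify each one so that Theorem \ref{th:MT} applies, add a labyrinth in each collar to force completeness, and control vanishing orders so that harmonicity of $Y-X$ at $A$ survives the limit --- is exactly the scheme the paper follows. But the completeness mechanism you propose contains a genuine error, and this is precisely the point at which the paper is forced to introduce the extra piece of technology in Lemma \ref{lem:coorfij}.

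You claim completeness because any divergent path escaping every $K_n$ ``crosses infinitely many $\Gamma_n$, each contributing at least $1/2$ to the intrinsic length of $Y$,'' where the $\Gamma_n$ are arcs whose \emph{images} under $\hat Y_n$ have Euclidean length $\ge 1$. This does not work: a path crossing a Jordan arc transversally meets it in a point (or a short subarc) and accrues essentially zero intrinsic length from that crossing, regardless of how long the image of the whole arc is. Making the image of each labyrinth component long is the wrong invariant. Moreover, you do not address at all the paths in the collar that simply \emph{avoid} the labyrinth; Theorem \ref{th:MT} alone gives no lower bound on $\int_\gamma|\partial Y_{n+1}|$ for such $\gamma$, because it is free to shrink the metric wherever it pleases away from $S$.

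The paper resolves both issues together, and in a way that is not ``just bookkeeping.'' It first shows (Lemma \ref{lem:coorfij}, via the multiplicative sprays of Section \ref{sec:sprays} with $s_3\equiv 1$) that one can run the period-dominating approximation while keeping the coordinates $3,\ldots,n$ of the Weierstrass data \emph{exactly} fixed, only perturbing coordinates $1,2$. This allows two things: (a) the labyrinth $K$ is chosen, in the classical Jorge--Xavier manner for the fixed function $Y_{j,n}$, so that every path in the collar avoiding $K$ already satisfies $\int_\gamma |\partial Y_{j,n}|>1$, and this estimate is preserved verbatim since $X_{j,n}=Y_{j,n}$; and (b) on $K$ itself the first coordinate is translated by a large constant $T$, so that any path hitting $K$ has large \emph{extrinsic} (hence intrinsic) distance from $bM_j''$. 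Neither the coordinate-fixing step nor the translation trick appears in your proposal, and without them the inductive distance estimate $\dist_{X_j}(p_0,bM_j)>j$ is not established. You would need to supply something equivalent to Lemma \ref{lem:coorfij} (or else a metric-stretching argument compatible with the FTC constraint, which is far from automatic) before the recursion closes.
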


The proof of the theorem uses Theorem \ref{th:MT} and the following ad hoc technical lemma. The lemma is needed only to ensure the completeness of the conformal minimal immersion $Y\colon M\setminus A\to\r^n$ in Theorem \ref{th:M-Lgen}.
\begin{lemma}\label{lem:coorfij}
Let $M$ be an open Riemann surface and let $S=K\cup\Gamma\subset M$ be a Runge connected admissible subset. Assume that there is a component $K_0$ of $K$ that is a strong deformation retract of $M$. Let $E_0$ and $\Lambda$ be a pair of disjoint finite subsets of $\mathring K_0$, let $n\geq 3$ be an integer, and let   $X=(X_1,\ldots,X_n)\in \GCMI_\infty(S|E_0,\R^n)\cap \CMI_\infty(K|E_0,\R^n)$ be a map  such that
\begin{itemize}
\item $X|_{K_0}$ is full (see Definition \ref{def:full-CMI}).
\smallskip
\item $X_j$ extends  to $M\setminus E_0$ as a harmonic function,  $j\in \{3,\ldots,n\}$, and 
\smallskip
\item $\partial X_1^2+\partial X_2^2$ 
vanishes nowhere on $\Gamma$.
\end{itemize} 
 For any number $\epsilon>0$, any integer $r\geq 0$, and any smoothly bounded Runge compact domain $W\subset M$ with $S\subset \mathring W$,  there is a conformal minimal immersion $Y=(Y_1,\ldots,Y_n)\in \CMI_\infty(W|E_0,\R^n)$ satisfying the following conditions.
\begin{enumerate}[{\rm (i)}]
\item $Y$ is full.
\smallskip
 \item $Y-X$ extends to $S$ as a continuous map and $\|Y-X\|_S<\epsilon$.
\smallskip
\item $Y-X$ vanishes at least to order $r$ at every point of $\Lambda\cup E_0$.
\smallskip
\item $\Flux_Y=\Flux_X$.
\smallskip
\item $Y_j=X_j$ for all $j\in \{3,\ldots,n\}$.
\end{enumerate}
\end{lemma}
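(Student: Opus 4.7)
The plan is to hold the coordinates $X_3,\ldots,X_n$ frozen and modify only the first two Weierstrass components, mirroring the scheme used in the proof of Theorem \ref{th:MT}. Choose a nowhere-vanishing holomorphic $1$-form $\theta$ on $M$; after an initial reduction via Theorem \ref{th:Merg} (applied with sprays restricted to tangent directions of $\Agot^{n-1}_*$ that preserve the last $n-2$ coordinates, which is possible because those coordinates are already holomorphic data on $M\setminus E_0$), assume $X\in\CMI_\infty(S|E_0,\R^n)$ on a neighborhood of $S$. Write $u=\Xi\circ(2\di X/\theta)\in\Ocal_\infty(S|E_0,\Sgot^{n-1}_*)$. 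The hypothesis that each $X_j$ $(j\ge 3)$ is harmonic on $M\setminus E_0$ yields $u_j=2\di X_j/\theta\in\Ocal_\infty(M|E_0)$, so that
\[
  \phi:=\sum_{j=3}^n u_j^2\in\Ocal_\infty(M|E_0),
\]
and the identity $u_1u_2=\phi$ holds on $S\setminus E_0$. The assumption $\di X_1^2+\di X_2^2\not\equiv 0$ on $\Gamma$ translates, via the biholomorphism $\Xi$, into $\phi\ne 0$ on $\Gamma$.

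Next I will apply Lemma \ref{le:spray} with $\sgot=(s_1,s_2,1)\in\Scal_2(\d)$ chosen so that $s_1s_2\equiv 1$ (e.g.\ $s_1(z)=1+z$ and $s_2(z)=1/(1+z)$). This preserves both the product $u_1u_2=\phi$ and the components $u_3,\ldots,u_n$, while by property (B4) the map $\Pcal^{1,2}\circ\Psi_\sgot$ is a submersion at the origin onto a neighborhood of $0$ in $(\c^2)^l$. Paralleling the proof of Theorem \ref{th:MT}, I will then use Proposition \ref{pro:Royden0} (or its open-surface variant, via Florack's Weierstrass theorem \cite{Florack1948}) to approximate $u_1$ on $S$ by a meromorphic function $\hat u_1$ on a neighborhood $W'$ of $W$ in $M$ with (i) $\hat u_1-u_1$ vanishing to order $r$ on $\Lambda\cup E_0$; (ii) pole divisor at $E_0$ equal to $[u_1]_\infty$; and (iii) zero divisor of $\hat u_1$ on $W'\setminus E_0$ dominated by $[\phi]_0$. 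Condition (iii) guarantees $\hat u_2:=\phi/\hat u_1\in\Ocal_\infty(W'|E_0)$ and $\hat u:=(\hat u_1,\hat u_2,u_3,\ldots,u_n)\in\Ocal_\infty(W'|E_0,\Sgot^{n-1}_*)$. Simultaneously approximating the spray data $h_{i,j}$ by $\hat h_{i,j}\in\Ocal_\infty(W'|E_0)$ vanishing on the zero set of $\phi$ off $S$ and setting
\[
  \hat\Psi_\sgot(\zeta)=\bigl(s_1(\zeta\cdot\hat h)\hat u_1,\,s_2(\zeta\cdot\hat h)\hat u_2,\,u_3,\ldots,u_n\bigr),
\]
the period-domination persists under the Cauchy estimates, so the implicit function theorem produces $\zeta_0$ near $0$ with $\Pcal^{1,2}(\hat\Psi_\sgot(\zeta_0))=\Pcal^{1,2}(u)$. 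Integrating $\Xi^{-1}\circ\hat\Psi_\sgot(\zeta_0)\,\theta$ and declaring $Y_j:=X_j$ for $j\ge 3$ then yields $Y\in\CMI_\infty(W|E_0,\R^n)$ satisfying (ii)--(v), while fullness (i) follows from the fullness of $X|_{K_0}$ and the smallness of the perturbation.

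The main obstacle is enforcing condition (iii) outside $S$. On $S$ itself, a close enough approximation $\hat u_1|_S$ has, by Hurwitz's theorem, exactly the same zero divisor as $u_1|_S$, which is dominated by $[\phi|_S]_0$ via $u_1u_2=\phi$; the delicate point is preventing $\hat u_1$ from acquiring spurious zeros on $W'\setminus S$. Proposition \ref{pro:Royden0} on its own only guarantees that those extra zeros form a square divisor $D_0^2$, which need not lie inside $[\phi]_0$. To force this, I will use Florack's Weierstrass theorem on $M\setminus E_0$ to fix $g\in\Ocal(M\setminus E_0)$ with $[g|_{W'}]=[\phi|_{W'}]_0$, consider the quotient $u_1/g$ (which has only poles and no zeros on $S\setminus E_0$ since $[u_1|_S]_0\le[\phi|_S]_0=[g|_S]_0$), approximate $u_1/g$ on $W'$ by a meromorphic function sharing this zero-free property (combining the Oka property of $\c^*$ with a Runge--Mittag-Leffler approximation with jet interpolation on $\Lambda\cup E_0$), and then set $\hat u_1:=g\cdot v$ for the resulting approximation $v$. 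This systematic divisor bookkeeping, closely paralleling the master-divisor argument in Theorem \ref{th:MT}, should complete the construction.
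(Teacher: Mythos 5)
Your overall plan matches the paper's: freeze the coordinates $3,\ldots,n$; replace $u_1$ by a global meromorphic $\hat u_1$ whose divisor is controlled; define $\hat u_2=\phi/\hat u_1$; wrap the result in the multiplicative $\Scal_2$-spray with $s_1\,s_2\equiv 1$ (the paper also takes $\sgot=\big((1+z),(1+z)^{-1},1\big)$) and dominate only $\Pcal^{1,2}$; integrate. But the divisor bookkeeping in your final paragraph has a genuine gap, and it is precisely the step that the lemma lives or dies on.

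You set $g$ with $[g|_{W'}]=[\phi|_{W'}]_0$ and consider $u_1/g$. You correctly note that $[u_1|_S]_0\le[\phi|_S]_0$ (since $u_1 u_2=\phi$ with both holomorphic on $S\setminus E_0$), but this only makes $u_1/g$ \emph{zero-free}, not \emph{pole-free}, on $S\setminus E_0$: wherever the zero order of $\phi$ strictly exceeds that of $u_1$, the quotient $u_1/g$ has a pole. So $u_1/g$ is a map into $\C\P^1\setminus\{0\}$, not into $\C^*$, and the Oka/Runge argument for $\C^*$-valued maps does not apply to it directly. If you instead pass to $g/u_1\in\Ocal(S)$ and approximate it by $w\in\Ocal(W')$, you have no control on the zero set of $w$ away from $S$, so $\hat u_1 = g/w$ acquires spurious poles on $W'\setminus S$, contradicting the requirement $\hat u_1\in\Ocal_\infty(W'|E_0)$; and if you try to keep poles in $v\approx u_1/g$, then $gv$ has poles off $E_0$ wherever $v$'s poles are not absorbed by zeros of $g$. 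Proposition \ref{pro:Royden0} does not rescue this either — besides being stated for a compact surface minus a finite set $E$ (here $M$ is an arbitrary open Riemann surface), its condition (ii) only gives that the free zeros form a square divisor $D_0^2$, not that they lie under $[\phi]_0$.

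The clean fix — which is what the paper does — is to use the divisor $[u_1]$ rather than $[\phi]_0$. By the Weierstrass theorem on the open surface $M\setminus E_0$ (Florack), choose $\varphi\in\Ocal_\infty(M|E_0)$ with $[\varphi]=[u_1]\in\Div(K)$ and no zeros or poles off $K$. Then $u_1/\varphi$ is a genuine $\C^*$-valued map on a neighborhood of $K$ and on all of $S$ (no poles, no zeros: on $K$ the divisors cancel, and on $\Gamma$ both $u_1$ and $\varphi$ are zero- and pole-free, using that $u_1 u_2=\phi$ vanishes nowhere there). Approximating $u_1/\varphi$ by $v_1\in\Ocal(M,\C\setminus\{0\})$ with jet interpolation and putting $\hat u_1:=v_1\varphi$ automatically gives $[\hat u_1]=[u_1]$ on $K$ and $\hat u_1$ zero- and pole-free on $M\setminus K$, hence $\hat u_2=\phi/\hat u_1\in\Ocal_\infty(M|E_0)$ with $[\hat u_2]=[u_2]$, and the rest of your argument (spray, period domination of $\Pcal^{1,2}$, integration) goes through. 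Your initial appeal to a modified Theorem \ref{th:Merg} ``preserving the last $n-2$ coordinates'' is unnecessary since the hypotheses already give $X\in\CMI_\infty(K|E_0,\R^n)$, and it is in any case unsubstantiated as stated because the complete vector fields used there span the full tangent space of $\Agot^{n-1}_*$ and move all coordinates.
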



\begin{proof}
Fix a point $p_0\in \mathring K_0$ and take a skeleton $\{C_1,\ldots,C_l\}$, $l=m+\dim H_1(K_0,\z)$, of $K_0$ based at $(p_0,E_0\cup \Lambda)$ such that $C_i\cap C_j=\{p_0\}$ for all $i\neq j\in \{1,\ldots,l\}$ and $C=\bigcup_{j=1}^l C_j\subset \mathring K_0$  is a strong deformation retract of $K_0$, and hence of $S$ and $M$; see Definition \ref{de:skeleton} and recall that $K_0$ is a connected compact domain that is a strong deformation retract of $M$. 

 Let $\theta$ be a holomorphic $1$-form on $M$ vanishing nowhere. Write $u=\Xi(2\partial X/\theta)=(u_1,\ldots,u_n)\in \Ocal_\infty(K|E_0,\Sgot^{n-1}_*)\cap \Acal_\infty(S|E_0,\Sgot^{n-1}_*)$, where $\Xi$ is the map \eqref{eq:Thetabiho}. Notice that $u_j\in \Ocal_\infty(M|E_0)$, $j=3,\ldots,n$, and 
 \begin{equation}\label{eq:u1-u2}
 \text{$u_1, u_2$ \quad vanish nowhere on $\Gamma$; }
 \end{equation}
 recall that  $\partial X_1^2+\partial X_2^2$ vanishes nowhere on $\Gamma$. Consider the divisors on $\mathring K_0\subset M$
\begin{equation}\label{eq:Delta-ML}
	\Delta=\prod_{p\in E_0\cup \Lambda} p^r \text{\quad and \quad} \Delta_0= \Delta \big(\prod_{i=1}^n [u_i]_\infty\big),
\end{equation}
where $r$ is the number given in the statement of Lemma \ref{lem:coorfij}.  
 
Let $\epsilon_1>0$ to be specified later. 

The classical Weierstrass theorem \cite{Florack1948} gives $\varphi\in \Ocal_\infty(M|E_0)$ with $[\varphi]= [u_1]\in \Div(K)$, and hence vanishing nowhere on $M\setminus \supp ([u_1]_0)$; take into account \eqref{eq:u1-u2}. Since $u_1/\varphi\colon S\to\cp^1$ assumes values in $\c\setminus \{0\}$, which is an Oka manifold, there is $v_1\in \Ocal(M,\c\setminus \{0\})$ such that $ v_1\approx u_1/\varphi$ on $S$ and  $[v_1-u_1/\varphi]\geq  [u_1]_\infty  \Delta Z\in \Div(K)$,
where 
\[ 
	Z=\prod_{i=1}^n [u_i]^2_0 [u_i ]^2_\infty;
\] 
 see Forstneri\v c \cite[Theorems 3.8.1 and 5.4.4]{Forstneric2017}. Setting $\hat u_1:= v_1\varphi\in \Ocal_\infty(M|E_0)$  and choosing the approximation   close enough, then
\begin{equation}\label{eq:u1sust}
 \text{$\|\hat u_1-u_1\|_{S}<\epsilon_1$,\quad $[\hat u_1 -u_1 ]\geq  \Delta Z$,\quad and \quad$[\hat u_1]=[u_1]$}.
\end{equation} 
In particular, $\hat u_1$ vanishes nowhere on $M\setminus K$.
Since  $[\hat u_1]=[u_1]$, we have that $\hat u_2:=(\sum_{j=3}^n u_j^2)/\hat u_1$ lies in $\Ocal_\infty(M|E_0)$ and $[\hat u_2]=[u_2]$; it turns out that 
\[
 \hat u=(\hat u_1,\hat u_2, u_3,\ldots,u_n)\in \Ocal_\infty(M|E_0,\Sgot^{n-1}_*).
\]
 It follows that
\begin{equation}\label{eq:hatufinal}
\|\hat u-u\|_{S}< c_1 \epsilon_1 \quad \text{and}\quad \hat u_j-u_j\in \Ocal_{\Delta}(S)\quad \text{for all $j=1,2$,}
\end{equation}
where $c_1>0$ is a constant depending on $u$; use \eqref{eq:u1sust}.

Call $h=\big(( h_{i,j})_{i=1,\ldots,n}\big)_{j=1,\ldots,l} \in \big(\Ocal_{\Delta_0}(K_0)^n\big)^l$ 
  the map   given by  
 Lemma \ref{le:spray} applied to the data $M$, $K_0$,  the full map $u|_{K_0}\in \Ocal_\infty(K_0| E_0,\Sgot^{n-1}_*)$ and the $1$-form $\theta$. Choose a map
\[
	\sgot=\big((1+z),(1+z)^{-1},1\big)\in \Scal_2(\d)
\] 
(see Definition \ref{de:spindata}),
a number $\rho\in(0,1/ \| h\|_{K_0})$, and consider the associated multiplicative spray $ \Psi_\sgot\colon\rho\overline\b_{n,l} \to \Ocal_\Delta(K_0, u)$   in Lemma \ref{le:spray}-{\rm ({B}1)} (see   \eqref{eq:Omegaeta}); i.e,
\[
	\Psi_\sgot (\zeta)=\left((1+\zeta\cdot   h ) u_1, (1+\zeta\cdot   h )^{-1}  u_2,   (u_i)_{i=3,\ldots,n} \right).
\]
Here $\Delta$ and $\Delta_0$ are the divisors in \eqref{eq:Delta-ML} (cf.\ \eqref{eq:Delta*}). Fix $\epsilon_0>0$ and assume that $\rho$ is so small that Lemma \ref{le:spray}-{\rm ({B}2)},{\rm ({B}4)} are satisfied. By Theorem \ref{th:BSR}, we may assume that 
 \begin{equation}\label{eq:hlemafijo}
 	h\in \big(\Ocal_{\Delta_0}(M)^n\big)^l.
\end{equation}

Assume that $\rho>0$ is so small that $\rho \| h\|_{W}\le \rho \| h\|_{K_0}<1$, and consider the spray 
   $\hat \Psi_\sgot\colon\rho\overline\b_{n,l} \to \Ocal_\infty(W|E_0, \Sgot^{n-1}_*)$ given by
\begin{equation}\label{eq:fijarcoor}
  \hat \Psi_\sgot (\zeta)=\left((1+\zeta\cdot   h ) \hat u_1, (1+\zeta\cdot   h )^{-1} \hat u_2,   (u_i)_{i=3,\ldots,n} \right).
\end{equation} 
(Observe that the $n-2$ last components of $\hat \Psi_\sgot (\zeta)$ are those of $\Psi_\sgot (\zeta)$; here $W$ is the domain given in the statement of the lemma.)
 Note that, by  \eqref{eq:hatufinal} and \eqref{eq:hlemafijo}, 
\begin{equation}\label{eq:psivalues}
\text{$\hat \Psi_\sgot$ assumes values in $\Ocal_\Delta(K_0,  u)$}
\end{equation} 
 as well.
If  $\rho>0$ and  $\epsilon_1>0$   are chosen small enough,
 Lemma \ref{le:spray} and the Cauchy estimates  ensure that
\begin{enumerate}[\rm (a)]
\item $\hat\Psi_\sgot(\zeta)\colon W\setminus E_0 \to \Sgot_*^{n-1}\subset\c^n$ is full and  $\|\hat\Psi_\sgot(\zeta)-  u\|_{S}<  \epsilon_0$ for all   $ \zeta \in \rho\overline\b_{n,l}$; use Lemma \ref{le:spray}-{\rm ({B}2)}.
\smallskip
\item   $\Pcal^{1,2} \circ \hat\Psi_\sgot \colon \rho\overline\b_{n,l}  \to (\c^2)^l$   is a submersion  at $\zeta=0$ and $\Pcal^{1,2}  \big( \hat \Psi_\sgot(\zeta_0)\big)=0$ for some  $\zeta_0\in \rho \b_{n,l}$; see   \eqref{eq:Pcal-map*} and use Lemma \ref{le:spray}-{\rm ({B}4)}.
\end{enumerate}
Set $\hat f:=\Xi^{-1}(\hat \Psi_\sgot(\zeta_0))\in \Ocal_\infty(W|E_0,\Agot^{n-1}_*)$.

 Since  $S$ is path connected, $K_0\subset S$ is a strong deformation retract of $M$, and $\{C_1,\ldots,C_l\}$ is an skeleton of $K_0$ based at $(p_0,E_0\cup \Lambda)$, the definition of $\hat\Psi_\sgot$ in \eqref{eq:fijarcoor} and conditions {\rm (a)},   \eqref{eq:psivalues}, and {\rm (b)}  ensure that  
\[
Y\colon W\setminus E_0\to \r^n,\quad Y(p)=\Re\int_{p_0}^p \hat f \theta
\]
is a well defined, full complete conformal minimal immersion of FTC such that $Y_j=X_j$ for all $j=3,\ldots,n$, $\Flux_Y=\Flux_X$, and
$Y-X$ vanishes at least to order $r$ at every point of $\Lambda\cup E_0$. Finally, {\rm (a)} and the compactness of $S$ ensure that  $\|Y-X\|_{S}<\epsilon$ provided that $\epsilon_0$ is chosen small enough. This concludes the proof.
\end{proof}

\begin{proof}[Proof of Theorem \ref{th:M-Lgen}]
Since $U$ is locally connected, every compact set in $M$ intersects at most finitely many components of $U$. Therefore, there is a sequence of connected, smoothly bounded, Runge compact domains 

\begin{equation}\label{eq:M0M1}
	M_0\Subset M_1\Subset M_2\Subset \cdots\subset \bigcup_{j\in\z_+}M_j=M
\end{equation}
such that $M_0$ is a disc, $U\cap M_0=\varnothing$, and $U\cap bM_j=\varnothing$ for all $j\ge 1$.  Fix $p_0\in\mathring M_0$.

Take  $\epsilon>0$. Fix a nowhere vanishing holomorphic $1$-form $\theta$ on $M$. Also choose a full conformal minimal immersion $X_0\colon M_0\to\r^n$ and a number 
\begin{equation}\label{eq:epsilon0-ML}
	\epsilon_0\in \big(0,\frac12\min\left\{\epsilon,\delta_0\right\}\big),\quad \text{where $\delta_0= \min \{ |\partial X_0/\theta|(p)\colon p\in M_0\}>0$.}
\end{equation}
Furthermore, we choose $\epsilon_0>0$ so small that every conformal minimal immersion $Z\colon M_0\to\r^n$ with $\|Z-X_0\|_{M_0}<2\epsilon_0$ is full. 
We shall inductively construct a sequence of numbers $\epsilon_j>0$ and full immersions $X_j\in \CMI_\infty(M_j|A\cap M_j,\r^n)$ satisfying the following conditions for all $j\ge 1$.
\begin{enumerate}[\rm (1$_j$)]
\item $\max\big\{\|X_j-X_{j-1}\|_{M_{j-1}} \,,\, \|(\partial X_j-\partial X_{j-1})/\theta\|_{M_{j-1}}\big\}<\epsilon_{j-1}$.
\smallskip
\item $\|X_j-X\|_{U\cap M_j\setminus M_{j-1}}<\epsilon_{j-1}$.
\smallskip
\item $X_j-X$ extends harmonically to $\mathring U\cap M_j$.
\smallskip
\item $X_j-X$ vanishes at least to order $r(p)$ at each point $p\in (A\cup\Lambda)\cap M_j$.
\smallskip
\item $\Flux_{X_j}=\pgot|_{H_1(M_j\setminus A,\z)}$.
\smallskip
\item $\dist_{X_j}(p_0,bM_i)>i$ for all $i\in\{0,\ldots,j\}$.
\smallskip
\item $\epsilon_j<\frac12\min\{\epsilon_{j-1},\delta_j\}$, where $\delta_j=\min\big\{ |\partial X_j/\theta|(p)\colon p\in M_j\setminus A\big\}>0$.
\end{enumerate}

Assume that such a sequence exists. By properties \eqref{eq:M0M1}, {\rm (1$_j$)}, and {\rm (7$_j$)}, there is a limit map
\[
	Y=\lim_{j\to\infty} X_j\colon M\setminus A\to\r^n
\]
that is a conformal harmonic map satisfying
\begin{equation}\label{eq:Y-Xj-ML}
	\max\left\{\|Y-X_j\|_{M_j} \,,\,  \Big\|\frac{\partial Y-\partial X_j}{\theta}\Big\|_{M_j}\right\}<2\epsilon_j<\delta_j\quad \text{for all $j\ge1$}.
\end{equation}
In particular, $Y\colon M\setminus A\to\r^n$ is a full conformal minimal immersion. By \eqref{eq:epsilon0-ML}, {\rm (2$_j$)}, and \eqref{eq:Y-Xj-ML}, we have that $\|Y-X\|_{U\cap M_j\setminus M_{j-1}}<2\epsilon_{j-1}\le 2\epsilon_0<\epsilon$ for all $j\ge 1$. Since $U\cap M_0=\varnothing$, this and \eqref{eq:M0M1} imply condition {\rm (iv)}. It is clear that {\rm (3$_j$)}, {\rm (4$_j$)}, and {\rm (5$_j$)} ensure {\rm (i)}, {\rm (ii)}, and {\rm (iii)}.  Finally, properties {\rm (ii)} and {\rm (6$_j$)} and the completeness of $X$, guarantee that $Y$ is complete. Thus, $Y$ satisfies the conclusion of the theorem.

Let us now explain the induction. The basis is given by the already fixed number $\epsilon_0>0$ and conformal minimal immersion $X_0\colon M_0\to\r^n$. Note that, since $A\cup\Lambda\subset U$ and $U\cap M_0=\varnothing$, we obviously have that $X_0\in\CMI_\infty(M_0|A\cap M_0,\r^n)$ (see \eqref{eq:CCMI}) and conditions {\rm (3$_0$)} and {\rm (4$_0$)} are satisfied. Moreover, since $M_0$ is simply connected and $p_0\in\mathring M_0$, conditions {\rm (5$_0$)} and {\rm (6$_0$)} hold true as well. Finally, conditions {\rm (1$_0$)}, {\rm (2$_0$)}, and {\rm (7$_0$)} are void. For the inductive step, assume that for some $j\in\n$ we have numbers $\epsilon_0,\ldots,\epsilon_{j-1}$ and full immersions $X_0,\ldots,X_{j-1}$ satisfying the required conditions for all $i\in\{0,\ldots,j-1\}$, and let us provide $\epsilon_j$ and $X_j$. 

Choose a connected, smoothly bounded, Runge compact domain $M_j'$ in $M$ such that $M_j\subset M_j'$ and $M_j$ is a strong deformation retract of $M_j'$. Let $\Sigma$ be a compact Riemann surface (without boundary) such that $M_j'$ is a smoothly bounded compact domain in $\Sigma$, and let $E\subset \Sigma\setminus M_j'$ be a finite set such that $M_j'$ is Runge in $\Sigma\setminus E$. Recall that $U\cap (bM_{j-1}\cup bM_j)=\varnothing$ and note that $M_{j-1}\cup (U\cap M_j)$ is Runge and admissible in $\Sigma\setminus E$. By Theorem \ref{th:MT}, there is a full complete conformal minimal immersion $Y_j\colon \Sigma\setminus (E\cup (A\cap M_j))\to\r^n$ of finite total curvature such that $Y_j|_{M_j}\in \CMI_\infty(M_j|A\cap M_j,\r^n)$, satisfies conditions {\rm (1$_j$)}--{\rm (5$_j$)}, and satisfies condition {\rm (6$_j$)} for all indices $i\in\{0,\ldots,j-1\}$, but need not satisfy $\dist_{Y_j}(p_0,bM_j)>j$. We now perturb $Y_j$ near $bM_j$ in order to ensure that inequality.
For, choose a connected, smoothly bounded, Runge compact domain $M_j''$ in $M$ such that $M_{j-1}\cup (U\cap M_j)\Subset M_j''\Subset M_j$ and $M_j''$ is a strong deformation retract of $M_j$. Write $Y_j=(Y_{j,1},\ldots,Y_{j,n})$. Since $Y_j$ is full, we have that $Y_{j,n}$ is nonconstant. Choose a Runge compact set $K\subset \mathring M_j\setminus M_j''$ in $M$ such that $K$ is a finite union of smoothly bounded compact discs and
\begin{equation}\label{eq:JX}
	\int_\gamma |\partial Y_{j,n}|>1
\end{equation}
for all paths $\gamma\colon [0,1]\to M\setminus K$ with   $\gamma(0)\in M_j''$ and $\gamma(1)\in M\setminus \mathring M_j$. Existence of such a set is well known; we refer e.g.\ to \cite{JorgeXavier1980AM,AlarconFernandezLopez2013CVPDE,AlarconCastro-Infantes2019APDE}. Fix a number $T>0$ so large that
\begin{equation}\label{eq:JX2}
	\min\{|Y_{j,1}(p)+T|\colon p\in K\}>\|Y_{j,1}\|_{bM_j''}+2.
\end{equation} 
Let $\Gamma$ be a finite family of pairwise disjoint smooth Jordan arcs in $\mathring M_j$ such that $S=(M_j''\cup K)\cup \Gamma$ is a connected admissible Runge subset of $M$ that is a strong deformation retract of $M_j$ and  $\partial Y_{j,1}^2+\partial Y_{j,2}^2$ vanishes nowhere on $\Gamma\cap (M_j''\cup K)$.  Consider any  immersion
\[
Y_j'=(Y_{j,1}',\ldots,Y_{j,n}') \in\GCMI_\infty(S| A\cap M_j,\r^n)\cap \CMI_\infty(M_j''\cup K| A\cap M_j,\r^n)
 \]
 such that $Y_j'=Y_j$ on $M_j''$ and $Y_j'=Y_j+(T,0,\ldots,0)$ on $K$; it turns out that $(\partial Y'_{j,1})^2+(\partial Y'_{j,2})^2$ vanishes nowhere on $\Gamma$. Since the immersion $Y_j|_{M_j}$ satisfies conditions {\rm (1$_j$)}--{\rm (5$_j$)} and condition {\rm (6$_j$)} for all the indices $i\in\{0,\ldots,j-1\}$, Lemma \ref{lem:coorfij} furnishes us for any small enough $\epsilon'>0$ with a full immersion $X_j=(X_{j,1},\ldots,X_{j,n})\in \CMI_\infty(M_j|A\cap M_j,\r^n)$ satisfying the same conditions and, in addition,
\begin{enumerate}[\rm (a)]
\item $X_{j,n}=Y_{j,n}$ and
\smallskip
\item $\|X_{j,1}-Y_{j,1}'\|_{M_j''\cup K}<\epsilon'$.
\end{enumerate}
Let us see that $X_j$ satisfies {\rm (6$_j$)}. For, since $\dist_{X_j}(p_0,bM_{j-1})>j-1$ and $p_0\in \mathring M_{j-1}\Subset \mathring M_j''$, it suffices to check that $\int_\gamma |\partial X_j|>1$ for all paths $\gamma\colon[0,1]\to M_j\setminus\mathring M_j''$ with $\gamma(0)\in bM_j''$ and $\gamma(1)\in bM_j$; recall that $2|\partial X_j|^2$ is the metric induced on $M_j$ by the Euclidean metric in $\r^n$ via the immersion $X_j$ (see \eqref{eq:ds2}). Let $\gamma$ be such a path. If $\gamma([0,1])\cap K=\varnothing$, then 
\[
	\int_\gamma |\partial X_j|\ge \int_\gamma |\partial X_{j,n}|\stackrel{\rm (a)}{=} \int_\gamma |\partial Y_{j,n}|\stackrel{\eqref{eq:JX}}{>}1.
\]
If, on the contrary, $\gamma([0,1])\cap K\neq \varnothing$, then for any point $p\in \gamma([0,1])\cap K$ we have
\begin{eqnarray*}
	\int_\gamma |\partial X_j| &\ge & |X_j(p)-X_j(\gamma(0))|
	\\
	& \ge & |X_{j,1}(p)|-|X_{j,1}(\gamma(0))|
	\\
	& \stackrel{\rm (b)}{>} & |Y_{j,1}(p)+T|-|Y_{j,1}(\gamma(0))|-2\epsilon' \; \stackrel{\eqref{eq:JX2}}{>} \; 2-2\epsilon'>1,
\end{eqnarray*}
where for the last inequality we assume that $\epsilon'<1/2$. This shows {\rm (6$_j$)}.

Finally, choose any $\epsilon_j>0$ so small that {\rm (7$_j$)} is satisfied for this $X_j$. This ensures the inductive step and completes the proof of the theorem.  
\end{proof}
%
%
\begin{remark}\label{rem:C1}
The approximations in Theorems \ref{th:Merg}, \ref{th:MT}, and \ref{th:M-Lgen} take place in the natural $\Ccal^1$ topology for (generalized) conformal minimal immersions, despite it is not mentioned in their statements. Indeed, just observe that convergence of the Weierstrass data is ensured in the proofs. Furthermore, in view of the recent result by Forn\ae ss, Forstneri\v c, and Wold \cite[Theorem 16]{FornaessForstnericWold2018} on Mergelyan approximation in the $\Ccal^r$ topology on admissible sets, it seems that the results in this paper can be extended by guaranteeing approximation of this class.
\end{remark}


\subsection*{Acknowledgements}
The authors were partially supported by the State Research Agency (SRA) and European Regional Development Fund (ERDF) via the grant no.\ MTM2017-89677-P, MICINN, Spain, and the Junta de Andaluc\'ia projects no. P18-FR-4049 and A-FQM-139-UGR18 (FEDER). 

We are grateful to the referee for the careful reading of the paper and the valuable comments that improved the presentation.




\medskip
\noindent Antonio Alarc\'{o}n

\noindent Departamento de Geometr\'{\i}a y Topolog\'{\i}a e Instituto de Matem\'aticas (IEMath-GR), Universidad de Granada, Campus de Fuentenueva s/n, E--18071 Granada, Spain.

\noindent  e-mail: {\tt alarcon@ugr.es}

\bigskip

\noindent Francisco J. L\'opez

\noindent Departamento de Geometr\'{\i}a y Topolog\'{\i}a e Instituto de Matem\'aticas (IEMath-GR), Universidad de Granada, Campus de Fuentenueva s/n, E--18071 Granada, Spain.

\noindent  e-mail: {\tt fjlopez@ugr.es}

\end{document}